\documentclass{article}

\usepackage{amsmath}
\usepackage{amscd}
\usepackage{amssymb}
\usepackage{theorem}
\usepackage{xypic}
\usepackage[all,v2]{xy}
\xyoption{2cell}
\xyoption{color}
\UseAllTwocells
\usepackage{enumitem}
\usepackage{rotating}
\usepackage{mathrsfs}
\usepackage{stmaryrd}
\usepackage{mathdots}
\usepackage{hyperref}
\usepackage{caption}
\usepackage[dvipsnames]{xcolor}
\usepackage{breqn}
\usepackage[bbgreekl]{mathbbol}
\usepackage{bbm}

\usepackage[bbgreekl]{mathbbol}
\usepackage{tikz-cd}

\newenvironment{items}
{\begin{enumerate}[topsep=3pt, itemsep=3pt, parsep=0pt, label=(\roman*)]}
{\end{enumerate}}

\newcommand{\smat}[1]{{\bigl(\begin{smallmatrix}#1\end{smallmatrix}\bigr)}}

\renewcommand{\tilde}{\widetilde}

\newcommand{\sep}{{\mbox{\tiny sep}}}

\newcommand{\taut}{{\mbox{\tiny taut}}}

\newcommand{\zz}{{\mathbb Z}}

\newcommand{\aaa}{{\mathbb A}}

\newcommand{\nn}{{\mathbb N}}
\newcommand{\nno}{{\mathbb N}_0}

\newcommand{\qq}{{\mathbb Q}}
\newcommand{\pp}{{\mathbb P}}
\newcommand{\cc}{{\mathbb C}}

\renewcommand{\O}{{\mathscr O}}

\newcommand{\tot}{\mathop{\rm tot}\nolimits}

\newcommand{\Bl}{\mathop{\rm Bl}\nolimits}

\newcommand{\epi}{\twoheadrightarrow}

\newcommand{\Isom}{\mathop{\rm Isom}\nolimits}

\newcommand{\GL}{\mathop{\rm GL}\nolimits}

\newcommand{\injectlim}{\mathop{\lim\limits_{\textstyle\longrightarrow}}\limits}
\newcommand{\projectlim}{\mathop{{\lim\limits_{\textstyle\longleftarrow}}}\limits}

\newcommand{\ol}{\overline}
\newcommand{\ul}{\underline}

\newcommand{\longiso}{\stackrel{\textstyle\sim}{\longrightarrow}}

\newcommand{\doublearrowstack}[2]%
                      {{{{\scriptstyle#1}\atop{\textstyle\longrightarrow}}\atop{{\textstyle\longrightarrow}\atop{\scriptstyle#2}}}}
\newcommand{\rightleftarrowstack}[2]%
                      {{{{\scriptstyle#1}\atop{\textstyle\longrightarrow}}\atop{{\textstyle\longleftarrow}\atop{\scriptstyle#2}}}}
\newcommand{\leftrightarrowstack}[2]%
                      {{{{\scriptstyle#1}\atop{\textstyle\longleftarrow}}\atop{{\textstyle\longrightarrow}\atop{\scriptstyle#2}}}}

\newcommand{\sfrac}[2]{{\textstyle\frac{#1}{#2}}}

\newtheorem{thm}{Theorem}[section]
\newtheorem{cor}[thm]{Corollary}
\newtheorem{lem}[thm]{Lemma}
\newtheorem{prop}[thm]{Proposition}

\theorembodyfont{\upshape}
\newtheorem{defn}[thm]{Definition}

\newtheorem{rmk}[thm]{Remark}

\newtheorem{ex}[thm]{Example}

\newenvironment{pf}{\begin{trivlist}\item[]{\sc Proof.}}%
            {\nolinebreak $\Box$ \end{trivlist}}

            {\nolinebreak $\Box$ \end{trivlist}}

\newcommand{\vir}{{\rm vir}}

\newcommand{\Gm}{{{\mathbb G}_m}}

\DeclareMathOperator\id{id}

\newcommand{\ch}{\mathop{\rm ch}\nolimits}

\newcommand{\Hilb}{\mathop{\rm Hilb}\nolimits}
\newcommand{\Sym}{\mathop{\rm Sym}\nolimits}

\newcommand{\noprint}[1]{}

\newcommand{\comment}[1]{$\mbox{}^{\spadesuit}${\marginpar{\footnotesize$\spadesuit$ #1}}}

\def\comment{\noprint}

\author{Kai Behrend}
\title{The Pardon Algebra for zero-cycles}

\begin{document}
\sloppy

\maketitle

\begin{abstract} 
Recently, John Pardon proved the MNOP conjecture by introducing a new mathematical gadget, which we call the Pardon homology algebra of 1-cycles in 3-folds.  We work out an analogous construction for $0$-cycles in $d$-folds.  This gives a new point of view on enumerative problems involving point-counting, such as, for example, the degree zero MNOP conjecture. 
\end{abstract}

\tableofcontents

\section*{Introduction}

This is an exploration of  John Pardon's theory~\cite{pardon}, in a much simpler case: instead of considering 1-cycles, we   consider 0-cycles. Pardon used his theory to prove the MNOP Gromov-Witten-Donaldson-Thomas correspondence.  We arrive at a new point of view on point enumeration problems. 

\paragraph{Summary of main result.} 
Fix $d$. A family of $d$-folds parametrized by $S$ is a separated (not necessarily proper) smooth morphism of schemes\comment{does it really not matter if we work with schemes or DM stacks?} $X\to S$ of relative dimension $d$.  We denote by $\ol Z_n X=X^n_S/\!/\Sigma_n\to S$ the relative $n$-th symmetric power of $X\to S$, and let $\ol ZX=\coprod_{n}\ol Z_nX$.  The Pardon homology algebra is by definition
$$H_\ast =\injectlim_{X/S} H_\ast(\ol ZX,\qq)\,.$$
Following Pardon, we construct a product and a coproduct on $H_\ast$, and  prove that $H_\ast$ is a graded commutative and cocommutative Hopf algebra over $\qq$. We define a `tautological' Hopf subalgebra, which contains all classes one might conceivably be interested in, and we prove that 
$$H_\ast^\taut=\qq[q_{n\lambda}]\,,$$
where $n$ runs over all positive integers, and $\lambda$ over all integer partitions with at most $d$ parts, such that $\lambda_i\geq n-1$ for all parts $\lambda_i$ of $\lambda$.
Here, $q_{n\lambda}$ is a class in $H_\ast$,  indexed in the colimit by the family of $d$-folds ($e_i$ is the $i$-th standard basis vector of $\qq^n$)
$$X=\tot\Big(\O(e_1)\oplus\ldots\oplus \O(e_d)\Big)\longrightarrow \pp^{\lambda_1}\times\ldots\times \pp^{\lambda_d}=S\,.$$
The $n$-fold zero section of this family defines a section $\Delta$ of $\ol Z_n X\to S$, and $q_{n,\lambda}=\Delta_\ast[\pp^{\lambda_1}\times\ldots\times\pp^{\lambda_d}]$. 

The interesting part of $H_\ast^\taut$, in the sense that it contains all $[\ol Z_nX]$ for compact $X\to\ast$, is the part where the homological degree equals $2nd$.  This forms a subalgebra (not a Hopf subalgebra), and is equal to 
$$\qq[q_{n,\lambda+n-1}]\,,$$
where $n$ runs over all positive integers, and $\lambda$ over all partitions of $d$.    (By $\lambda+n-1$ we denote the partition obtained by adding $n-1$ to each of the $d$ parts of $\lambda$). 

This reduces the computation of the values of any multiplicative point counting theory $e$ on all proper algebraic $d$-folds to evaluating $e$ on the classes $q_{n,\lambda+n-1}$. 

(The direct analogy to Pardon's work for 1-cycles on 3-folds 
would be the subalgebra of homological degree $0$, which is simply $\qq[q_{n,0}]$, where $n$ runs over the positive integers. So this algebra is freely generated by the `equivariant multiple points'.)

\paragraph{Outline.}

If $X$ is an algebraic $d$-fold, the space of $0$-cycles of degree $n$ is the symmetric product which we denote by $\ol Z_n X$.  In order to avoid dealing with singular varieties, we prefer working with the quotient stack $Z_nX=X^n/\Sigma_n$, of which $\ol Z_n X$ is the coarse moduli space. In terms of (co-)homology with $\qq$-coefficients, this makes essentially no difference. The moduli problem solved by $Z_nX$ is that of morphisms $D\to S\times X$, where $D\to S$ is finite \'etale of degree $n$. Thus, we can treat the theory of $0$-cycles also as a theory of maps. When passing from a fixed target variety $X$ to a family $X\to S$, we consider diagrams $D\to X\to S$, where $X\to S$ is a family of $d$-folds and  $D\to S$ is finite \'etale. We denote the relative space of maps of degree $n$ by $Z_nX=X^n_S/\Sigma_n$. 

For the theory to be compatible with passing from $Z_nX$ to its coarse moduli space $\ol Z_nX$, we need to make sure $Z_nX$ has finite stabilizers.  This requires $X\to S$ to be separated. 
On the other hand, dropping the separatedness assumption on our target families $X\to S$ gives rise to a perfectly consistent theory, which we develop in the first part of this paper.  It is simpler than the separated theory, deformation to the normal bundle is allowed, and multiple points can be separated into sums of simple points. The tautological Pardon algebra gets smaller, and is now equal  to $\qq[q_{1\lambda}]$. 

In the second part, we briefly introduce the dual of the Pardon homology algebra, the Pardon cohomology algebra.  It contains the enumerative theories against which classes in $H_\ast$ can be evaluated to get enumerative invariants. 

The third part contains the main results announced above.  As an illustration, we finish with a proof of the degree zero MNOP conjecture.  (Our proof apparently works for proper Deligne-Mumford  stacks, and it does not use relative Donaldson-Thomas theory.) 

Many problems involving point counts in $d$-folds can be formulated (and solved) using these results.  We expect that the calculations this gives rise to are essentially equivalent to well-known approaches to these kinds of problems, as developed for example by Ellingsrud-G\"ottsche-Lehn~\cite{Ellingsrudetal2001}.

All geometric arguments are purely algebraic.  This distinguishes this work from Pardon's for 1-cycles, which is essentially analytic.  Therefore, what we describe here should work for an algebraic (co)-homology theory such as   Chow-Witt (co)homology theory, which  is   homotopy invariant, and should give values in the Witt-ring.  This will be explored elsewhere.

\paragraph{Conventions.}
 All schemes and stacks are over $\cc$, all homology and cohomology coefficients are $\qq$. Deligne-Mumford stacks have separated diagonal, if the inertia stack is finite over the stack, the Deligne-Mumford stack is said to have finite stabilizers. 

\subsection*{Acknowledgements}
I want to thank Jim Bryan and Felix Thimm for many discussions about this and related material.
 
\section{The non-separated Pardon Hopf algebra}

\subsection{Homology and Cohomology}

We work with Deligne-Mumford stacks over the complex numbers.  They have representable, finite type and unramified diagonal, but we do not assume separatedness, or finiteness of stabilizers.  Singular homology and cohomology with coefficients in $\qq$ are defined via double complexes associated to groupoid presentations as in \cite{trieste}.  In addition, we assume existence of Gysin wrong way maps
$$f^!:H_i(Y)\longrightarrow H_{i+2d}(X)\,,$$
and 
$$f_!:H^i(X)\longrightarrow H^{i-2d}(Y)\,,$$
for any (not necessarily representable) proper lci morphism $f:X\to Y$ of relative dimension $d$, as constructed in~\cite{khan}.  These Gysin maps are  functorial, and commute with homology pushforward and cohomology pullback. Moreover, they are adjoint:
$$\langle e,f^!\alpha\rangle=\langle f_!e,\alpha\rangle\,,$$
for a cohomology class $e$ and a homology class $\alpha$. Finally, if $f:X\to Y$ is finite \'etale, then $f_\ast f^!:H_\ast(Y)\to H_\ast(Y)$ and $f_!f^\ast:H^\ast(Y)\to H^\ast(Y)$ are multiplication by the degree of $f$. 

For example, every smooth and proper Deligne-Mumford stack $X$ of dimension $d$ has a fundamental class $[X]\in H_{2d}(X)$, defined by $[X]=a^![\ast]$, where $[\ast]\in H_0(\ast)$ is the canonical element, and $a:X\to\ast$ the structure morphism. Dually, there is an integration map $\int_{[X]}:H^{2d}(X)\to\qq=H^0(\ast)$, defined by $a_!$. 

For example, if the finite group $G$ acts on the algebraic space $X$, then 
$$[X/G]=a^![\ast]=\frac{1}{|G|}\pi_\ast \pi^!a^![\ast]=\frac{1}{|G|}\pi_\ast[X]\,,$$
where $\pi:X\to X/G$ is the quotient map, and $a:X/G\to\ast$ the structure map. 
In the case where $X=\ast$,   we can identify $\qq=H_0(\ast)=H_0(BG)$, and via this identification, we have $[BG]=1/|G|$.  

\paragraph{K\"unneth theorem}  We will need to use the K\"unneth theorem. There are natural inverse isomorphisms 
$$\begin{tikzcd}
H_\ast(X)\otimes H_\ast(Y)\ar[r,"G"]&H_\ast(X\times Y)\ar[l,shift left=1ex,"F"]
\end{tikzcd}$$

\subsection{Families and cycle spaces}\label{non-sep}

\begin{defn}\label{def:fam}
We fix an integer $d\geq0$, and define a {\bf family of $d$-folds} (or simply a {\bf family}), to be any smooth morphism\comment{Note that we do not require representable. Should review everything to make sure we don't have hidden representability assumptions.}
 $X\to S$ of Deligne-Mumford stacks of relative dimension $d$. A {\bf morphism }of families from $Y\to T$ to $X\to S$ is a pullback diagram 
\begin{equation}\label{eq:fam}
\begin{tikzcd}
Y\ar[r]\ar[d] & X\ar[d]\\
T\ar[r] & S\rlap{\,.}
\end{tikzcd}\end{equation}
\end{defn}

\begin{defn}\label{def:mod}
For a family $X\to S$, we define the associated {\bf relative moduli stack} or {\bf cycle space} to be
$$ZX=\coprod_{n\geq0}Z_nX=\coprod_{n\geq0} X^n_S/\Sigma_n\,.$$
We hope suppressing the base $S$ in the notation of the moduli space will not lead to confusion.  Here, $X^n_S$ is the $n$-fold fibred product of $X$ relative to $S$, and $\Sigma_n$ is the symmetric group acting by permutations.  The quotient is the stack quotient. Note that we have $Z_0X=S$ (even if $X$ is empty), and $Z_1X=X$. 
\end{defn}

Every morphism of families (\ref{eq:fam}) gives rise to a cartesian diagram of moduli stacks
\begin{equation}\label{diag:mod}
\begin{tikzcd}
ZY\ar[r]\ar[d] & ZX\ar[d]\\
T\ar[r] & S\rlap{\,.}
\end{tikzcd}
\end{equation}

\begin{rmk}[Modular interpretation in terms of maps]
Given a family $X\to S$ and a morphism $T\to S$, the groupoid $ZX(T)$ of sections of $ZX\to S$ over $T$ is equivalent to the groupoid of commutative diagrams
$$\begin{tikzcd}
D\ar[r]\ar[d]&  X\ar[d]\\
 T\ar[r] & S\end{tikzcd}$$
 where $D\to T$ is finite \'etale and representable. Thus $ZX$ classifies maps from unordered finite sets to $X$. This is the point of view which we will usually take.
\end{rmk}

The universal map of degree $n$ of $X\to S$ is given by
$$\begin{tikzcd}
X_S^n\times_{\Sigma_n}\ul{n}\ar[r]\ar[dr] & Z_nX\times_S X\ar[d]\ar[r] & X\ar[d]\\
& Z_n X\ar[r] & S\rlap{\,.}
\end{tikzcd}$$
Note also that, for the universal map we have
$$X_S^n\times_{\Sigma_n}\ul n=Z_{n-1}X\times _SX\,.$$

\begin{rmk}
We allow non-separated families $X\to S$, so $Z_nX$ will typically not have finite stabilizers, and not have a good moduli space.  Because of this, calling $ZX$ the cycle space is an abuse of language. 
\end{rmk}

\begin{rmk}
Technically, the quotient stack classifies $S_n$-torsors $P\to S$, together with an $S_n$-equivariant map $f:P\to X_S^n$.  The moduli problem we listed is equivalent.  To a pair $(P,f)$ we associate the \'etale cover $P\times_{S_n}\ul{n}$ with the map to $X$ given by $[p,i]\mapsto f(p)_i$.  Conversely, to the \'etale cover $Z\to S$ and map $g:Z\to X$ we associate the $S_n$-torsor $\Isom_S(\ul n,Z)$ with the equivariant map $\phi\mapsto (\phi(i))_i$. 
\end{rmk}

\begin{rmk}[Smoothness]
For every family $X\to S$, and every $n\geq0$, the morphism $Z_nX\to S$ is smooth, of relative dimension $nd$. 
\end{rmk}

\begin{rmk}[Disjoint union of target]\label{rmk:prod}
Note that if $X\to S$ and $Y\to T$ are families, then for the cycle spaces we have
$$Z(X_T\amalg Y_S)=ZX\times ZY\,.$$
Here, $X_T\amalg Y_S$ the family parametrized by $S\times T$ obtained by taking the disjoint union of the pullbacks of the two families to the common base $S\times T$. 
\end{rmk}

\begin{rmk}[Addition]\label{rmk:plus}
Disjoint union of the source defines for every family $X\to S$ a morphism
\begin{align*}
+:ZX\times_S ZX&\longrightarrow ZX\\
(D\to X,D'\to X)&\longmapsto D\amalg D'\to X
\end{align*}
This morphism is finite, representable and \'etale, of degree $2^n$ over $Z_nX$. It turns $ZX\to S$ into a commutative monoid object over $S$.   The unit is given by $Z_0X=S$. 
\end{rmk}

\begin{rmk}[Compatibility]\label{rmk:add}
The addition operation is compatible with disjoint unions.  The following diagram is cartesian. (The top row is split into two.)
$$\begin{tikzcd}
(ZX\times_S ZX)\times (ZY\times_T ZY)\ar[dd,"+\times +"'] \ar[r,"\sim"] & (ZX\times ZY)\times_{S\times T}(ZX\times ZY)\ar[r,"\sim"]&\mbox{}\\\phantom{mmmmmmmmmmmmmmm}\ar[r,"\sim"] 
& Z(X_T\amalg Y_S)\times_{S\times T}Z(X_T\amalg Y_S)\ar[d,"+"]\\
ZX\times ZY\ar[r,"\sim"]& Z(X_T\amalg Y_S)
\end{tikzcd}$$
For example, in the case $S=T=\ast$, this means that given a cycle   $D\to X$, such that, at the same time, $D=E_1+E_2$, and $X=Y_1\amalg Y_2$, there exist unique cycles $E_{11}\to Y_1$ and $E_{21}\to Y_1$, as well as $E_{12}\to Y_2$ and $E_{22}\to Y_2$, such that both $E_{11}+E_{21}=D_1$ and $E_{12}+E_{22}=D_2$, as well as $E_{11}\amalg E_{12}=E_1$ and $E_{21}\amalg E_{22}=E_2$.  Here $D=D_1\amalg D_2$ is the decomposition of $D$ induced by the decomposition $X=Y_1\amalg Y_2$ of $X$. 
$$\begin{tikzcd}[row sep=0ex,column sep=1ex]
E_{11} & \amalg & E_{12} & = & E_1\\
+ & &+&&+\\
E_{21} & \amalg & E_{22} & = & E_2\\
\shortparallel &&\shortparallel &&\shortparallel\\
D_1&\amalg& D_2 &=& D
\end{tikzcd}$$
\end{rmk}

\subsection{Pardon homology} 

Fix $d$ throughout.

\begin{defn}
The {\bf homology Pardon algebra }is defined as
$$H_\ast=\injectlim_{X/S} H_\ast(ZX)\,.$$
Here, the colimit is taken over the category of families of $d$-folds, as defined in Definition~\ref{def:fam}. The transition maps in the colimit are the homomorphisms in homology induced by (\ref{diag:mod}). 
\end{defn}

Thus, given any family $X\to S$, and any homology class $\alpha \in H_\ast(ZX)$, we get an induced element in $H_\ast$ which we will also denote by $\alpha$. The only relations we introduce in $H_\ast$ are that if $X\to S$ is the pullback of another family $Y\to T$, via the cartesian morphism $f:X\to Y$, then $\alpha\in H_\ast(ZX)$ and $f_\ast\alpha \in H_\ast(ZY)$ define the same element in Pardon homology $H_\ast$. 

\paragraph{Grading}
We note that $H_\ast$ is graded by the degree $n$.  The direct sum decomposition
$$H_\ast(ZX)=H_\ast(\coprod_{n\geq0} Z_nX)=\bigoplus_{n\geq0} H_\ast(Z_nX)$$
survives the colimit. Of course, $H_\ast$ has a second grading, the homological grading, so $H_\ast$ is, in fact, bigraded.  When referring to the latter, we will always use the term `homological grading', to distinguish it from the former.

\paragraph{Multiplication}
We define multiplication in $H_\ast$ on representatives as follows. Let $\alpha\in H_\ast(ZX)$, for a family $X\to S$, and $\beta\in H_\ast(ZY)$ for a family $Y\to T$.  Then $\alpha\cdot\beta$ is associated with the disjoint union family $X_T\amalg Y_S$ over $X\times T$.  In fact, 
$$\alpha\cdot\beta=\alpha\times\beta \in H_\ast(ZX\times ZY)=H_\ast (Z (X_T\amalg Y_S))\,,$$
is the external product of $\alpha$ and $\beta$, via the isomorphism observed in Remark~\ref{rmk:prod}. 
This operation is compatible with the relations defining $H_\ast$, and thus passes to $H_\ast$. 

It is straightforward to prove that this multiplication is bigraded,  associative, and graded commutative with respect to the homological grading. A unit is provided by the empty family $\varnothing\to\ast$ parametrized by the point. 

\paragraph{Comultiplication}
We define the  comultiplication $\Delta:H_\ast\to H_\ast\otimes H_\ast$ on $H_\ast(ZX)$, for a family $X\to S$, as the composition
$$H_\ast(ZX) \stackrel{+^!}{\longrightarrow}H_\ast(ZX\times_S ZX)
\stackrel{\Delta_\ast}{\longrightarrow}H_\ast(ZX\times ZX) \stackrel{F}{\longrightarrow}H_\ast(ZX)\otimes H_\ast(ZX)\,.$$
Here, $F$ is  the Alexander-Whitney map, giving rise to   K\"unneth components. 
This map is compatible with the colimit definition and hence passes to $H_\ast$. 
Note that the $\Delta_\ast$ part of this definition doesn't change the class in $H_\ast$. 

The comultiplication is bigraded,  coassociative and graded cocommutative with respect to the homological grading.  The counit is given by the composition
\begin{equation}\label{eq:counit}
H_\ast(ZX)\stackrel{0^!}{\longrightarrow } H_\ast(S)\stackrel{\deg}{\longrightarrow}\qq\,,
\end{equation}
on the family $X\to S$. 

\paragraph{Graded Hopf algebra}

\begin{prop}
Pardon homology $H_\ast$ is a bigraded bialgebra.  It is graded commutative and cocommutative with respect to the homological grading.   
\end{prop}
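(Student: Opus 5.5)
The plan is to verify each bialgebra axiom on representatives, reducing every identity to a base change or functoriality property of Gysin maps, together with the cartesian squares of Remarks~\ref{rmk:prod}, \ref{rmk:plus} and \ref{rmk:add}.

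First, well-definedness. If $f:X'\to X$ is a morphism of families over $g:S'\to S$, then $ZX'=ZX\times_S S'$ and $ZX'\times_{S'}ZX'=(ZX\times_SZX)\times_S S'$. Hence $+'$ is the base change of $+$. Since $+$ is finite \'etale (Remark~\ref{rmk:plus}), base change for Gysin maps gives $+^!f_\ast=f_\ast+'^!$. Naturality of $\Delta_\ast$ and of $F$ then shows that $\Delta$ passes to the colimit. The same argument works for the counit, using the zero section $0:S\to ZX$, which is the inclusion of the open and closed component $Z_0X=S$. Bigradedness is clear: $+$ maps $Z_iX\times_SZ_jX$ to $Z_{i+j}X$ and is \'etale, so homological degree is preserved.

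The algebra axioms come directly from the external product. Associativity follows from associativity of $\times$ together with $Z((X\amalg Y)\amalg W)=ZX\times ZY\times ZW$. Graded commutativity follows from the sign in $\alpha\times\beta=(-1)^{|\alpha||\beta|}\tau_\ast(\beta\times\alpha)$, where $\tau$ is the swap, since $\tau$ is induced by the isomorphism of families $X_T\amalg Y_S\cong Y_S\amalg X_T$. The unit is $[\ast]\in H_0(Z\varnothing)=H_0(\ast)$; note that $Z\varnothing=\ast$, since only $Z_0$ is nonempty.

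For the coalgebra, coassociativity follows from associativity of the monoid $+$ over $S$. The two composites $ZX^{\times_S3}\to ZX$ agree, so functoriality of Gysin maps gives $(+\times\id)^!+^!=(\id\times+)^!+^!$. This is then combined with naturality of the Alexander--Whitney map and its coassociativity up to homotopy. Cocommutativity uses $+\circ\sigma=+$, where $\sigma$ swaps the factors, so $\sigma_\ast+^!=+^!$, together with graded symmetry of $F$. For the counit, $+\circ(0\times\id)=\id$, and the square with $+$ and the inclusion $S\times_SZX\to ZX\times_SZX$ is cartesian, because $Z_0$ is open and closed. Base change then gives $(0^!\otimes\id)\Delta=\id$.

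The main obstacle is compatibility, namely $\Delta(\alpha\cdot\beta)=\Delta(\alpha)\cdot\Delta(\beta)$, together with multiplicativity of the counit. This is where Remark~\ref{rmk:add} enters. The addition $+$ on $Z(X_T\amalg Y_S)$ is identified with $+\times+$ on $ZX\times ZY$, up to the middle-interchange isomorphism of the top row. It follows that
\[
+^!(\alpha\times\beta)=(+\times+)^!(\alpha\times\beta)=+^!\alpha\times+^!\beta,
\]
which uses compatibility of Gysin maps with external products. This compatibility I will derive from base change, by factoring $+\times+=(+\times\id)\circ(\id\times+)$ and noting that each factor is a base change along a product projection. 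It remains to apply $\Delta_\ast$ and $F$ and to check that the Künneth map intertwines the middle interchange with the Koszul sign. This yields the graded tensor product multiplication on $H_\ast\otimes H_\ast$. Finally, $0^!(\alpha\times\beta)=0^!\alpha\times0^!\beta$ follows in the same way, and $\deg$ is multiplicative on external products.
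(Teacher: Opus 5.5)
Your proposal is correct. For the one axiom the paper actually checks, multiplicativity of $\Delta$, it is the same argument: the cartesian square of Remark~\ref{rmk:add} together with compatibility of Gysin pullback with external products gives $+^!(\alpha\cdot\beta)=+^!\alpha\times+^!\beta$, and one then applies $\Delta_\ast$ and $F$ (you are right to flag the Koszul sign in the middle interchange, which the paper suppresses). The remaining axioms, which the paper leaves implicit, go through by the functoriality and base-change arguments you sketch, with two small corrections in the counit step: $(0\times\id)^!+^!=\id$ is plain functoriality, the cartesian square being needed only to move $(0\times\id)^!$ past $\Delta_\ast$, and the counit is $\epsilon=\deg\circ 0^!$, not $0^!$ itself.
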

\begin{pf}
As an example of the various properties, let us check that the coproduct is multiplicative. 
 Our claim is that
$$\Delta:H_\ast\longrightarrow H_\ast\otimes H_\ast$$
 respects multiplication. 
Thus, let $\alpha\in H_\ast(ZX)$ and $\beta\in H_\ast(ZY)$, for families $X\to S$ and $Y\to T$.  We claim
$$\Delta(\alpha\cdot \beta)=\Delta(\alpha)\cdot\Delta(\beta)\,.$$
Applying the cartesian diagram from Remark~\ref{rmk:add}, we see that
$$+^!(\alpha\cdot\beta)=  ( +^!\alpha) \times ( +^!\beta)\,.$$
Mapping to $ZX\times ZX\times ZY\times ZY=ZX\times ZY\times ZX\times ZY$, get
$$\Delta_\ast(+^!(\alpha\cdot\beta))= \Delta_\ast ( +^!\alpha) \times \Delta_\ast( +^!\beta)$$
Applying the Alexander-Whitney map $F$, we get
$$F(\Delta_\ast +^!(\alpha\cdot\beta))= F(\Delta_\ast  +^!\alpha) \times F(\Delta_\ast +^!\beta)\,,$$
which proves the claim.\comment{this is a rather abbreviated proof and obscures that we needed to use, for example, the compatibilty of $G$ with Gysin pullback}
\end{pf}

\begin{prop}
The bialgebra $H_\ast$ is connected, with respect to the grading given by cycle degree $n$.
\end{prop}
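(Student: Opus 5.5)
Connectedness means that the degree-zero part $H_\ast^{(0)}$ (cycle degree $n=0$) is one-dimensional, spanned by the unit, and that the counit restricted to it is an isomorphism onto $\qq$. My plan is to compute $H_\ast^{(0)}$ directly from the colimit. For any family $X\to S$ we have $Z_0X=S$, so the degree-zero summand of $H_\ast(ZX)$ is $H_\ast(S)$. A morphism of families from $Y\to T$ to $X\to S$ induces on degree-zero parts simply the pushforward $H_\ast(T)\to H_\ast(S)$ along the base map.

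The key step uses the empty family $\varnothing\to S$ over each base $S$. It is a family of $d$-folds, since the empty morphism is smooth of relative dimension $d$. It is the pullback of the empty family $\varnothing\to\ast$ along $S\to\ast$, so it admits a morphism of families to $\varnothing\to\ast$. Next, any class $\alpha\in H_\ast(Z_0X)=H_\ast(S)$ coming from a family $X\to S$ is also represented by the same $\alpha$ on the empty family over $S$, since $Z_0\varnothing=S=Z_0X$. There is no direct morphism relating $X\to S$ and $\varnothing\to S$, so this is where I expect the main difficulty. To handle it, I would use the zig-zag through the family $X\amalg\varnothing$, or more cleanly the multiplication. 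We have $\alpha=\alpha\cdot 1$, and we can compare via the disjoint-union family $X_S\amalg\varnothing$. If the colimit does not identify these directly, I would instead use the open (non-proper) empty subfamily and argue that the degree-zero classes are insensitive to $X$ because $Z_0$ only sees $S$.

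Assuming the first step, the empty family over $S$ maps to $\varnothing\to\ast$, and $\alpha$ is identified with $a_\ast\alpha\in H_\ast(\ast)=\qq$ in homological degree $0$. Here $a_\ast\alpha$ is the degree of the degree-zero component of $\alpha$, and it vanishes in positive homological degree. Hence $H_\ast^{(0)}$ is spanned by the unit $[\ast]$, the class of the empty family over a point.

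Finally, the counit (\ref{eq:counit}) sends the unit to $1$, because $0^!$ is the identity on $Z_0=S$. Therefore $H_\ast^{(0)}\cong\qq$, which is connectedness.
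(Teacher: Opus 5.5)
\textbf{There is a genuine gap, at the step you flagged yourself.} You never show that a class $\alpha\in H_\ast(Z_0X)=H_\ast(S)$ on an arbitrary family $X\to S$ equals the same $\alpha$ viewed on the empty family $\varnothing\to S$. Your third paragraph then simply assumes it, and none of the workarounds you suggest supplies it.
\begin{itemize}
\item The disjoint-union family $X_S\amalg\varnothing$ over $S\times\ast=S$ is just $X\to S$ again. So $\alpha=\alpha\cdot 1$ relates $\alpha$ only to itself.
\item The open inclusion $\varnothing\subset X$ over $\id_S$ is not a morphism of families. Morphisms are cartesian squares, and this square would force $\varnothing\cong X\times_S S=X$. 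So it imposes no relation in the colimit.
\item The remark that ``$Z_0$ only sees $S$'' is not an argument. The only relations in $H_\ast$ are $\beta\sim f_\ast\beta$ for cartesian morphisms of families, so you must exhibit a zig-zag of such morphisms joining $(X\to S,\alpha)$ to $(\varnothing\to S,\alpha)$.
\item A naive common target such as $X\amalg\varnothing\to S\amalg S$ does not help either. It sends $\alpha$ to classes on two different components of the base, and something must join them.
\end{itemize}

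\textbf{The missing idea is the $\aaa^1$-homotopy used in the paper.} The family $X\times(\aaa^1\setminus 0)\to S\times\aaa^1$ is smooth of relative dimension $d$. Its pullback along $\id\times 1:S\to S\times\aaa^1$ is $X\to S$, and its pullback along $\id\times 0$ is $\varnothing\to S$. Both are genuine morphisms of families. On $Z_0$ they induce $\id\times 1$ and $\id\times 0$ into $Z_0=S\times\aaa^1$. By homotopy invariance, $(\id\times 1)_\ast\alpha=(\id\times 0)_\ast\alpha$ in $H_\ast(S\times\aaa^1)$, which is exactly the identification you need.

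With this step inserted, the rest of your argument goes through as in the paper: you pull the empty family back from $\varnothing\to\ast$, land in $H_\ast(\ast)=\qq$ in homological degree $0$, and check the counit on $1$.
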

\begin{pf}
This basically follows from the fact that $Z_0(X/S)=S$, for any  family $X\to S$, even the empty one.

Let $X\to S$ be an arbitrary  family, and $\alpha\in H_\ast(Z_0 X)=H_\ast(S)$ a class of homological degree~$0$.  We also consider the empty  family $\varnothing\to S$. We have an isomorphism
$$H_\ast(Z_0\varnothing)=H_\ast(S)\,,$$
and so we can also think of $\alpha$ as an element $\alpha'\in H_\ast(Z_0\varnothing)$. 
In $H_\ast$ we have $\alpha'\sim\alpha$, as can be seen by considering the pullback diagram of  families
$$\begin{tikzcd}
X\ar[r]\ar[d] & X\times (\aaa^1\setminus0)\ar[d] & \varnothing\ar[d]\ar[l]\\
S\ar[r,"\id\times 1"] & S\times \aaa^1 &S\ar[l,"\id\times 0"']
\end{tikzcd}$$
with induced diagram of $Z_0$-spaces:
$$\begin{tikzcd}
S\ar[r]\ar[d] & S\times \aaa^1\ar[d] & S\ar[d]\ar[l]\\
S\ar[r,"\id\times 1"] & S\times \aaa^1 &S\ar[l,"\id\times 0"']
\end{tikzcd}$$
and exploiting homotopy invariance $H_\ast(S\times\aaa^1)=H_\ast(S)$\,. 

Now the empty family over $S$ pulls back from the empty family over $\ast$. Using this, we see that $\alpha'\sim 1$ in the Pardon ring.  Note that, in particular, all classes of degree zero have homological degree~$0$ as well. In other words, classes of degree 0 which have homological degree not equal to 0 vanish. 
\end{pf}

\begin{cor}
The bialgebra $H_\ast$ is a graded Hopf algebra. 
\end{cor}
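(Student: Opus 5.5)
The plan is to use the standard fact that a connected graded bialgebra automatically admits an antipode. Both ingredients are already available. By the first Proposition of this subsection, $H_\ast$ is a bialgebra, graded by the cycle degree $n$, with product and coproduct preserving this grading. By the second Proposition, it is connected: $H_\ast^{(0)}=\qq\cdot 1$, where $H_\ast^{(n)}$ denotes the degree-$n$ part. What remains is to construct the antipode $S$ inductively on $n$ and check that it is a two-sided convolution inverse of the identity.

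First, I will record the shape of the coproduct on a homogeneous element $x\in H_\ast^{(n)}$ with $n\geq1$. Since $\Delta$ is bigraded, $\Delta(x)$ lies in $\bigoplus_{i+j=n}H_\ast^{(i)}\otimes H_\ast^{(j)}$. The counit (\ref{eq:counit}) vanishes in positive degree, because $0^!$ lands in $Z_0X=S$. The counit axiom together with connectedness then forces
$$\Delta(x)=x\otimes1+1\otimes x+\sum x'\otimes x''\,,$$
where every $x',x''$ has cycle degree strictly between $0$ and $n$. Here I will use that the unit of $H_\ast$ is the class represented by the empty family and coincides with the identification $H_\ast^{(0)}=\qq$ from the connectedness proof.

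Next, I will define $S$ by $S(1)=1$ and, for $x\in H_\ast^{(n)}$ with $n\geq1$,
$$S(x)=-x-\sum S(x')\,x''\,.$$
This is well defined by induction, because each $x'$ has smaller degree. By construction $S\ast\id=\eta\varepsilon$. The mirror recursion $S'(x)=-x-\sum x'\,S'(x'')$ gives a left inverse $S'$ with $\id\ast S'=\eta\varepsilon$. Associativity of convolution then gives $S=S\ast(\id\ast S')=(S\ast\id)\ast S'=S'$.

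The only step needing care is the sign bookkeeping, since $H_\ast$ is also graded commutative for the homological grading. One can work with the plain bigraded bialgebra, where the Koszul signs enter only the commutativity statements and not the definition of a Hopf algebra, so the recursion above needs no signs. Once the splitting of $\Delta(x)$ is in place, I expect no real obstacle: the argument is the standard Milnor–Moore/Takeuchi construction, and one may simply cite it.
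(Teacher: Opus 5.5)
Your proposal is correct and is essentially the paper's argument: the paper simply cites Grinberg--Reiner, Proposition~1.4.16 (a connected graded bialgebra is a Hopf algebra, with unique antipode), and you write out the standard inductive construction behind that result, with uniqueness coming from your $S=S'$ step. Your sign remark is harmless but slightly imprecise: Koszul signs do enter the bialgebra compatibility, through the product on $H_\ast\otimes H_\ast$. However, the antipode recursion uses only associativity, coassociativity, the counit and connectedness, and $S$ preserves both gradings, so no signs arise.
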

\begin{pf}
This is proved in \cite{GrinbergReiner}, Proposition~1.4.16.  The antipode is, in fact, unique.
\end{pf}

\begin{cor}
Let $V\subset H_\ast$ be the $\qq$-vector space of primitive elements, i.e., those elements $p$, which satisfy $\Delta p=1\otimes p+p\otimes 1$.  Then 
$$H_\ast=\Sym V\,,$$
is the symmetric $\qq$-algebra generated by $V$. Multiplication is multiplication of symmetric tensors, comultiplication is uniquely determined by being multiplicative, namely
$$\Delta\prod_{i\in I}p_i=\sum_{I=I_1\amalg I_2}\Big(\prod_{i\in I_1}p_i\otimes\prod_{i\in I_2}p_i\Big)\,,$$
and the antipode is determined by being multiplicative, and multiplication by $-1$ on $V$. 

Thus, the structure of the Hopf algebra $H_\ast$ is completely determined by the structure of the $\qq$-vector space $V$ of primitive elements. 
\end{cor}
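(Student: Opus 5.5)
This is an instance of the Milnor–Moore theorem (in its Cartier–Kostant form for graded-commutative, cocommutative connected Hopf algebras over a field of characteristic zero). The plan is to derive it from the structural facts already established rather than from any geometry.

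First, record what we have. By the preceding proposition and corollary, $H_\ast$ is a connected graded Hopf algebra over $\qq$, connected with respect to the cycle degree $n$. The homological grading is compatible with all structure maps, and the product and coproduct are graded (co)commutative for the homological grading. We regard $H_\ast$ as a connected graded Hopf algebra in the symmetric monoidal category of $\zz/2$-graded (super) vector spaces, with parity given by the homological degree. Then $H_\ast$ is commutative and cocommutative in the super sense, and $\Sym V$ is understood as the super-symmetric algebra: polynomial on even primitives, exterior on odd ones. Since the homological grading is preserved by $\Delta$, the space $V$ of primitives is itself bigraded. Moreover $V$ sits in cycle degrees $n\geq1$, because by connectedness the degree-$0$ part of $H_\ast$ is $\qq\cdot 1$.

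Second, construct the comparison map $\phi:\Sym V\to H_\ast$. It is the unique algebra homomorphism extending the inclusion $V\subset H_\ast$, and it exists because $H_\ast$ is (super) commutative. Both sides carry coproducts, with $V$ primitive in each, and both coproducts are algebra maps. Hence $\phi$ is a bialgebra map, and the displayed formula for $\Delta\prod p_i$ holds in $\Sym V$ simply by multiplicativity. The antipode of a Hopf algebra is unique and multiplicative in the commutative case, and it is $-1$ on primitives. So once $\phi$ is an isomorphism, the antipode statement follows.

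Third, prove that $\phi$ is bijective, working degree by degree in $n$.

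\textbf{Injectivity.} A Hopf map out of a connected coalgebra is injective if it is injective on primitives. In characteristic zero the primitives of $\Sym V$ are exactly $V$, as one checks using the Euler/symmetrization argument on the coproduct in tensor-length filtration. So $\phi$ is injective.

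\textbf{Surjectivity.} Argue by induction on $n$. Given $x\in H_\ast$ of cycle degree $n$, the reduced coproduct $\bar\Delta x$ lies in lower degrees. By induction those lower-degree pieces lie in the image of $\phi$, and there one can use the inverse of the canonical Eulerian idempotent. Concretely, take $e=\log(\mathrm{id})$ in the convolution algebra, which is well defined because $\mathrm{id}-\eta\epsilon$ is locally nilpotent by connectedness. Cocommutativity gives that $e$ projects onto primitives, and one has
\[x=\sum_k \frac{1}{k!}\,\mu^{(k)}\big(e^{\otimes k}\big)\Delta^{(k)}x,\]
which expresses $x$ as a polynomial in primitives.

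The main obstacle is exactly this step: showing that $e$ lands in $V$ and that the exponential formula reconstructs $x$. Rather than redo it, I would cite \cite{GrinbergReiner} for the Cartier–Milnor–Moore theorem in the connected graded case, and check only the sign conventions that arise from super-commutativity in the homological degree.
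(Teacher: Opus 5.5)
Your proposal is correct and follows essentially the paper's route: the paper just cites the Cartier--Gabriel(--Kostant) structure theorem (a connected cocommutative Hopf algebra in characteristic zero is the enveloping algebra of its primitives, hence $\Sym V$ once it is commutative), while you cite the equivalent Milnor--Moore form and sketch it via the Eulerian idempotent $\log^{*}(\mathrm{id})$. You are also more careful than the paper in treating the homological grading as a super-grading; note that in that setting $\Sym V$ must be the graded-symmetric algebra, and the displayed formula for $\Delta\prod p_i$ acquires Koszul signs when odd primitives occur.
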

\begin{pf}
This follows from the theorem of Cartier-Gabriel (see \cite{primer}, Remark~3.8.3).  
\end{pf}

We will not be interested in the whole algebra $H_\ast$, instead we will define a `tautological' subalgebra, below.  

\subsection{Vertical classes}
There are two fundamental ways to construct classes in $H_\ast$. Thus, we obtain horizontal and vertical  classes.

If $X\to S$ is a proper  family, then $\pi:Z_nX\to S$ is a proper morphism, smooth of relative dimension $nd$. Hence Gysin pullback defines a map 
$\pi^!:H_\ast(S)\to H_{\ast+2nd}(Z_nX)$, and so every class $\alpha\in H_\ast(S)$ gives rise to a class $\pi^! \alpha\in H_{\ast+2nd}(Z_nX)$, which defines a class in $H_{\ast+2nd}$.   In particular, for $S=\ast$, we get the fundamental class $[Z_nX]\in H_{2nd}(Z_nX)$.  We call classes of the type $\pi^!\alpha$ {\bf vertical classes}.  We may sometimes use the notation $(Z_nX)^!\alpha=\pi^!\alpha$ for such a class. 

If $X\to S$ is a proper family, then for every point $s:\ast\to S$, the fibre $X_s$ of $X$ over $s$ gives rise to a class $[Z_nX_s]$ in Pardon homology.   If $S$ is connected, then all these vertical classes are equivalent in the Pardon homology algebra.  (This uses the fact that $H_0(S)=\qq$, for connected $S$.)

In particular, $[Z_nX]$ and $[Z_nY]$ (for proper $X$, $Y$ over $\ast$) are equal in $H_{2nd}$, if it is possible to put both $X$ and $Y$ in a proper family over a connected base. 

Some formulas come out nicer if we consider the formal sum 
$$(ZX)^!\alpha=\sum_{n\geq0} (Z_nX)^!\alpha\,.$$
For example, for a proper $X\to \ast$, we denote by $[ZX]$ the formal sum $\sum_n [Z_nX]$.

\begin{prop}[Product of vertical classes]
Let $X\to S$ and $Y\to T$ be proper families, and $\alpha\in H_\ast(S)$, $\beta\in H_\ast(T)$   homology classes. Then the induced vertical cycles obey
$$(ZX)^!\alpha\cdot(ZY)^!\beta= Z(X_T\amalg Y_S)^!(\alpha\times\beta)\,.$$

In particular, $[ZX]\cdot [ZY]= [Z(X\amalg Y)]$. 
\end{prop}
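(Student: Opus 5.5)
The plan is to reduce the identity to a compatibility between Gysin pullback along smooth proper morphisms and external products, using the product decomposition of cycle spaces from Remark~\ref{rmk:prod}. Write $\pi_X:ZX\to S$, $\pi_Y:ZY\to T$ and $\pi:Z(X_T\amalg Y_S)\to S\times T$ for the structure maps. Under the isomorphism $Z(X_T\amalg Y_S)\cong ZX\times ZY$ of Remark~\ref{rmk:prod}, which lies over $S\times T$, the map $\pi$ becomes $\pi_X\times\pi_Y$. In cycle degree $n$ this reads
$$Z_n(X_T\amalg Y_S)=\coprod_{k+l=n}Z_kX\times Z_lY\,,$$
and $\pi$ restricts to $\pi_X\times\pi_Y$ on each summand. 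Both sides are proper and smooth, of relative dimension $kd+ld=nd$, so the Gysin maps are defined. Since the product in $H_\ast$ is, by definition, the external product transported along this isomorphism, the claim in each bidegree becomes
$$(\pi_X\times\pi_Y)^!(\alpha\times\beta)=\pi_X^!\alpha\times\pi_Y^!\beta\in H_\ast(Z_kX\times Z_lY)\,.$$
Summing over $k,l$ then gives the formal-sum statement.

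To prove the displayed identity, I would factor $\pi_X\times\pi_Y$ as
$$(\pi_X\times\id_{ZY})\circ(\id_S\times\pi_Y)\,.$$
Functoriality of Gysin maps then reduces the identity to two facts:
$$(\id_S\times\pi_Y)^!(\alpha\times\beta)=\alpha\times\pi_Y^!\beta\,,\qquad (\pi_X\times\id)^!(\alpha\times\gamma)=\pi_X^!\alpha\times\gamma\,.$$
Each is an instance of the compatibility of Gysin pullback with external product by a fixed class on a factor. Put differently, it is compatibility of $f^!$ with the flat base change $S\times f$ of $f$, together with the Künneth map $G$. This is exactly the compatibility of $G$ with Gysin pullback that the proof of multiplicativity of $\Delta$ already used implicitly. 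I would take it from the construction in~\cite{khan}, where Gysin maps come from a fundamental class of the relative cotangent complex, which is compatible with products and base change.

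This compatibility is the main obstacle. If it is not available off the shelf, I would try the following. Write $\alpha\times\beta$ as $(\id\times\beta)_\ast$-type pushforwards: represent $\alpha$ by a map from a cycle and use that Gysin maps commute with proper pushforward. That reduces the check to the case where one factor is a point, and there it is trivial.

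The special case follows by taking $S=T=\ast$ and $\alpha=\beta=[\ast]$. Then $\alpha\times\beta=[\ast]$, and the left side is $[ZX]\cdot[ZY]$ by the definition $[Z_nX]=a^![\ast]$. The right side is $[Z(X\amalg Y)]$.
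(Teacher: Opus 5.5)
Your proposal is correct and follows the paper's proof, which simply invokes the compatibility of proper Gysin pullback with external homology products. Your factorization through $S\times ZY$ just spells that compatibility out one factor at a time. As a composition of maps it should read $(\id_S\times\pi_Y)\circ(\pi_X\times\id_{ZY})$, but the order in which you apply the Gysin maps is the right one.
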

\begin{pf}
This follows from the compatibility of proper Gysin pullback with external homology products. 
\end{pf}

\begin{prop}[coproduct of vertical classes]
Let $X\to S$ be a proper family, and $\alpha\in H_\ast(S)$ a homology class. The coproduct of the vertical class $(ZX)^!\alpha$ is given by
$$\Delta((ZX)^!\alpha)=\sum_{\ell} (ZX)^!\alpha_\ell\otimes (ZX)^!\beta_\ell\,.$$
Here, $\Delta_\ast\alpha=\sum_\ell\alpha_\ell\otimes\beta_\ell$ is the K\"unneth decomposition of the diagonal of $\alpha$.

In other words, every family $X\to S$ defines a (formal) coalgebra morphism
\begin{align*}
H_\ast(S)&\longrightarrow H_\ast\\
\alpha&\longmapsto (ZX)^!\alpha\,.
\end{align*}

For example, 
$$\Delta[ZX]=[ZX]\otimes [ZX]\,.$$
\end{prop}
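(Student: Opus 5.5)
We unwind the definition of the coproduct on the class $\pi^!\alpha$, where $\pi:ZX\to S$, and track it through the three steps $+^!$, $\Delta_\ast$ and $F$.

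\textbf{Step 1: the Gysin pullback along $+$.} Consider the commutative square
$$\begin{tikzcd}
ZX\times_S ZX\ar[r,"+"]\ar[d,"\pi\times_S\pi"'] & ZX\ar[d,"\pi"]\\
S\ar[r,equal] & S
\end{tikzcd}$$
Here $+$ is finite étale and $\pi$ is proper and smooth, so all maps are proper lci. By functoriality of Gysin maps, $(\pi\circ +)^!=+^!\pi^!$. Since $\pi\circ+=\pi\times_S\pi$, we get
$$+^!\pi^!\alpha=(\pi\times_S\pi)^!\alpha\in H_\ast(ZX\times_S ZX)\,.$$
Note that we only need functoriality of Gysin pullback, not a base change argument. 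Working degree by degree, $Z_nX$ receives $Z_kX\times_S Z_{n-k}X$ for all $k$, and the formal sums absorb this bookkeeping.

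\textbf{Step 2: pushing forward along the diagonal.} We use the cartesian square
$$\begin{tikzcd}
ZX\times_S ZX\ar[r,"\Delta"]\ar[d,"\pi\times_S\pi"'] & ZX\times ZX\ar[d,"\pi\times\pi"]\\
S\ar[r,"\Delta"] & S\times S
\end{tikzcd}$$
The vertical map $\pi\times\pi$ is proper and smooth. Gysin maps commute with proper pushforward in cartesian squares, so
$$\Delta_\ast(\pi\times_S\pi)^!\alpha=(\pi\times\pi)^!\Delta_\ast\alpha=(\pi\times\pi)^!\sum_\ell\alpha_\ell\times\beta_\ell\,.$$
Here we have used the Künneth theorem to write $\Delta_\ast\alpha=G(\sum_\ell\alpha_\ell\otimes\beta_\ell)$.

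\textbf{Step 3: external products and Künneth.} Compatibility of proper Gysin pullback with external products, already invoked in the preceding proposition, gives
$$(\pi\times\pi)^!(\alpha_\ell\times\beta_\ell)=\pi^!\alpha_\ell\times\pi^!\beta_\ell\,.$$
Applying $F$, which is inverse to $G$, yields $\sum_\ell\pi^!\alpha_\ell\otimes\pi^!\beta_\ell$, which is the claimed formula.

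\textbf{Consequences.} Counitality and the coalgebra-morphism statement follow from the counit formula~(\ref{eq:counit}): $0^!\pi^!=(\pi\circ 0)^!=\id$. For the example, take $S=\ast$ and $\alpha=[\ast]$; then $\Delta_\ast[\ast]=[\ast]\otimes[\ast]$, so $\Delta[ZX]=[ZX]\otimes[ZX]$.

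\textbf{Main obstacle.} The delicate point is Step 2. We must make sure the base-change compatibility between Gysin pullback and proper pushforward holds in the form used. The map $\Delta:S\to S\times S$ is proper only if $S$ is separated, and here it is merely unramified with representable diagonal. If $S$ is not separated, I would instead phrase Step 2 as an equality in $H_\ast(ZX\times ZX)$ through the Künneth/Alexander–Whitney description. This works because $\Delta_\ast$ is pushforward of singular chains and always exists, and the Gysin map of~\cite{khan} commutes with arbitrary pushforward along the base in a cartesian square where the Gysin-pulled map is proper smooth. Since $\pi\times\pi$ is the proper map in the square, only this form of base change is needed.
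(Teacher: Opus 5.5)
Your argument is correct. The paper states this proposition without proof, and your unwinding of $+^!$, $\Delta_\ast$, $F$ is the evident verification. It uses functoriality $(\pi\circ +)^!=+^!\pi^!$, base change of $(\pi\times\pi)^!$ against $\Delta_{S\ast}$ in the cartesian square, and compatibility of Gysin pullback with external products. These are exactly the Gysin-map properties the paper assumes, and the argument mirrors its one-line proof of the product formula for vertical classes.

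The worry in your last paragraph is moot. Homology pushforward exists along any morphism, and the base-change property only needs the Gysin-pulled map $\pi\times\pi$ to be proper lci, so separatedness of $S$ never enters.
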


\subsection{Horizontal classes}

If we have a relative cycle family   $D\to X$  of degree $n$ in a (non-proper!)  family $X\to S$, it induces a section $\Delta:S\to Z^nX $ of $\pi:Z_nX\to S$.   Via $\Delta_\ast:H_\ast(S)\to H_{\ast}(Z_nX)$,
we get, for every $\alpha\in H_\ast(S)$, an element $\Delta_\ast\alpha$ in $H_\ast$, which we will denote  by $\eta(D/X/S,\alpha)$.
Such classes are {\bf horizontal classes}.

Two horizontal cycles $\eta(D/X/S,\alpha)$ and $\eta(E/Y/T,\beta)$ are equal in Pardon homology if there exists a family $M\to U$, and a relative cycle $F\to M\to U$, and morphisms $s:S\to U$ and $t:T\to U$, such that $D\to X\to S$ is the pullback of $F\to M\to U$ via $s$, and $E\to Y\to T$ is the pullback of $F\to M\to U$ via $t$, and $s_\ast\alpha=t_\ast\beta$ in $H_\ast(U)$. 

\begin{rmk}(Local nature of horizontal classes)\label{local:nature}
If we have a relative cycle family $D\to X\to S$, and $U$ is an open subset of $X$ through which $D$ factors, then for every $\alpha\in H_\ast(S)$, we have $\eta(D/X/S,\alpha)=\eta(D/U/S,\alpha)$. 
\end{rmk}
\begin{pf}
Consider the relative cycle family $D\times\aaa^1\to (X\times\aaa^1\setminus Z\times \{0\})\to S\times \aaa^1$, where $Z$ is the closed complement of $U$ in $X$. The pullback of this cycle family via $(\id,0):S\to S\times \aaa^1$ is $D\to U\to S$, and the pullback via $(\id,1):S\to S\times \aaa^1$ is $D\to X\to S$. Moreover, $(\id,0)_\ast\alpha=(\id,1)_\ast\alpha$ in $H_\ast(S\times \aaa^1)$ by homotopy invariance of homology.
\end{pf}

\begin{prop}[Product of horizontal cycles]\label{prodh}
The product of horizontal cycles $\eta(D/X/S,\alpha)$ and $\eta(E/Y/T,\beta)$ 
is the horizontal cycle
$$\eta(D/X/S,\alpha)\cdot\eta(E/Y/T,\beta)=\eta((D_T\amalg E_S)/(X_T\amalg Y_S)/S\times T,\alpha\times \beta)\,.$$
\end{prop}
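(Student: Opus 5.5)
The plan is to unwind the definitions: the product is the external homology product transported along the isomorphism of Remark~\ref{rmk:prod}, and horizontal classes are pushforwards along sections. So it suffices to show that, under the identification $ZX\times ZY\cong Z(X_T\amalg Y_S)$ of Remark~\ref{rmk:prod}, the product of the two sections is the section induced by the disjoint union cycle. The homology statement then follows from the compatibility of pushforward with external products.

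Write $\Delta_D:S\to Z_nX$ for the section induced by $D\to X\to S$ and $\Delta_E:T\to Z_mY$ for the one induced by $E\to Y\to T$. The first step is to consider the product map $\Delta_D\times\Delta_E:S\times T\to Z_nX\times Z_mY$. I will check, using the modular interpretation in terms of maps, that its composition with the isomorphism $Z_nX\times Z_mY\to Z_{n+m}(X_T\amalg Y_S)$ (the degree-$(n,m)$ component of Remark~\ref{rmk:prod}) is the section $\Delta_{D_T\amalg E_S}$.

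This is a functor-of-points check. The isomorphism of Remark~\ref{rmk:prod} sends a pair consisting of a cycle $D'\to X$ over $S'$ and a cycle $E'\to Y$ over $T'$ to the cycle $D'_{T'}\amalg E'_{S'}\to X_{T'}\amalg Y_{S'}$ over $S'\times T'$. The inverse decomposes a cycle in a disjoint union target according to the components of the target. Evaluating on the universal point $\id_{S\times T}$ gives exactly $D_T\amalg E_S\to X_T\amalg Y_S$, pulled back to $S\times T$. The only care needed is to make the identification of Remark~\ref{rmk:prod} explicit on $T'$-points rather than just as abstract stacks. This step is the only real content, and it is routine.

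Finally, in homology I use $(\Delta_D\times\Delta_E)_\ast(\alpha\times\beta)=\Delta_{D\ast}\alpha\times\Delta_{E\ast}\beta$, which is naturality of the K\"unneth/cross product (the map $G$) under pushforward. Transporting this along the isomorphism of Remark~\ref{rmk:prod} gives
$$\eta(D/X/S,\alpha)\cdot\eta(E/Y/T,\beta)=(\Delta_{D_T\amalg E_S})_\ast(\alpha\times\beta)\,,$$
which is the asserted horizontal class. No relation in the colimit is needed, since both sides are represented on the same family $X_T\amalg Y_S\to S\times T$.
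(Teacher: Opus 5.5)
Your proof is correct and follows the paper's argument. The paper also observes that under the isomorphism $Z(X_T\amalg Y_S)\cong ZX\times ZY$ of Remark~\ref{rmk:prod} the product section $\delta\times\gamma$ corresponds to the section $\delta_T\amalg\gamma_S$ induced by $D_T\amalg E_S$, and then concludes from $\delta_\ast\alpha\times\gamma_\ast\beta=(\delta\times\gamma)_\ast(\alpha\times\beta)$; your functor-of-points check simply spells out the correspondence of sections that the paper reads off its diagram.
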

\begin{pf}
Let us denote by $\delta:S\to ZX$ and $\gamma:T\to ZY$ the sections induced by $D\to X$ and $E\to Y$. Recall the diagram, where the two sections correspond:
$$\begin{tikzcd}[column sep=0em]
Z(X_T\amalg Y_S)\ar[rr,"\sim"]\ar[dr]&&ZX\times ZY\ar[dl]\\
&S\times T\ar[ul,bend left,"\delta_T\amalg\gamma_S"]\ar[ur,bend right,"\delta\times\gamma"']\rlap{\,.}\end{tikzcd}$$
We have 
$$\delta_\ast\alpha\cdot\gamma_\ast\beta=\delta_\ast\alpha\times\gamma_\ast\beta=(\delta\times\gamma)_\ast(\alpha\times\beta)=(\delta_T\amalg\gamma_S)_\ast(\alpha\times\beta)\,.$$
\end{pf}

\begin{prop}[Coproduct of horizontal cycles]\label{coprodh}
Consider a multisection 
$\delta_I:S\times I\to X$, where $I$ is a finite set of order $n$ and $S$ is connected. This gives a cycle family $S\times I\to X\to S$. Let $\alpha\in H_\ast(S)$, so that we have a horizontal class  $\eta(\delta_I/X/S,\alpha)$. Let $\Delta_\ast(\alpha)=\sum_\ell\alpha_\ell\otimes \beta_\ell$ be the K\"unneth decomposition of the diagonal of $\alpha$. Then
$$\Delta(\eta(\delta_I/X/S,\alpha))=\sum_{I=I_1\amalg I_2}\sum_\ell\eta(\delta_{I_1}/X/S,\alpha_\ell)\otimes \eta(\delta_{I_2}/X/S,\beta_\ell)\,.$$
In other words, we have a commutative diagram
$$\begin{tikzcd}
H_\ast(S)\ar[r,"\Delta_\ast"]\ar[d,"\eta({\delta_I}/X/S)"'] &
H_\ast(S)\otimes H_\ast(S)  \ar[d,"\sum_{I=I_1\amalg I_2}\eta(\delta_{I_1}/X/S)\otimes\eta(\delta_{I_2}/X/S)"]\\
H_\ast\ar[r,"\Delta"] &H_\ast\otimes H_\ast\rlap{\,.}
\end{tikzcd}$$
\end{prop}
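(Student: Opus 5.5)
The plan is to compute $\Delta(\delta_\ast\alpha)$ directly from the definition, $\Delta = F\circ\Delta_\ast\circ +^!$, where $\delta:S\to Z_nX$ is the section defined by the multisection $\delta_I$. The key geometric input is a description of the fibre product of $\delta$ with the addition map. Consider the cartesian square
$$\begin{tikzcd}
P\ar[r,"\tilde\delta"]\ar[d,"p"'] & ZX\times_S ZX\ar[d,"+"]\\
S\ar[r,"\delta"] & ZX\rlap{\,.}
\end{tikzcd}$$
A $T$-point of $P$, for $T\to S$, is a splitting of the pulled-back cycle $T\times I\to X_T$ as a disjoint union $D\amalg D'$ of two finite \'etale covers of $T$. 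Because the cycle is the trivial cover $T\times I$, such splittings correspond to decompositions of $I\times\pi_0(T)$ into open and closed pieces. Hence
$$P\;\cong\;\coprod_{I=I_1\amalg I_2} S\,,$$
and on the summand indexed by $(I_1,I_2)$ the map $\tilde\delta$ equals $(\delta_{I_1},\delta_{I_2}):S\to ZX\times_S ZX$. This agrees with the description of $+$ in Remark~\ref{rmk:plus} as finite \'etale of degree $2^n$ over $Z_nX$, and it requires no hypothesis on $X$, so it is compatible with the non-separated setting. I expect this identification to be the main point needing care. One must check that it holds as stacks, not only on points. Since $+$ is representable and \'etale, it suffices to verify that $P\to S$ is finite \'etale of degree $2^n$ and that the $2^n$ listed sections are disjoint and exhaust it. Connectedness of $S$ is assumed only so that the decomposition is indexed exactly by subsets of $I$.

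With this identification, I apply the compatibility of Gysin pullback with proper pushforward in cartesian squares (base change), which is among the properties assumed of the Gysin maps from \cite{khan}. The map $+$ is finite \'etale, hence proper lci of relative dimension $0$, so
$$+^!\delta_\ast\alpha=\tilde\delta_\ast p^!\alpha=\sum_{I=I_1\amalg I_2}(\delta_{I_1},\delta_{I_2})_\ast\alpha\,,$$
using that $p^!$ on a disjoint union of copies of $S$ mapping isomorphically to $S$ simply copies $\alpha$ to each summand.

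Next, composing with $\Delta_\ast:ZX\times_S ZX\to ZX\times ZX$ gives $(\delta_{I_1},\delta_{I_2})$ followed by the inclusion, which equals $(\delta_{I_1}\times\delta_{I_2})\circ\Delta_S$. Therefore
$$\Delta_\ast+^!\delta_\ast\alpha=\sum_{I=I_1\amalg I_2}(\delta_{I_1}\times\delta_{I_2})_\ast(\Delta_S)_\ast\alpha\,.$$
Finally, naturality of the K\"unneth map $F$ with respect to pushforward gives
$$F\,(\delta_{I_1}\times\delta_{I_2})_\ast(\Delta_S)_\ast\alpha=\sum_\ell \delta_{I_1\ast}\alpha_\ell\otimes\delta_{I_2\ast}\beta_\ell\,,$$
which is $\sum_\ell\eta(\delta_{I_1}/X/S,\alpha_\ell)\otimes\eta(\delta_{I_2}/X/S,\beta_\ell)$. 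Summing over decompositions of $I$ yields the formula in Proposition~\ref{coprodh}.
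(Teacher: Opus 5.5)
Your proposal is correct and follows the same route as the paper. The paper's one-line proof rests on exactly your identification of the preimage of $\delta$ under $+$ with $\coprod_{I=I_1\amalg I_2}S$, mapping by $(\delta_{I_1},\delta_{I_2})$. You have simply made explicit the formal steps the paper leaves implicit: base change of $+^!$ against pushforward, the factorization through $\Delta_S$, and naturality of the K\"unneth map $F$.
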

\begin{pf}
This follows from the fact that the preimage of the section $\Delta_I:S\to ZX$ under the addition map $+:ZX\times_S ZX$ is the disjoint union of the sections $\Delta_{I_1}\times \Delta_{I_2}:S\to ZX\times_SZX$. The only way to decompose  $S\times I\to S$ into the union of two finite \'etale maps to $S$, is to decompose $I$ into subsets. 
\end{pf}

\begin{lem}\label{etale:pullback}
If we have a pullback diagram of  families, with $u$ finite \'etale  of degree $\delta$,
\begin{equation}\begin{tikzcd}\label{double}
X'\arrow[d]\arrow[r] & X\arrow[d]& Z_nX'\arrow[d]\arrow[r] & Z_nX\arrow[d,"\pi"]\\
S'\arrow[r,"u"] & S & S'\arrow[r,"u"] & S\end{tikzcd}\end{equation}
with induced pullback diagram of cycle spaces, and a cycle family  $D\to X$, then (with obvious notation)
$$\eta(D'/X'/S',\alpha')=\eta(D/X/S,u_\ast\alpha')$$
in $H_\ast$, 
and hence also
$$\eta(D'/X'/S',u^! \alpha)=\eta(D/X/S,u_\ast u^!\alpha)=
\delta\,\eta(D/X/S,\alpha)\,.$$
\end{lem}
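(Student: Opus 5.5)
The first identity comes straight from the definition of the colimit; the second then follows from it using the degree property of finite \'etale Gysin maps.

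\emph{Step 1: the primed cycle and its section.} Write $D'=D\times_S S'$. This is a relative cycle $D'\to X'\to S'$, finite \'etale of degree $n$ over $S'$. Let $\Delta:S\to Z_nX$ be the section of $\pi$ defined by $D$, and $\Delta':S'\to Z_nX'$ the section defined by $D'$. By the modular interpretation of $Z_nX$ in terms of maps, $\Delta'$ is the base change of $\Delta$ along $u$, since pulling back along $u$ commutes with forming the classifying section. Let $\tilde u:Z_nX'\to Z_nX$ be the top arrow of the right-hand cartesian square in~(\ref{double}). Then
\[
\tilde u\circ\Delta'=\Delta\circ u .
\]

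\emph{Step 2: the first identity.} By definition of $H_\ast$, the class $\Delta'_\ast\alpha'\in H_\ast(Z_nX')$ is identified in the colimit with its image under the transition map $\tilde u_\ast$, because $X'\to X$ is a morphism of families. This gives
\[
\eta(D'/X'/S',\alpha')=\tilde u_\ast\Delta'_\ast\alpha'=\Delta_\ast u_\ast\alpha'=\eta(D/X/S,u_\ast\alpha').
\]
No properness or Gysin maps are used here, only functoriality of pushforward.

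\emph{Step 3: the second identity.} Take $\alpha'=u^!\alpha$. This is defined because $u$ is finite \'etale, hence proper lci of relative dimension $0$. The first identity gives
\[
\eta(D'/X'/S',u^!\alpha)=\eta(D/X/S,u_\ast u^!\alpha).
\]
The standing assumption on Gysin maps says $u_\ast u^!$ is multiplication by $\delta$ on $H_\ast(S)$, so the right-hand side equals $\delta\,\eta(D/X/S,\alpha)$ by linearity of $\Delta_\ast$.

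The only point needing care is Step 1: identifying the section induced by $D'$ with the base change of $\Delta$. This is clear from the modular description, though one might check that the $\Sigma_n$-torsor $\Isom_S(\ul n,D)$ pulls back to $\Isom_{S'}(\ul n,D')$. Everything else is formal.
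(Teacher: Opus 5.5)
Your proof is correct and is the intended argument. The paper states this lemma without a written proof; it follows, as you show, from the defining relation of the colimit applied to $\tilde u\circ\Delta'=\Delta\circ u$, together with the standing assumption that $u_\ast u^!$ is multiplication by the degree for finite \'etale $u$ (and your observation that the first identity needs no hypothesis on $u$ is accurate).
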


\subsection{Pardon's trick.}
For most intents we are interested in vertical cycles, but horizontal cycles are easier to manipulate, especially because the assumption of properness on the family is dropped.  A key feature of the Pardon homology algebra is that it allows the conversion of vertical cycles into horizontal cycles.  This goes via passing to the universal situation.

In our simple situation, where cycle spaces are smooth, this trick is almost a tautology.

Let us suppose given a homology class $\alpha\in H_\ast(S)$, a proper family $X\to S$ with induced cycle space $\pi:Z_n X\to S$, and the induced vertical class $\pi^!\alpha\in H_\ast(Z_nX)$.   
We consider the universal cycle family in the family $X\to S$:
$$\begin{tikzcd}
D\ar[r]\ar[dr] & Z_nX\times_S X\ar[d] & Z_nX\times_S Z_nX\ar[d]\\
& Z_nX & Z_nX\arrow[u,bend left,"\Delta"]\rlap{\,.}\end{tikzcd}$$
Its classifying map $\Delta$ is the diagonal. 
We get the horizontal class
$$\Delta_\ast \pi^!\alpha\in  H_{\ast}(Z_nX\times_S Z_nX\to Z_nX)\,.$$
The claim is that 
$$\pi^!\alpha\sim \Delta_\ast \pi^!\alpha\,,$$
in the Pardon ring $H_\ast$. This can be seen by considering the pullback diagram of  families, and associated diagram of cycle spaces
$$\begin{tikzcd}
Z_nX\times_SX\arrow[r]\arrow[d]& X\arrow[d]& Z_nX\times_S Z_nX\arrow[r,"p"]\arrow[d]& Z_nX\arrow[d]\\
Z_nX\arrow[r,"\pi"] & S& Z_nX\arrow[r,"\pi"]\arrow[ur,"\id"]\arrow[u,bend left,"\Delta"] & S\end{tikzcd}$$
In fact, we have 
\begin{align*}
\Delta_\ast \pi^!\alpha\sim& p_\ast\Delta_\ast\pi^!\alpha\\
=&\id_\ast \pi^!\alpha\\
=&\pi^!\alpha\,.
\end{align*}
For example, (for proper $X\to \ast$) the (vertical) fundamental class $[Z_nX]$ is equal to the (horizontal) class of the diagonal in $H_\ast(Z_nX\times Z_nX)$.

\subsection{Tautological subalgebra}\label{taut}

We introduce some natural elements of the Pardon Hopf algebra.  They will be horizontal classes.  They will be indexed by $d\times n$-matrices of integers.  For later purposes, only the $n=1$ case will be important, as the corresponding classes suffice to multiplicatively generate the tautological subalgebra. 

Suppose given $n$ vector bundles of rank $d$ over $S$, denoted $E_1,\ldots,E_n$. We consider the disjoint union of total spaces of these bundles 
$$X=E_1\amalg\ldots\amalg E_n$$
as a family of $d$-folds over $S$. We also consider the zero sections $\delta_i:S\to E_i$. These give rise to a multisection $\delta$ of $X\to S$. So for $\alpha\in H_\ast(S)$, we have the horizontal class $\eta(\delta/X/S,\alpha)\in H_\ast$ of degree $n$.

The relative cycle space  
$$Z_nX=X^n_S/\Sigma_n=(E_1\amalg\ldots\amalg E_n)^n_S/\Sigma_n$$
has a distinguished component canonically isomorphic to the vector bundle
$$E_1\oplus\ldots\oplus E_n\longrightarrow S\,.$$
(It corresponds to those maps $D\to X$, such that for every $E_i$ the induced map to $E_i$ has degree 1.)
The classifying section $\Delta$ of the multisection $\delta$ is the zero section of this bundle. 
$$\begin{tikzcd}[column sep=1ex]
E_1\oplus\ldots\oplus E_n\ar[rr,"\iota"]\ar[dr] && Z_nX\ar[dl]\\
&S\ar[ul,bend left,"0"]\ar[ur,bend right,"\Delta"']
\end{tikzcd}$$

Now consider a $d\times n$ matrix $N=(n_{ij})$ of non-negative integers and specialize further to
$$S=\pp=\prod_{\substack{i=1..d\\j=1..n}}\pp^{n_{ij}}\,$$
and
$$E_j=\bigoplus_{i=1..d}\O_\pp(i,j)\,,$$
where $\O_\pp(i,j)$ is the bundle $\O_{\pp^{n_{ij}}}(1)$ pulled back to $\pp$ via the projection into the $(i,j)$-component.
Moreover, we let $\alpha$ be the fundamental class
$$\alpha=[\pp]\in H_{2|N|}(\pp)\,,$$
where $|N|=\sum n_{ij}$. 
We will denote the induced horizontal class by 
$$q_N=\eta(\delta/X/\pp,[\pp])\,.$$

Of particular importance will be the case $n=1$, where $N$ is a matrix with one column, which we will denote by a column vector $\vec n$.  So the tautological class $q_{\vec n}$ is associated to the family $$\O(1,0,\ldots,0)+\ldots+\O(0,\ldots,0,1)\longrightarrow \pp^{n_1}\times\ldots\times\pp^{n_d}\,,$$ and the degree 1 map given by the zero section (where $Z_1X=X$), and the homology class $[\pp^{n_1}]\times\ldots\times[\pp^{n_d}]$. 

\begin{rmk}
We have the obvious relation
$$q_N=q_M$$
if $N$ is obtained from $M$ by permuting the columns. 
\end{rmk}

\begin{prop}
For the product of the tautological classes $q_N$ and $q_M$, we have
$$q_N\cdot q_M=q_{N\amalg M}\,,$$
where we write $N\amalg M$ for a matrix obtained by taking all columns from both $N$ and $M$ and forming a new matrix. (This follows from Proposition~\ref{prodh}.)

For the coproduct of the tautological class $q_N$, we have
 $$\Delta(q_N)=\sum_{N=P\amalg Q}q_P\otimes q_Q\,,$$
where the sum is over all ways to split the set of columns of $N$ into two subsets.

It follows that the $\qq$-vector space spanned by the $q_N$, for all $n$, and all $d\times n$-matrices $N$ of non-negative integers, is a Hopf subalgebra of $H_\ast$. 
\end{prop}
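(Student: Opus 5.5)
The plan is to treat the three assertions in turn, reducing each to results already established: Proposition~\ref{prodh} for the product, Proposition~\ref{coprodh} for the coproduct, and a short closure argument for the Hopf subalgebra.

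\textbf{Product.} Write $q_N=\eta(\delta/X/\pp_N,[\pp_N])$ and $q_M=\eta(\epsilon/Y/\pp_M,[\pp_M])$, where $X=E_1\amalg\ldots\amalg E_n$ and $Y=F_1\amalg\ldots\amalg F_m$. By Proposition~\ref{prodh},
\[
q_N\cdot q_M=\eta\bigl((\delta_{\pp_M}\amalg\epsilon_{\pp_N})/(X_{\pp_M}\amalg Y_{\pp_N})/\pp_N\times\pp_M,\ [\pp_N]\times[\pp_M]\bigr).
\]
I then check that this data is exactly the data defining $q_{N\amalg M}$.
\begin{enumerate}[label=(\roman*)]
\item $\pp_N\times\pp_M=\pp_{N\amalg M}$.
\item The pullbacks of $E_j$ and $F_k$ to the product are the bundles $\bigoplus_i\O(i,j)$ attached to the columns of $N\amalg M$.
\item The multisection is the union of the zero sections.
\item $[\pp_N]\times[\pp_M]=[\pp_{N\amalg M}]$, since the fundamental class is compatible with external products (Gysin pullback of $[\ast]$ commutes with products).
\end{enumerate}
Because everything is defined up to canonical isomorphism, these two data define the same class in $H_\ast$.

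\textbf{Coproduct.} The multisection $\delta$ of $X\to\pp$ has the form $\delta_I\colon\pp\times I\to X$ with $I=\{1,\ldots,n\}$ the set of columns, and $\pp$ is connected. Proposition~\ref{coprodh} therefore gives
\[
\Delta q_N=\sum_{I=I_1\amalg I_2}\sum_\ell \eta(\delta_{I_1}/X/\pp,\alpha_\ell)\otimes\eta(\delta_{I_2}/X/\pp,\beta_\ell),
\]
where $\Delta_\ast[\pp]=\sum_\ell\alpha_\ell\otimes\beta_\ell$. This step is the main obstacle. On the right we want $q_P\otimes q_Q$, where $P$ and $Q$ are the column submatrices indexed by $I_1$ and $I_2$, but the terms above live over the full base $\pp$ and carry the Künneth components of the diagonal. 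I would handle it as follows.

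First, by Remark~\ref{local:nature}, shrink $X$ to $X_{I_1}=\coprod_{j\in I_1}E_j$, which contains $\delta_{I_1}$. This family is pulled back from $\pp_P$ along the projection $\mathrm{pr}_P\colon\pp=\pp_P\times\pp_Q\to\pp_P$. So $\eta(\delta_{I_1}/X/\pp,\alpha_\ell)=\eta(\delta/X_P/\pp_P,\mathrm{pr}_{P\ast}\alpha_\ell)$, and similarly for $I_2$ with $\mathrm{pr}_Q$.

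Next I need
\[
\sum_\ell \mathrm{pr}_{P\ast}\alpha_\ell\otimes\mathrm{pr}_{Q\ast}\beta_\ell=[\pp_P]\otimes[\pp_Q].
\]
The left side is the Künneth decomposition of the pushforward of $\Delta_\ast[\pp]$ along $\mathrm{pr}_P\times\mathrm{pr}_Q\colon\pp\times\pp\to\pp_P\times\pp_Q$. The composite $(\mathrm{pr}_P\times\mathrm{pr}_Q)\circ\Delta$ is an isomorphism $\pp\to\pp_P\times\pp_Q$, so this pushforward is $[\pp_P\times\pp_Q]=[\pp_P]\times[\pp_Q]$. Its Künneth decomposition is $[\pp_P]\otimes[\pp_Q]$, using naturality of $F$.

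Degenerate splittings are consistent with this: when $I_1=\varnothing$, the class $\eta(\varnothing,\ldots)$ lies in degree $0$ and equals $\deg$ of the component, matching $q_\varnothing=1$ by the connectedness proposition.

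\textbf{Hopf subalgebra.} By the two formulas, the span of the $q_N$ is closed under product and coproduct. It contains the unit, since $q_\varnothing=1$ (empty matrix, base a point, empty family). It is closed under the antipode, because in a connected graded bialgebra the antipode is given recursively by $S(x)=-x-\sum S(x')x''$, and each step stays in the span. The counit is likewise compatible with the span. Hence the span of the $q_N$ is a Hopf subalgebra of $H_\ast$.
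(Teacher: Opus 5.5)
Your proposal is correct and follows essentially the same route as the paper: Proposition~\ref{prodh} for the product, and for the coproduct Proposition~\ref{coprodh}, then Remark~\ref{local:nature} to shrink to $\coprod_{j\in I_1}E_j$, then the observation that this family is pulled back from $\pp_P$. The one difference is minor: the paper expands $\Delta_\ast[\pp^N]=\sum_{N=N_1+N_2}[\pp^{N_1}]\otimes[\pp^{N_2}]$ and discards the terms that push forward to zero, whereas you push the diagonal forward along $\mathrm{pr}_P\times\mathrm{pr}_Q$ and use that its composite with $\Delta$ is an isomorphism, which gives $[\pp_P]\otimes[\pp_Q]$ directly and a little more cleanly.
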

\begin{pf}
Let us prove the formula for the coproduct.  We need some more notation.  Let  us write $\pp^N$ for the product of all $\pp^{n_{ij}}$, where $N=(n_{ij})$.  Let us write $J$ for the indexing set of the columns of $N$. For a subset $J_1\subset J$, let us write 
$$E_{J_1}=\coprod_{j\in J_1} E_j\,$$
and $\delta_{J_1}$ for the multi-$0$-section $\pp^N\times J_1\to E_{J_1}$.

The K\"unneth decomposition of the  diagonal of $\pp^N$ is given by
$$\Delta_\ast[\pp^N]=\sum_{N=N_1+N_2}[\pp^{N_1}]\otimes[\pp^{N_2}]\,.$$
So by Proposition~\ref{coprodh}, we have
\begin{align*}
\Delta(q_N)=&\Delta(\eta(\delta_J/E_J/\pp^N,[\pp^N]))\\
=&\sum_{J=J_1\amalg J_2}\sum_{N=N_1+N_2}\eta(\delta_{J_1}/E_J/\pp^N,[\pp^{N_1}])\otimes
\eta(\delta_{J_2}/E_J/\pp^N,[\pp^{N_2}])\\
=&\sum_{J=J_1\amalg J_2}\sum_{N=N_1+N_2}\eta(\delta_{J_1}/E_{J_1}/\pp^N,[\pp^{N_1}])\otimes
\eta(\delta_{J_2}/E_{J_2}/\pp^N,[\pp^{N_2}])\,,
\end{align*}
where we have used the local nature of horizontal classes in the last step. Now we claim that 
$\eta(\delta_{J_1}/E_{J_1}/\pp^N,[\pp^{N_1}])$ vanishes unless $N_1=N|_{J_1}\amalg \,0|_{J_1^c}$. In fact, this follows from the fact that $E_{J_1}\to \pp^N$ pulls back from $\pp^{N|_{J_1}}$, and $[\pp^{N_1}]$ pushes forward to $0$ in $H_\ast(\pp^{N|_{J_1}})$, unless $N_1|_{J_1}=N|_{J_1}$. Thus,
$$\Delta(q_N)=\sum_{J=J_1\amalg J_2} \eta(\delta_{J_1}/E_{J_1}/\pp^{N|_{J_1}},[\pp^{N|_{J_1}}])\otimes
\eta(\delta_{J_2}/E_{J_2}/\pp^{N|_{J_2}},[\pp^{N|_{J_2}}])\,,$$
which is our claim.
\end{pf}

\begin{defn}
We call the $\qq$-vector subspace of $H_\ast$ generated by the $q_N$ the   {\bf tautological }Pardon-Hopf algebra (of finite maps to $d$-folds).
\end{defn}

\begin{rmk}
This is, indeed, a Hopf algebra, it is bigraded, although all homological degrees are even, so this Hopf algebra is commutative and cocommutative without the `graded' qualifier. It will turn out that the space of primitives is equal to the subspace of cycle degree~1.
\end{rmk}

Note that  the $d\times 1$-matrices with non-negative integer entries form a multiplicative generating set for the tautological algebra.  Let us write $d\times 1$-matrices as column vectors $\vec n$, and so $q_{\vec n}$ for these multiplicative generators.  There is exactly one $d\times 0$ matrix, namely the empty one.  The associated tautological horizontal class is $1\in H_\ast$, the multiplicative unit.  Taking this into account, we see that the multiplicative generators $q_{\vec n}$ are primitive (this is anyway clear, because degree one elements are automatically primitive). 

\begin{prop}\label{runs:over}
The $\qq$-vector space $P$ of primitive elements in the tautological algebra is spanned by the $q_{\vec n}$, where $\vec n$ runs over the column vectors of size $d$, with non-negative integer entries. The symmetric group on $d$ letters $\Sigma_d$ acts on these generators by permuting the entries of 
$\vec n$. The space $P$ is isomorphic to the quotient of the free $\qq$-vector space on the generators 
$\vec n\in \zz_{\geq0}^n$ modulo this action of $\Sigma_d$.  We can canonically identify $P$ with the homology of $B\GL_d$ (or the graded dual of the cohomology of $B\GL_d$). A basis of $P$ is indexed by integer partitions with $d$ parts (with trailing zeros allowed), where we set $q_\lambda=q_{\vec n}$, where $\vec n$ is the vector containing the parts of $\lambda$ in decreasing order.
\end{prop}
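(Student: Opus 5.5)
The proof splits into three steps: show the primitives are exactly the degree-one span, identify the relations among the $q_{\vec n}$, and then prove linear independence. Throughout, the tautological algebra $T$ is spanned by the products $q_N=\prod_j q_{\vec n_j}$ of the column classes. By the previous proposition it is a connected graded Hopf subalgebra, and it is commutative and cocommutative.

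\textbf{Step 1: the primitives are the degree-one span.} Let $V_1$ be the span of the $q_{\vec n}$. These are primitive, and $V_1$ generates $T$ as an algebra. So $T$ is a quotient of $\Sym V_1$ by a Hopf ideal. By the Cartier--Gabriel corollary, $T=\Sym P$, and $P\supseteq V_1$. A product of $k\geq 2$ elements of $V_1$ has cycle degree $k$. The indecomposables $P\cong T^+/(T^+)^2$ are therefore concentrated in cycle degree one, which forces $P=V_1$.

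\textbf{Step 2: the relations, and the map to $H_\ast(B\GL_d)$.} The $\Sigma_d$-invariance $q_{\vec n}=q_{\sigma\vec n}$ comes from the evident isomorphism of families that permutes the factors $\pp^{n_i}$ and the summands $\O(\ldots)$. For the identification with $H_\ast(B\GL_d)$, I would use the degree-one part of the theory. A degree-one horizontal class $\eta(\delta/X/S,\alpha)$ is determined by the pair consisting of the rank-$d$ bundle $\delta^\ast T_{X/S}$ on $S$ and the class $\alpha$. By Remark~\ref{local:nature} together with deformation to the normal bundle (which is legitimate here, because separatedness is not required), one replaces $X$ by the total space of the normal bundle. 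Pushing $\alpha$ forward along the classifying map $S\to B\GL_d$ then gives a well-defined linear map
$$\Phi\colon H_\ast^{(n=1)}\longrightarrow H_\ast(B\GL_d)$$
on horizontal degree-one classes. Concretely, $Z_1X=X\to S$, and the class $\delta_\ast\alpha$ passes through the universal example, which is the universal bundle over $B\GL_d$, approximated by finite-dimensional Grassmannians. This map is compatible with the colimit.

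\textbf{Step 3: computing $\Phi$ and independence (the main obstacle).} We have
$$\Phi(q_{\vec n})=c_\ast\bigl([\pp^{n_1}]\times\cdots\times[\pp^{n_d}]\bigr),$$
where $c\colon \prod_i\pp^{n_i}\to (B\Gm)^d\to B\GL_d$. The images of the classes $[\pp^{n_1}]\times\cdots\times[\pp^{n_d}]$ form the standard basis of $H_\ast(B\Gm^d)=H_\ast(B\Gm)^{\otimes d}$, dual to the monomials $x_1^{m_1}\cdots x_d^{m_d}$. Pushing forward to $B\GL_d$ is dual to the inclusion $H^\ast(B\GL_d)=\qq[x_1,\ldots,x_d]^{\Sigma_d}\hookrightarrow \qq[x_1,\ldots,x_d]$. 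Its image is therefore the $\Sigma_d$-coinvariants of the monomial basis, and the classes indexed by partitions form a basis, dual to the monomial symmetric functions.

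Consequently, distinct partitions give linearly independent $q_\lambda$, and the span of the $q_{\vec n}$ is at most the $\Sigma_d$-coinvariants. This gives $P\cong (\qq\zz_{\geq0}^d)_{\Sigma_d}\cong H_\ast(B\GL_d)$.

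The delicate point is the well-definedness of $\Phi$ on $H_\ast$. The expected difficulty is checking that every relation in the colimit, namely pullbacks of families, preserves the pushforward to $B\GL_d$. For this I would argue that a morphism of families induces a compatible map of bases under which the pulled-back section has pulled-back normal bundle.
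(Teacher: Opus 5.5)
Your proof is correct and is essentially the paper's argument in dual form. The paper proves independence by pairing $q_\mu$ with the degree-one enumerative theories $e_{m_\lambda}=m_\lambda(T_{Z_1X/S})$ to get $\langle e_{m_\lambda},q_\mu\rangle=\delta_{\lambda\mu}$, and on cycle-degree-one classes these theories are exactly $\langle m_\lambda,\Phi(-)\rangle$ for your $\Phi$. Your Step~1 via Cartier--Gabriel is in fact more careful than the paper's one-line argument.

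The ``delicate point'' you flag goes away if you define $\Phi$ on all of $H_\ast(Z_1X)=H_\ast(X)$, for each family, as pushforward along the classifying map of $T_{X/S}$. Specifying $\Phi$ only on horizontal classes via deformation to the normal bundle does not by itself give a linear map on the colimit. With the definition on all of $H_\ast(X)$, compatibility with the transition maps is immediate, since $T_{X'/S'}=f^\ast T_{X/S}$ for cartesian morphisms. Because $\delta^\ast T_{X/S}$ is the normal bundle of a section, this recovers your formula on horizontal classes. Dualizing to characteristic classes in $H^\ast(X)$, as the paper does, also removes any need for homology of the non-Deligne--Mumford stack $B\GL_d$.
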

\begin{pf}
There cannot be any primitives which are not in the span of the $q_{\vec n}$, because if there were, they would not be in the subalgebra generated by the $q_{\vec n}$.  But this subalgebra is the entire tautological algebra.   

Permuting the entries of $\vec n$ gives an isomorphic family of maps, and hence an equal element in $H_\ast$. It remains to show that the $q_\lambda$ are $\qq$-linearly independent. For this, we borrow the notion of enumerative theory from the next section.   

Let $m_\lambda$ be the monomial symmetric function given by the partition $\lambda$, and $e_{\lambda}$ the enumerative theory (of degree 1) it defines, see \ref{cherntang}.

Denote $E=\O(1,0,\ldots,0)+\ldots+\O(0,\ldots,0,1)$, and the bundle projection by $\pi:E\to \pp^{n_1}\times\ldots\times\pp^{n_d}$. Also, denote the first Chern class of the tautological bundle coming from the $i$-th factor of $\pp^{n_1}\times\ldots\times\pp^{n_d}$ by $\gamma_i$. So $c_p(E)$ is the elementary symmetric function $\sigma_p$ of degree $p$ in $\gamma_1,\ldots,\gamma_d$. 

When we pair a monomial $\gamma_1^{r_1}\ldots\gamma_d^{r_d}$ with $[\pp^{n_1}]\times\ldots\times[\pp^{n_d}]$, we obtain $1$ if all $r_i=n_i$, and $0$ otherwise. 

We pair $e_\lambda$ with $q_{\vec n}$. \begin{align*}
\langle e_\lambda,q_{\vec n}\rangle=&\langle m_\lambda(\pi^\ast E),0_\ast([\pp^{n_1}]\times\ldots\times [\pp^{n_d}])\rangle\\
=&\langle \pi^\ast m_\lambda(E),0_\ast([\pp^{n_1}]\times\ldots\times [\pp^{n_d}])\rangle\\
=&\langle  m_\lambda(E),[\pp^{n_1}]\times\ldots\times [\pp^{n_d}]\rangle\\
=&\text{  coefficient of $\gamma_1^{n_1}\ldots\gamma_d^{n_d}$ in $m_\lambda(\gamma)$}\,.
\end{align*}
This is equal to 1 if the partition defined by $\vec n$ is equal to $\lambda$, and zero otherwise. 
\end{pf}

\begin{ex}
For $d=0$, there is only the empty partition, so the tautological algebra is
$$\qq[q_{\varnothing}]\,.$$
For $d=1$, partitions are non-negative integers, so the tautological algebra is
$$\qq[q_0,q_1,q_2,\ldots]\,.$$
For $d=2$, the tautological algebra is
$$\qq[q_{\smat{0\\0}},q_{\smat{1\\0}},q_{\smat{2\\0}},q_{\smat{1\\1}},\ldots]$$
\end{ex}

\begin{ex}\label{dim0}
Let us consider the case $d=0$. In this case a family $X\to S$ is an \'etale morphism of Deligne-Mumford stacks. A degree $n$ map to $X\to S$ is a morphism $D\to X$, such that $D\to S$ is finite \'etale.  Families over the point are finite sets, and for a finite set $X$ of order $k$,  the cycle space $Z_nX$ is 
$$Z_nX= X^n/\Sigma_n=\coprod_{n=n_1+\ldots+n_k}B\Sigma_{n_1}\times\ldots \times B\Sigma_{n_k}\,,$$
and
$$ZX=\Big(\coprod_{n} B\Sigma_n\Big)^X\,.$$

The tautological classes $q_N$ are indexed by $0\times n$-matrices, which are necessarily all empty, so there is only one per degree $n$, the empty one.  We will denote the corresponding generator by $q_n$. It is associated to the finite map $\ul n\stackrel{\id}{\longrightarrow}\ul n\to \ast$, and is equal to $1$ in the copy of $\qq$ corresponding to the distingished component in the cycle space $Z_n \ul n=\ul n^n/\Sigma_n$. As we have established, we have $q_n=q_1^n$, and so the tautological algebra in dimension $0$ is generated multiplicatively by $q_1$. 

We have
$$\Delta(q_n)=\sum_{i=0}^n \binom{n}{i}  q_{i}\otimes q_{j}\,.$$

If we restrict to the family $X=\ast\to S=\ast$, then the cycle space is $ZX=\coprod_n B\Sigma_n$, and the (vertical) fundamental class is $[Z_nX]=1/n!\in H_0(B\Sigma_n)=\qq$.  The horizontal class associated to the finite map $\ul n\to \ast \to \ast$, denoted $\eta(\ul n/\ast/\ast,[\ast])$ is equal to $1\in H_0(B\Sigma_n)=\qq$.

Consider the affine line with $n$ origins $\tilde \aaa^1$.  It has an \'etale projection to $\aaa^1$, which is the identity away from the origins, and maps all $n$ origins of $\tilde \aaa^1$ to the single origin in $\aaa^1$. There is also a morphism $\aaa^1\times \ul n\to \tilde \aaa^1$, which maps the $i$-th origin in $\aaa^1\times \ul n$ to the $i$-th origin in $\aaa^1$, for all $i=1,\ldots,n$, and is the projection onto $\aaa^1$ away from the origins.  In fact, the composition $\aaa^1\times \ul n\to \tilde \aaa^1\to \aaa^1$ is a family of finite maps of degree $n$ parametrized by $\aaa^1$. We see that horizontal classes $\eta(\ul n/\ast/\ast,[\ast])$ and $\eta(\ul n/\ul n/\ast,[\ast])$ are equal in $H_0$. (In fact, $\tilde\aaa^1$ is the deformation to the normal cone of the map $\ul n\to \ast$.)
We have proved that $\eta(\ul n/\ast/\ast,[\ast])=q_n=q_1^n$.  From this, it follows that 
$[Z_n\ast]=\frac{1}{n!}\eta(\ul n/\ast/\ast,[\ast])=\frac{1}{n!}q_1^n$, and so $[Z\ast]=e^{q_1}$, and 
finally
$$[Z X]=(e^{q_1})^{\chi (X)}\,.$$
We see that all vertical classes are tautological. 

In fact, the tautological algebra is equal to the polynomial ring $\qq[q_1]$,  because it is graded and $q_n$ is non-zero, as can be seen from the enumerative theory (see below),  which is $1\in H_0(Z_nX)$ for all $X/S$ and $0\in H_0(Z_mX)$, for all $X/S$ and all $m\not=n$. 
\end{ex}

\begin{ex}
We define the {\bf total degree }of an element in $H_\ast$ represented by $\alpha\in H_i(Z_nX)$, for a family $X\to S$, by $\deg(\alpha)=i-2nd$.  The motivation for doing this is that all vertical fundamental classes $[Z_nX]$ for $X\to \ast$ have total degree zero.  We denote the subset of $H_\ast$ of elements of degree $0$ by $H_\ast^0$. This is a subalgebra, but not a Hopf subalgebra, because the comultiplication does not preserve $H_\ast^0$.  The total degree of the tautological class $q_N$ is 
$$\deg(q_N)= 2\Sigma(N)-2nd\,, $$
where $\Sigma(N)$ denotes the sum of all entries of $N$.  So $\deg(q_N)=0$ if and only if the entries of $N$ average out to $1$. Unfortunately, a product can be of degree zero without the factors having this property.  Therefore it is not straightforward to write down a presentation of this algebra as a polynomial ring.

In dimension 1, the tautological degree zero algebra is
$$\qq[q_1,q_0q_2, q_0^2q_3,\ldots]\,.$$
\end{ex}

\subsection{Deformation argument}

\begin{thm}
The tautological algebra contains all vertical and all horizontal classes.
\end{thm}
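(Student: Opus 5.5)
By Pardon's trick, every vertical class $\pi^!\alpha$ equals the horizontal class $\Delta_\ast\pi^!\alpha$ of the universal cycle family. So I will only treat horizontal classes $\eta(D/X/S,\alpha)$.

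\emph{Reduction to split cycles in vector bundles.} Let $D\to X\to S$ be a cycle family of degree $n$. Base change along the finite \'etale map $D\to S$, and iterate on the complement, to reach a finite \'etale $u:S'\to S$ over which $D$ becomes a union of $n$ sections $\delta_1,\dots,\delta_n$. Since $u_\ast u^!=\deg(u)$, Lemma~\ref{etale:pullback} gives $\eta(D/X/S,\alpha)=\frac1{\deg u}\eta(D'/X'/S',u^!\alpha)$, so we may assume $D=S\times\ul n$ is a multisection. Next we separate the sections. This is the point where non-separatedness is used. Take the family over $S\times\aaa^1$ obtained by gluing $n$ copies of $X\times\aaa^1$ along the complement of $S\times\{0\}$ near the relevant sections; concretely, take the $n$-fold non-separated doubling of $X\times\aaa^1$ along $X\times\{0\}$, as in the line with $n$ origins of Example~\ref{dim0}. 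Over $t\neq0$ it is $X$, and over $t=0$ it is $X^{\amalg n}$. The multisection extends, with the $i$-th section landing in the $i$-th copy over $0$. Homotopy invariance then gives
$$\eta(\delta/X/S,\alpha)=\eta\big(\textstyle\coprod_i\delta_i/\coprod_i X/S,\alpha\big).$$
Using Remark~\ref{local:nature}, we may replace each copy of $X$ by a neighbourhood of $\delta_i(S)$. Deformation to the normal bundle (a family over $S\times\aaa^1$ whose special fibre is $N_i=\delta_i^\ast T_{X/S}$, a rank $d$ bundle) then identifies the class with $\eta(0/N_1\amalg\dots\amalg N_n/S,\alpha)$. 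This is $\iota_\ast 0_\ast\alpha$ for the distinguished component $N_1\oplus\dots\oplus N_n$.

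\emph{Classification step.} It remains to show that $\eta(0/E_1\amalg\dots\amalg E_n/S,\alpha)$ is tautological for rank $d$ bundles $E_j$ on $S$. By the product formula (Proposition~\ref{prodh}) and the Künneth decomposition of $\Delta_\ast\alpha$, this reduces to the case $n=1$, up to a pushforward along the diagonal. Indeed, the class is the pushforward of $\alpha$ along $S\to\prod_j S$, and the diagonal families pull back from the product family $\coprod_j \mathrm{pr}_j^\ast E_j$ over $S^n$. Applying Künneth on $S^n$ writes it as a combination of products of degree-one classes $\eta(0/E/T,\beta)$.

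For a single bundle $E\to T$, apply the splitting principle. Pull back to the flag bundle $u:F\to T$, which is proper and smooth but not étale. I therefore need a variant of Lemma~\ref{etale:pullback}, namely $\eta(\cdot,u_\ast\beta')=\eta(u^\ast\cdot,\beta')$, which holds for arbitrary pullback morphisms. Choose $\beta'$ with $u_\ast\beta'=\beta$, for example by capping $u^!\beta$ with a suitable cohomology class. Then $E$ splits into line bundles $L_1\oplus\dots\oplus L_d$. Since $H_\ast(F)$ is finite dimensional on the relevant pieces, the classifying map to $\prod B\Gm$ can be approximated by $\prod_i\pp^{N}$, with the $L_i$ pulled back from $\O(1)$ on the $i$-th factor. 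Pushing $\beta'$ forward into $H_\ast(\prod\pp^N)$ expresses it as a combination of $[\pp^{n_1}]\times\dots\times[\pp^{n_d}]$, and each of these gives $q_{\vec n}$ by the same local-nature and pullback argument used in the coproduct computation.

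The main obstacle is this last step. Bundles on an arbitrary Deligne--Mumford stack need not be globally generated, so the classifying map to a product of projective spaces exists only after passing to a suitable cover (e.g.\ a Jouanolou-type affine bundle, which is a homology isomorphism), or after restricting to compact supports of $\beta$ via the colimit. I would first try the Jouanolou trick combined with homotopy invariance, since affine bundles behave like étale pullbacks in homology.
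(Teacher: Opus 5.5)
\textbf{The proof is incomplete.} Everything up to the last step works, but the step you flag as the main obstacle is left open, and the fixes you suggest do not work in this generality.

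\textbf{What works, and how it compares with the paper.} Your reductions match the paper's argument up to packaging.
\begin{itemize}
\item You separate the sections with the line with $n$ origins and then deform each section to its normal bundle. The paper instead deforms $D\to X$ to its normal cone in one step; that family is non-separated for the same reason yours is.
\item You reduce to a single column using K\"unneth on $S^n$ and Proposition~\ref{prodh}. The paper instead treats all $n$ columns at once by mapping to $(\pp^k)^{dn}$ and obtaining the classes $q_N$.
\end{itemize}
There is one small inaccuracy. On the proper flag bundle, $E$ only acquires a filtration with line-bundle quotients, not a splitting. You can fix this by passing on to the $\GL_d/T$-bundle, as the paper does; it is an affine bundle over the flag bundle, so it is still surjective on homology. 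Alternatively, deform to the associated graded by a Rees family over $\aaa^1$.

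\textbf{The gap.} A relation in $H_\ast$ requires an algebraic morphism of families. Concretely, you need a base change $f$ whose pushforward $f_\ast$ hits your class, together with a morphism $g$ to $\pp^k$ satisfying $f^\ast L\cong g^\ast\O(1)$.
\begin{itemize}
\item Approximating $B\Gm$ topologically by $\pp^N$ only produces a continuous map, which gives no relation.
\item The Jouanolou trick needs a quasi-projective (or at least divisorial) base. On Deligne--Mumford stacks it fails outright. Take $S=BG$ with $L$ a nontrivial character. Every affine bundle over $BG$ still has a point with stabilizer $G$ acting nontrivially on the fibre of $L$. But pullbacks of $\O(1)$ from a scheme have trivial stabilizer action.
\item Restricting to a neighbourhood of the support of the class fails for the same local reason.
\end{itemize}

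\textbf{The missing idea.} Base change along the punctured bundle $f:L^{\oplus(k+1)}\setminus 0_S\to S$.
\begin{itemize}
\item This space carries the tautological morphism $g(x_0,\dots,x_k)=[x_0:\dots:x_k]$ to $\pp^k$, with $f^\ast L\cong g^\ast\O_{\pp^k}(1)$.
\item Because some $x_i\neq0$, all stabilizers act trivially on $f^\ast L$, so the stacky obstruction above disappears.
\item The zero section has complex codimension $k+1$. The Thom isomorphism in the long exact sequence of the pair $(L^{\oplus(k+1)},L^{\oplus(k+1)}\setminus 0_S)$ therefore shows that $f_\ast$ is surjective on $H_i$ for $i\le 2k+1$.
\item For $k$ large, your class lifts to some $\tilde\beta$. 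The pullback relation $\eta(\cdot,f_\ast\tilde\beta)=\eta(f^\ast\cdot,\tilde\beta)$, followed by pushforward along $g$, moves the class onto a family pulled back from $\pp^k$.
\end{itemize}
Doing this for each $L_i$, either successively or via the fibre product of the punctured bundles, lands you on $(\pp^k)^d$. Its homology is spanned by the linear subspaces $\pp^{n_1}\times\dots\times\pp^{n_d}$, and these give the $q_{\vec n}$ as you intended.
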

\begin{pf}
We already know that all vertical classes are horizontal, and so it suffices to consider a horizontal class $\eta(D/X/S,\alpha)$, where $D\to X\to S$ is a family of finite maps, say of degree $n$,  to the family $X\to S$, and $\alpha\in H_\ast(S)$. As before, we will denote the section of $Z_nX\to S$ defined by $D$ by $\Delta$, so that $\eta(D/X/S,\alpha)=\Delta_\ast\alpha\in H_\ast(Z_nX)$.

\paragraph{Step 1. Finite \'etale base change.}  First we consider the base change $f:D\to S$, which is a finite \'etale map of degree $n$. Lemma~\ref{etale:pullback} tells us that we can replace $\alpha$ by $\frac{1}{n}f^!\alpha$,  and replace the base $S$ by $D$. This  simplifies the problem, because now $D\to S$ has obtained a section, and so splits off a copy of $S$.  Repeating this argument, we reduce to the case where $D$ is the disjoint union of $n$ copies of $S$. 

\paragraph{Step 2. Deformation to the normal cone.}
This is another way to simplify horizontal classes.   
We consider the deformation to the normal cone
$$\begin{tikzcd}
D\ar[r]\ar[d] & D\times\pp^1\ar[d]\ar[r] & D\ar[l]\ar[d]\\
C_{D/X}\ar[r]\ar[d] & M^0_{D/X}\ar[d] & X\ar[l]\ar[d]\\
S\ar[r,"\id\times\infty"] & S\times \pp^1 & S\ar[l,"\id\times 0"']
\end{tikzcd}$$
of the morphism $D\to X$ relative to $S$.
The deformation space is $M^0_{D/X}$, it is flat over $S\times \pp^1$.  So to check that it is smooth over $S\times \pp^1$, it suffices to check fiberwise. But as $D\to X$ is a regular local immersion,  the normal cone $C_{D/X}$ is a vector bundle over $D$, and $D\to S$ is smooth, so, indeed, $C_{D/X}\to S$ is smooth.

Note that if $D\to X$ is not an immersion, the deformation space $M^0_{D/X}$ is typically not separated. (See Example~\ref{dim0}.) This is why we insist that families be allowed to be non-separated. 

Anyway, $M_{D/X}^0\to S\times \pp^1$ is a family, and $D\times \pp^1\to M^0_{D/X}$ is a family of finite maps of degree $n$.

We get the induced pullback diagram of relative cycle spaces
$$\begin{tikzcd}
Z_n C_{Z/X}\ar[d]\ar[r] & Z_n M_{D/X}^0\ar[r]\ar[d] & Z_nX\ar[l]\ar[d]\\
S\ar[r]\ar[u,bend left,"\Delta_\infty"] & S\times \pp^1\ar[u,bend left,"\Delta"] & S\ar[l]\ar[u,bend left,"\Delta_0"]
\end{tikzcd}$$
Both horizontal classes $\Delta_{0\ast}\alpha$ and $\Delta_{\infty\ast}\alpha$ push forward to $\Delta_\ast(\alpha\times[\ast])$, so they are equal in Pardon homology.

This allows us to assume that the family $X\to S$ is a vector bundle over $D$, composed with the finite \'etale projection $D\to S$. 

\paragraph{Step 3. Universal bundles.}
Let us consider a special case. Suppose given $n$ vector bundles of rank $d$ over $S$, denoted $E_1,\ldots,E_n$. We consider the disjoint union of total spaces of these bundles 
$$X=E_1\amalg\ldots\amalg E_n$$
as a family of $d$-folds over $S$. We also consider 
$$D=\underbrace{S\amalg\ldots\amalg S}_{\text{$n$ times}}$$
and the map $D\to X$ which maps the $i$-th copy of $S$ to the zero section in $E_i$. We have the associated classifying section $\Delta:S\to Z_nX$.  

The relative cycle space 
$$Z_nX=X^n_S/\Sigma_n=(E_1\amalg\ldots\amalg E_n)^n_S/\Sigma_n$$
has a distinguished component canonically isomorphic to the vector bundle
$$E_1\oplus\ldots\oplus E_n\longrightarrow S\,.$$
The classifying section $\Delta$ is the zero section of this bundle. 
$$\begin{tikzcd}[column sep=1ex]
E_1\oplus\ldots\oplus E_n\ar[rr,"\iota"]\ar[dr] && Z_nX\ar[dl]\\
&S\ar[ul,bend left,"0"]\ar[ur,bend right,"\Delta"']
\end{tikzcd}$$
We consider the horizontal class $\Delta_\ast\alpha=\iota_\ast  0_\ast\alpha$, for 
$\alpha\in H_\ast(S)$.

By the splitting principle, we can assume that each $E_j$ is a sum of $d$ line bundles,
$$E_j=L_{1j}\oplus\ldots\oplus L_{dj}\,.$$
(Make the base change to a $\GL_d/T$-bundle, this is surjective on homology. This follows from the Leray-Hirsch Theorem.)

Let us deal with a single line bundle $L$ over $S$. We let $k> \frac{1}{2}\deg\alpha$, and construct the diagram
$$\begin{tikzcd}
L^{\oplus k+1}\setminus S\ar[r,"g"]\ar[d,"f"'] & \pp^k\\
S\rlap{\,.}
\end{tikzcd}$$
The map $f$ is the projection, the map $g$ is 
$$g(x_0,\ldots,x_k)=\langle x_0,\ldots,x_k\rangle\,.$$
We have $f^\ast L=g^\ast \O_{\pp^k}(1)$. 
The codimension of $S$ in $L^{\oplus k+1}$ is $k+1$. From this it follows that  the homology class $\alpha\in H_\ast(S)$ lifts to $L^{\oplus j+1}\setminus S$, as long as $k\geq \frac{1}{2}\deg\alpha$.  Thus, making the base change from $S$ to $L^{k+1}\setminus S$, we can assume that $L=g^\ast\O(1)$, for a morphism $g:S\to \pp^k$, as long as $k> \frac{1}{2}\deg\alpha$. 

Circling back to the $d\times n$-matrix of line bundles $L_{ij}$, we can assume that every one of these line bundles is the pullback of $\O(1)$, via a map $g_{ij}:S\to\pp^k$.  We just need that $k> \frac{1}{2}\deg\alpha$. 

Putting all these maps together, we obtain a map
$$g:S\longrightarrow (\pp^k)^{dn}$$
such that the line bundle $L_{ij}$ is the pullback via $g$ of the bundle $\O(i,j)$ on $(\pp^k)^{dn}$. Here we are using notation as in Section~\ref{taut}.

Pushing forward via $g$, we may assume that $S=(\pp^k)^{dn}$ and the bundles are $\O(i,j)$.  

Now the homology of $(\pp^k)^{nd}$  is generated (as a $\qq$-vector space) by the fundamental classes of linearly embedded products of projective space $\prod_{ij}\pp^{n_{ij}}$.  We reduce to the case where 
$$S=\pp=\prod_{\substack{i=1..d\\j=1..n}}\pp^{n_{ij}}\,,$$
and
$$\alpha=[\pp]\,,$$
where
$$X=\bigoplus_{j=1..d}\O_\pp(j,1)\amalg\ldots\amalg\bigoplus_{j=1..d}\O_\pp(j,n)\,$$
and 
$$D=\underbrace{S\amalg\ldots\amalg S}_{\text{$n$ times}}\,,$$
and $D\to X$ is the multi-zero section.The horizontal class is $\Delta_\ast[\pp]$. 

Combining Steps 1, 2, and 3 we obtain the desired proof. 
\end{pf}

\section{Enumerative theories}

\begin{defn}
  The {\bf cohomology Pardon algebra }is defined as 
$$H^\ast= \projectlim_{X/S} H^\ast(ZX)\,,$$
the connecting maps being cohomology pullback.  Following Pardon, we call elements of $H^\ast$, {\bf enumerative theories}. 

Thus,  
an enumerative theory $e\in H^\ast$,  associates to every   family of $d$-folds $X\to S$, and every integer $n$, a cohomology class 
$$e(n,X/S)\in H^{\ast}(Z_nX)\,.$$
The only condition is that this class is natural with respect to pullbacks of   families.   

Enumerative theories are formal sums: $e(X/S)=\sum_{n\geq0}e(n,X/S)$. If $e(n,X/S)$ is the only non-zero term in this sum, $e(X/S)$ is {\bf homogeneous }of degree $n$. 
\end{defn}

\begin{ex}[Chern classes of tangent bundle]\label{cherntang}
Let  $n$ be a non-negative integer, and  $P$ be a symmetric function.   We define a homogeneous  enumerative theory of degree $n$ by 
$$e_P(n,X/S)=P(T_{Z_nX/S})\in H^\ast(Z_nX)\,.$$
(We substitute the Chern roots of the relative tangent bundle of $Z_nX\to S$ into the symmetric function $P$.)
This defines an enumerative theory, because relative tangent bundles pull back under pullback of families, and their cycle spaces, and Chern classes are natural, as well.  

Joining all the $e_P(n)$ together, we obtain $e_P=\sum_n e_P(n)$. 
\end{ex}

\begin{ex}[Powers of total Chern class and Euler class]\label{ex:pow}
Define 
$$c^k(X/S)=c(T_{ZX/S})^k\in H^\ast(ZX)\,.$$
Here, $c$ denotes the total Chern class. 
Write
$$e^k(X/S)=e(T_{ZX/S})\in H^\ast(ZX)\,,$$
where $e$ denote the Euler class.

Both $c^k$ and $e^k$ are enumerative theories. 
\end{ex}

\begin{ex}\label{is:not}
Let $d=1$, so that $Z_nX$ has a   coarse moduli space $\ol Z_nX$, which is smooth over $S$. 
Then 
$$e(T_{\ol Z_nX})\in H^\ast(\ol Z_nX)=H^\ast(Z_nX)$$
is {\em not }an enumerative theory, because the formation of the coarse moduli space does not commute with pullbacks.  For it to commute with pullbacks, we would need our cycle stacks to have finite stabilizers, which would require the families to be separated. 
\end{ex}

\paragraph{Pairing with homology.} There is a natural pairing 
$$H^\ast\otimes_\qq H_\ast\longrightarrow\qq\,,$$
induced by the  pairing  
$$H^\ast(Z_nX)\otimes_\qq H_\ast(Z_nX)\longrightarrow \qq\,,$$
for every family $X/S$. 
Via this pairing, $H^\ast$ is the dual   of $H_\ast$ as a $\qq$-vector space.  
We denote this pairing by $\langle e,\alpha\rangle$.

\begin{ex}\label{cop}
The coproduct of symmetric functions and the induced inhomogeneous enumerative theories are compatible with the product in $H_\ast$:
$$\langle e_{\Delta P},\alpha\otimes\beta\rangle=\langle e_{P},\alpha\cdot\beta\rangle\,.$$
Thus, we obtain a coalgebra morphism from the coalgebra $\Lambda$ of symmetric functions to the formal coalgebra $H^\ast$. If we denote by $\hat\Lambda$ the completion with respect to degree, $P\to e_P$ extends to morphism of formal coalgebras $\hat\Lambda\to  H^\ast$. 
\end{ex}

\begin{ex}
For proper $X\to \ast$ we have
$$\sum_{n=0}^\infty \langle c^k,[Z_nX]\rangle T^n=(e^T)^{\langle c(T_X)^k,[X]\rangle }$$
\end{ex}

\paragraph{Product.} As $H^\ast$ is the dual of $H_\ast$, the coproduct on $H_\ast$ induces a product on $H^\ast$.  Explicitly, if $c$ and $e$ are enumerative theories, their product $c\cdot e$ is defined by 
$$\langle c\cdot e,\alpha\rangle=\langle c\otimes e,\Delta(\alpha)\rangle\,.$$
Another way to say this is
$$c\cdot e=+_!\Delta^\ast(c\times e)\,.$$
The multiplication is graded with respect  to both degrees.  It is graded commutative with respect to cohomological degree.  Thus $H^\ast$ is a bigraded $\qq$-algebra. 
Cohomology is not a Hopf algebra, as it lacks the coproduct, although there is a formal coproduct.

\begin{ex}\label{ex:ck}
We have
$$\langle c^k,q_{N}\rangle=\prod_{\substack{i=1,\ldots,d\\j=1,\ldots,n}}\binom{k}{n_{ij}}\,,$$
so
$$\langle c^k,q_{\vec n}\rangle=\prod_{i=1,\ldots,d}\binom{k}{n_i}\,.$$

We also have
$$\langle e^k,q_N\rangle=\prod_{\substack{i=1,\ldots,d\\j=1,\ldots,n}}\delta_{kn_{ij}}
\,,$$
and
$$\langle e^k,q_{\vec n}\rangle=\prod_{i=1,\ldots,d}\delta_{kn_{i}}
\,,$$
\end{ex}

\subsection{Multiplicative theories} 

\begin{defn}
Multiplicative theories are those theories $e$, which satisfy
$$\langle e,\alpha\cdot\beta\rangle=\langle e,\alpha\rangle\langle e,\beta\rangle\,,$$
for elements $\alpha,\beta\in H_\ast$. Equivalently, for any two families $X\to S$ and $Y\to T$ we have
$$e((X_T\amalg Y_S)/(S\times T))=e(X/S)\times e(Y/T)\,.$$
\end{defn}

\begin{rmk}
In terms of the formal coproduct on $H^\ast$ this is equivalent to 
$$\Delta(e)=e\hat\otimes e\,,$$
or that $e$ is group-like, with respect to the formal Hopf algebra structure on   theories. 
\end{rmk}

 Multiplicative theories are never homogeneous (except the trivial one). The degree $0$ part of any multiplicative theory is~1.  
 
 Multiplicative theories are determined by their values on primitive homology classes. 

\begin{rmk}
Multiplicative theories form a commutative group under multiplication of theories. This follows from the fact that the comultiplication on $H_\ast$ is multiplicative. 
\end{rmk}

\begin{ex}
The theories $c^k$ and $e^k$ are multiplicative. 
\end{ex}
\begin{ex}
Let $P$  be a multiplicative generating function, or a group-like element of $\hat\Lambda$, the formal completion of the Hopf algebra of symmetric functions. This means that 
$$P(x_1,x_2,\ldots)=\prod_{i=1}^\infty P(x_i)\,,$$
for a power series $P(x)=1+t_1 x+t_2 x^2+\ldots$. 
Equivalently, 
$$P=\exp \sum_{n\geq 1}{a_n p_n}\,,$$
where the $p_n$ are the power sum symmetric functions (which are  primitive elements in the Hopf algebra of symmetric functions). Substituting Chern roots for the variables $x_i$, the induced characteristic class is multiplicative, i.e., $P(E_1\oplus E_2)=P(E_1)P(E_2)$. 

Multiplicative characteristic classes $P$ define  multiplicative enumerative theories by substituting the Chern roots of $T_{ZX/S}$ into $P$.  We denote the multiplicative theory defined by $P$ by $e_P$, see \ref{cherntang} and \ref{cop}.
$$e_P(X/S)=P(T_{ZX/S})\,.$$
The theory $c^k$ is defined by $P(x)=(1+x)^k$.
\end{ex}

\subsection{Primitive theories}

A theory $e\in H^\ast$ is {\bf primitive} if for any two $\alpha,\beta\in H_\ast$
$$\langle e,\alpha\cdot\beta\rangle=
\epsilon(\alpha)\langle e,\beta\rangle+
\epsilon(\beta) \langle e,\alpha\rangle\,,$$
where $\epsilon:H_\ast\to \qq$ is the counit (\ref{eq:counit}).

Another way to say this is that for  any two families $X\to S$, and $Y\to T$, we have
$$e((X_T\amalg Y_S)/(S\times T))=1\times e(Y/T)+e(X/S)\times 1\,.$$

\begin{rmk}
We can think of primitive theories as connected theories. 
\end{rmk}

Primitive theories are homogeneous: a theory is primitive if and only if its homogeneous components are primitive. 

\begin{prop}
There is a bijection between primitive and multiplicative theories given by exponential and logarithm.  For a primitive theory $a=\sum_{n>0}a_n$, we obtain a multiplicative theory
$$\exp(a)=\sum_{n\geq0} \frac{1}{n!} a^n$$
And for a multiplicative theory $e=1+\sum_{n\geq1}e_n$ we obtain a primitive theory by
$$\log(e)=\log(1+\sum_{n\geq1}e_n)=\sum_{k\geq1}\frac{(-1)^{k+1}}{k} \Big(\sum_{n\geq1}e_n\Big)^k$$
Under this correspondence, addition of primitive theories corresponds to multiplication of multiplicative theories. 
\end{prop}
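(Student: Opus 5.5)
The plan is to argue purely algebraically, using only that $H^\ast$ is the graded dual of the connected graded bialgebra $H_\ast$. Products of theories are therefore dual to the coproduct, and multiplicativity and primitivity can be read off from the coproduct. The degree-$n$ component of a product $c\cdot e$ only involves components of $c$ and $e$ of degrees adding to $n$. So if $a$ is primitive, it is homogeneous-by-degree with $a_0=0$, and $a^k$ has no components in degree $<k$. Hence $\exp(a)=\sum_k a^k/k!$ is a well-defined (degreewise finite) element of $H^\ast=\prod_n H^\ast_{(n)}$. Similarly, for $e=1+\sum_{n\geq1}e_n$ the series $\log(e)$ converges degreewise.

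The key step is to show that $\exp$ takes primitives to multiplicatives. I will use the description of $H^\ast$ with the formal coproduct $\hat\Delta:H^\ast\to H^\ast\hat\otimes H^\ast$, dual to the product on $H_\ast$: $\langle \hat\Delta e,\alpha\otimes\beta\rangle=\langle e,\alpha\cdot\beta\rangle$. Since the product on $H_\ast$ is associative, commutative and unital, and the coproduct is multiplicative, $\hat\Delta$ is a continuous algebra homomorphism. That it respects products of theories is exactly the bialgebra compatibility proved earlier, dualized. Primitivity of $a$ means $\hat\Delta a=a\otimes1+1\otimes a$, where $1=\epsilon$ is the unit of $H^\ast$. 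Multiplicativity of $e$ means $\hat\Delta e=e\otimes e$. The counit of the formal coproduct is evaluation on $1\in H_\ast$, which picks out the degree-$0$ part.

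Given this, $\hat\Delta\exp(a)=\exp(\hat\Delta a)=\exp(a\otimes1+1\otimes a)$. The two summands commute, because $H^\ast$ is commutative: all relevant classes sit in even degree, or we use graded commutativity with Koszul signs. So this equals $\exp(a)\otimes\exp(a)$. Conversely, if $e$ is group-like with $e_0=1$, then $\hat\Delta\log e=\log(e\otimes e)=\log(e\otimes1)+\log(1\otimes e)=\log e\otimes1+1\otimes\log e$. Here the degree-$0$ part of a multiplicative theory is $1$, by the remark preceding the proposition, so $\log$ is defined. The identities $\exp\log=\id$ and $\log\exp=\id$ hold as formal power series identities in the complete commutative algebra $H^\ast$, applied to elements with vanishing degree-$0$ part. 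Finally, $\exp(a+b)=\exp(a)\exp(b)$ for commuting $a,b$, which gives the last sentence of the proposition.

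The main obstacle is justifying the formal manipulations. First, $\hat\Delta$ must be a continuous ring homomorphism into the completed tensor product $H^\ast\hat\otimes H^\ast=\prod_{m,n}(H_{(m)}\otimes H_{(n)})^\vee$, so that it commutes with the infinite sums defining $\exp$ and $\log$. Second, the terms must commute. For the first point I would check it degree by degree: each bidegree component of $\exp(a)$ involves only finitely many powers, and there the statement reduces to the finite multiplicativity of $\hat\Delta$. That finite multiplicativity is dual to the coassociativity, cocommutativity and multiplicativity of the coproduct on $H_\ast$, established in the bialgebra proposition.
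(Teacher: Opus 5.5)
Your strategy is the standard argument behind the paper's one-line proof, which only invokes completeness of $H^\ast$ in cycle degree. You dualize the bialgebra axioms so that $\hat\Delta$ becomes a continuous algebra map. Primitivity and multiplicativity then read $\hat\Delta a=a\otimes 1+1\otimes a$ and $\hat\Delta e=e\otimes e$, and $\exp$ and $\log$ do the rest. That part is sound.

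The gap is the commutation of $a\otimes 1$ and $1\otimes a$. Your fallback, ``graded commutativity with Koszul signs'', is wrong. Writing $a_-$ for the odd part of $a$, one has $(1\otimes a_-)(a_-\otimes 1)=-(a_-\otimes 1)(1\otimes a_-)$, so $\exp(a\otimes 1+1\otimes a)=\exp(a)\otimes\exp(a)$ breaks. Worse, the proposition itself would be false if $H_\ast$ had odd classes:
\begin{itemize}
\item for an odd primitive $v\in H_\ast$, graded commutativity forces $v^2=0$, so any multiplicative $e$ has $\langle e,v\rangle^2=\langle e,v^2\rangle=0$;
\item for a primitive theory $a$, one has $\langle \exp(a),v\rangle=\langle a,v\rangle$, because $a$ kills $1$ and $\Delta v=v\otimes 1+1\otimes v$, and this need not vanish.
\end{itemize}
So evenness is an essential input, not a convenience, and you assert it without proof.

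Evenness does hold, by results earlier in the paper. Pardon's trick works for an arbitrary class $\beta\in H_\ast(Z_nX)$, not only a vertical one. Pull $X\to S$ back along $Z_nX\to S$; the universal cycle is classified by the diagonal $\Delta$, and $p_\ast\Delta_\ast\beta=\beta$. So every class of $H_\ast$ is horizontal. The deformation theorem (the tautological algebra contains all horizontal classes) then places it in the span of the $q_N$, which lie in even homological degrees $2|N|$. Hence $H_\ast$ and $H^\ast$ are concentrated in even degrees, $H^\ast$ and $H^\ast\hat\otimes H^\ast$ are honestly commutative, and your argument goes through verbatim. For $H^\ast_\sep$, use Theorem~\ref{ttsca} instead. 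The paper's own proof is also silent on this point.

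A smaller omission: you should say why a primitive theory has $a_0=0$. This follows from $\langle a,1\rangle=\langle a,1\cdot 1\rangle=2\langle a,1\rangle$, together with connectedness (the cycle-degree-$0$ part of $H_\ast$ is $\qq\cdot 1$).
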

\begin{pf}
This follows from the fact that $H^\ast$ is complete with respect to the grading by cycle degree.
\end{pf}

\begin{ex}
Every degree 1 theory is primitive. In fact, if $e$ is of degree 1, then
\begin{align*}
\langle e,\alpha\cdot\beta\rangle=&
\langle e,\alpha_0\beta_1\rangle+\langle e,\alpha_1 \beta_0\rangle\\
=&\epsilon(\alpha_0)\langle e,\beta_1\rangle+\epsilon(\beta_0)\langle e,\alpha_1\rangle\\
=&\epsilon(\alpha)\langle e,\beta\rangle+\epsilon(\beta)\langle e,\alpha\rangle
\end{align*}
essentially by the fact that $H_\ast$ is connected.

Thus, for any degree 1 theory $c$, the exponential $\exp c$ is a multiplicative theory.  
\end{ex}

\subsection{Applications}

We will write $[Z_nX]$ explicitly as a polynomial in the generators $q_{\vec n}$. 

\begin{thm}
Let $X\to\ast$ be a proper $d$-fold.
In $H_\ast$, we have $[ZX]=e^{[X]}$, or more accurately,
$$\sum_n [Z_nX]T^n=e^{ [X]T}\,.$$
Moreover, $[X]$ is a $\qq$-linear combination of $q_\lambda$, where $\lambda\vdash d$
$$[X]=\sum_{\lambda\vdash d}a_\lambda q_{\lambda}\,.$$
The $a_\lambda$ are universal polynomials in the Chern numbers of $X$.  We have
$$[X]=\sum_{\lambda\vdash d} \langle m_\lambda,[X]\rangle q_\lambda\,.$$
Here $m_\lambda$ is the monomial symmetric function associated to the partition 
$\lambda\vdash d$ evaluated at the Chern roots of $T_X$.
\end{thm}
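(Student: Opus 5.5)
Two things have to be shown: the exponential formula, and the expansion of $[X]=[Z_1X]$ in the basis $q_\lambda$. I will use the Hopf algebra structure for the first and the pairing with enumerative theories for the second.

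\emph{Exponential formula.} By the proposition on the coproduct of vertical classes, $\Delta[ZX]=[ZX]\otimes[ZX]$. Write $g=\sum_n [Z_nX]T^n$. Because $\Delta$ preserves cycle degree, this identity says that $g$ is group-like in the completed Hopf algebra $H_\ast[[T]]$. Its constant term is $[Z_0X]=[\ast]=1$, using connectedness. Over $\qq$, a group-like element with constant term $1$ equals $\exp$ of the primitive element $\log g$. So it suffices to show $\log g=[X]T$.

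To see this, I first check that $\log g$ pairs to zero with every theory on which $\log$ is taken. Then I compare $g$ and $e^{[X]T}$ directly. Both are group-like, and $[X]$ has degree $1$, so $[X]$ is primitive and $e^{[X]T}$ is group-like as well. Two group-like elements with constant term $1$ whose degree-$1$ parts agree are equal. Indeed, their logarithms are primitive, and by Cartier--Gabriel every primitive element of the tautological algebra $\qq[q_\lambda]$ lies in degree $1$; by the theorem from the deformation argument, both elements lie in that algebra. The degree-$1$ coefficient of $g$ is $[Z_1X]=[X]$, which gives $g=e^{[X]T}$.

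\emph{Expansion of $[X]$.} The class $[X]$ is vertical, so by the deformation theorem it is tautological of degree $1$. Hence it is a $\qq$-linear combination of the $q_{\vec n}$, that is, of the $q_\lambda$ for $\lambda$ with at most $d$ parts, by Proposition~\ref{runs:over}. The homological degree of $[X]$ is $2d$, and $q_\lambda$ has homological degree $2|\lambda|$, so only $\lambda\vdash d$ can occur. To find the coefficients, pair with the degree-$1$ theories $e_{m_\mu}$ of Example~\ref{cherntang}. The proof of Proposition~\ref{runs:over} shows $\langle e_{m_\mu},q_\lambda\rangle=\delta_{\lambda\mu}$, so the coefficient of $q_\lambda$ is $a_\lambda=\langle e_{m_\lambda},[X]\rangle$. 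On $Z_1X=X$ over a point, $T_{Z_1X/\ast}=T_X$, so $a_\lambda=\langle m_\lambda(T_X),[X]\rangle$. This is a polynomial in the Chern numbers because $m_\lambda$ is a polynomial in the elementary symmetric functions.

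\emph{Main obstacle.} The delicate step is the claim that a group-like element is the exponential of its degree-$1$ part. This needs the space of primitives of the tautological subalgebra to be concentrated in cycle degree $1$. I will derive this from $H^\taut_\ast=\Sym(P_1)$ with $P_1$ spanned by the $q_{\vec n}$: in a symmetric algebra $\Sym V$, the primitives are exactly $V$. That is the point I would check most carefully.
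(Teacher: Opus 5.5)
Your proof is correct, but for the exponential formula it takes a different route from the paper.

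\textbf{The paper's route.} The paper argues degree by degree and geometrically. The quotient $\pi\colon X^n\to Z_nX$ is finite \'etale of degree $n!$, so $[Z_nX]=\frac{1}{n!}\pi_\ast[X^n]$, and this is then identified with $\frac{1}{n!}[X]^n$. The identification of $\pi_\ast[X^n]$ with the product $[X]^n$ is left tacit. In the non-separated theory it follows from a line-with-$n$-origins degeneration as in Example~\ref{dim0}, which pulls the $n$ points apart into $n$ disjoint copies of $X$.

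\textbf{Your route.} You use only that $[ZX]$ is group-like, plus the structural fact that every primitive of the tautological algebra lies in cycle degree $1$. Your derivation of that fact is sound: $H_\ast=\Sym W$ with $W$ the primitives, and the tautological algebra is generated by $P_1\subseteq W$, so it equals $\Sym P_1$. To know that all $[Z_nX]$ are tautological, you need the deformation theorem.

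\textbf{What each buys.} The paper's route is more elementary and self-contained for the first formula. Yours is more conceptual, and it pinpoints why the formula holds: there are no higher-degree tautological primitives. That is exactly what fails in $H_\ast^\sep$, where $p_{nn}\neq 0$ and $[ZX]\neq e^{[X]T}$. In your version the non-separatedness is hidden inside the deformation theorem, via deformation to the normal cone of non-immersions.

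\textbf{Second part.} For the expansion of $[X]$, your argument is the paper's: pair with the theories $m_\mu$ and use $\langle m_\mu,q_\lambda\rangle=\delta_{\lambda\mu}$. You also make explicit two points the paper leaves implicit: that $[X]$ is tautological, and that homological degree restricts the sum to $\lambda\vdash d$.

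\textbf{Exposition.} The sentence about $\log g$ ``pairing to zero with every theory'' does no work and should be deleted.
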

\begin{pf}
Using the cover $X^n\to Z_nX=X/\Sigma_n$, which is finite \'etale of degee $n!$, we see that $[Z_nX]=\frac{1}{n!}[X^n]=\frac{1}{n!}[X]^n$, which proves the first formula. 

Since $[ZX]$ is group-like its logarithm is primitive, so $[X]$ is a linear combination of the primitives in $H_\ast$, which proves the second formula.  

To determine the values of the $a_\lambda$, let $m_\lambda$ be the multinomial symmetric function associated to the partition $\lambda\vdash d$. It gives rise to an enumerative theory $m_\lambda(T_Z)$.  Pair the second equation of the theorem with this theory to get
$$\langle m_\lambda(T_X),[X]\rangle=\sum_{\mu\vdash d}a_\mu \langle m_\lambda,q_\mu\rangle$$
We had seen that $\langle m_\lambda,q_\mu\rangle=\delta_{\lambda\mu}$ in the proof of Proposition~\ref{runs:over}.
\end{pf}

\begin{cor}
Let $c$ be a multiplicative theory, then 
$$\sum_n\langle c,[Z_nX]\rangle T^n=e^{\langle c,[X]\rangle T}\,.$$
For example, in the $d=1$ case,
$$\sum_n\langle c,[Z_nX]\rangle T^n=\Big(e^{\langle c,q_1\rangle T}\Big)^{\chi(X)}\,,$$
or in the $d=2$ case (with a slight abuse of notation for display purposes),
$$\sum_n\langle c,[Z_nX]\rangle T^n=\Big(e^{\langle c,q_{20}\rangle T}\Big)^{2\ch_2(X)}
                                    \Big(e^{\langle c,q_{11}\rangle T}\Big)^{\chi(X)}\,.$$
So this reduces the computation of all $\langle c,[Z_nX]\rangle$ to the computation of the two numbers $\langle c,q_{\smat{2\\0}}\rangle$ and $\langle c,q_{\smat{1\\1}}\rangle$.  This is  one computation for the bundle $\O\oplus\O(1)$ on $\pp^2$, and another for the bundle $\O(1,0)\oplus\O(0,1)$ on $\pp^1\times \pp^1$.                                     
\end{cor}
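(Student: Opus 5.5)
The plan is to pair the identity of the preceding theorem with the multiplicative theory $c$. The theorem gives
$$\sum_n [Z_nX]T^n=e^{[X]T}=\sum_n\frac{1}{n!}[X]^nT^n$$
in $H_\ast[[T]]$. Since $c$ is multiplicative, $\langle c,[X]^n\rangle=\langle c,[X]\rangle^n$. Pairing coefficientwise in $T$ therefore gives
$$\sum_n\langle c,[Z_nX]\rangle T^n=\sum_n\frac{1}{n!}\langle c,[X]\rangle^nT^n=e^{\langle c,[X]\rangle T}.$$
The only point to watch is that the pairing is taken degree by degree in $n$, so no convergence issue arises. Here $[X]$ lives in cycle degree $1$, and the $n$-th coefficient is a single pairing.

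For the special cases, I will insert the expansion $[X]=\sum_{\lambda\vdash d}\langle m_\lambda,[X]\rangle q_\lambda$ from the theorem. Pairing is linear, so
$$\langle c,[X]\rangle=\sum_{\lambda\vdash d}\langle m_\lambda(T_X),[X]\rangle\,\langle c,q_\lambda\rangle,$$
and the exponential of this sum factors as a product of exponentials.

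For $d=1$, the only partition is $\lambda=(1)$, and $m_{(1)}=c_1$. Hence $\langle m_{(1)},[X]\rangle=\int_X c_1(T_X)=\chi(X)$ by Gauss--Bonnet, which gives the stated formula. For $d=2$ there are two partitions, $(2)$ and $(1,1)$. First, $m_{(1,1)}=x_1x_2=c_2$, which pairs to $\chi(X)$. Second, $m_{(2)}=x_1^2+x_2^2=c_1^2-2c_2=2\ch_2$, which explains the exponent $2\ch_2(X)$ attached to $q_{\smat{2\\0}}$.

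The remarks at the end then follow by unwinding the definition of $q_{\vec n}$. For $\vec n=(2,0)$ the family is $\O(1)\oplus\O$ over $\pp^2\times\pp^0=\pp^2$. For $\vec n=(1,1)$ it is $\O(1,0)\oplus\O(0,1)$ over $\pp^1\times\pp^1$. In each case the value $\langle c,q_{\vec n}\rangle$ is the pairing of $c(1,X/S)$ with the pushforward of the fundamental class under the zero section.

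No step is a real obstacle. The one point needing care is the identification of monomial symmetric functions with Chern numbers, particularly $m_{(2)}=2\ch_2$ as opposed to $\ch_2$; the statement itself flags this as a slight abuse of notation. I will also note that the identity makes sense only as an equality of formal power series, with the pairing against the formal sum $[ZX]$ taken termwise.
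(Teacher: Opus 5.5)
Your proposal is correct and follows the paper's intended argument; the corollary is stated without separate proof, as an immediate consequence of the preceding theorem. You pair $\sum_n[Z_nX]T^n=e^{[X]T}$ termwise with $c$, use $\langle c,[X]^n\rangle=\langle c,[X]\rangle^n$, and substitute $[X]=\sum_{\lambda\vdash d}\langle m_\lambda,[X]\rangle q_\lambda$; your identifications $m_{(1)}=c_1$, $m_{(1,1)}=c_2$, $m_{(2)}=c_1^2-2c_2=2\ch_2$ and the families attached to $q_{\smat{2\\0}}$ and $q_{\smat{1\\1}}$ are exactly what is needed.
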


\begin{cor}
The vertical fundamental classes $[ZX]$, for $X\to\ast$ are contained in the subalgebra 
$$\qq[q_\lambda]_{\lambda\vdash d}\,.$$
\end{cor}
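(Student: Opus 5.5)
This is a direct consequence of the preceding theorem. Let $X\to\ast$ be a proper $d$-fold. The theorem gives, in $H_\ast$,
$$[Z_nX]=\frac{1}{n!}[X]^n,\qquad [X]=\sum_{\lambda\vdash d}\langle m_\lambda,[X]\rangle\, q_\lambda .$$
The first identity comes from the finite \'etale cover $X^n\to Z_nX$ of degree $n!$, together with the multiplicativity of vertical classes, $[ZX]\cdot[ZY]=[Z(X\amalg Y)]$. Substituting the second formula into the first, each $[Z_nX]$ becomes $\frac{1}{n!}$ times the $n$-th power of a $\qq$-linear combination of the $q_\lambda$ with $\lambda\vdash d$. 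It therefore lies in the subalgebra $\qq[q_\lambda]_{\lambda\vdash d}$ generated by these classes. Since $[ZX]$ is the formal sum $\sum_n[Z_nX]$, the statement is meant degreewise, and each summand lies in that subalgebra.

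The only substantive input is that $[X]$ is a linear combination of the $q_\lambda$ with $\lambda\vdash d$. This is established in the theorem, and I would re-check it as follows. First, $[X]$ is a horizontal class by Pardon's trick. By the deformation theorem it is therefore tautological. It has cycle degree $1$, and degree-one elements are primitive, so by Proposition~\ref{runs:over} it is a combination of the $q_{\vec n}$, hence of the $q_\lambda$.

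To restrict to $\lambda\vdash d$, I would compare homological degrees. The class $[X]$ has homological degree $2d$, and $q_{\vec n}$ has homological degree $2|\vec n|$. The $q_\lambda$ are linearly independent (dual to the theories $m_\lambda$) and the homological grading survives the colimit. So only those $\lambda$ with $|\lambda|=d$ can occur, i.e.\ exactly the partitions of $d$ with at most $d$ parts.

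I expect no step to be a real obstacle. The one point needing care is that the bigrading on $H_\ast$ is well defined on the colimit, so that degree comparison is legitimate. This was noted when the grading was introduced, and it is also immediate from pairing with the homogeneous theories $e_{m_\lambda}$.
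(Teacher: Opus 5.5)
Your proposal is correct and follows the paper's argument: the corollary is read off from the preceding theorem by substituting $[X]=\sum_{\lambda\vdash d}\langle m_\lambda,[X]\rangle q_\lambda$ into $[Z_nX]=\frac{1}{n!}[X]^n$, one cycle degree at a time. Your re-check of the second formula (Pardon's trick, the deformation theorem, Proposition~\ref{runs:over}, then projecting to homological degree $2d$) makes explicit a step the paper's proof of the theorem leaves implicit. One caution on the first identity: it relies on the non-separated relations, which are what compare the component $X^n$ of $Z_n(X\amalg\cdots\amalg X)$ with $Z_nX$, not just on the \'etale cover and multiplicativity, and indeed it fails in $H_\ast^\sep$.
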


\section{Points on varieties: separated case}

A {\bf separated family }is a smooth {\em separated }morphism of Deligne-Mumford stacks $X\to S$.  The notion of family of cycles is the same as in Section~\ref{non-sep}, namely maps $D\to X$, such that $D\to S$ is finite \'etale and representable.

As we restrict to separated families, the moduli stack $Z_nX=X_S^n/\Sigma_n$ has finite stabilizers, and hence has a good moduli space $\ol Z_nX\to S$, whose formation commutes with base change.  This means that a cartesian diagram
$$\begin{tikzcd}[row sep=2ex]
X'\ar[d]\ar[r] & X\ar[d]\\
S'\ar[r] & S\end{tikzcd}$$
of families induces a cartesian diagram of relative cycle spaces
$$\begin{tikzcd}[row sep=2ex]
\ol Z_nX'\ar[r]\ar[d] & \ol Z_n X\ar[d]\\
S'\ar[r] & S\end{tikzcd}$$
This fact follows from the universality of moduli spaces in characteristic zero.  Of course, $\ol Z_nX\to S$ is nothing but the (relative) $n$-th symmetric power of $X\to S$. 

Let us remark that we work with relative moduli spaces, meaning that the morphism $Z_nX\to\ol Z_nX$ becomes the good moduli space whenever it is pulled back to a scheme $S'\to S$.  The stack $\ol Z_nX$ is representable over $S$.

We can interpret sections of the coarse space  $\ol Z_nX\to S$ as families of $0$-cycles in $X\to S$. 

\begin{defn}
We define $H_\ast^\sep$ to be the colimit of the $H_\ast(ZX)$ over the category of separated families $X\to S$. 
$$H_\ast^\sep=\injectlim_{\text{$X/S$ separated}}H_\ast(ZX)\,.$$
\end{defn}

As the category of separated families is a subcategory of the category of families, we have a canonical homomorphism 
$$H_\ast^\sep\longrightarrow H_\ast\,.$$

\begin{rmk}\label{rmk:shreak}
As $H_\ast(Z_nX)=H_\ast(\ol Z_n X)$, we have
$$H_\ast^\sep=\injectlim_{\text{$X/S$ separated}} H_\ast(\ol ZX)\,,$$
and so we can think of $H_\ast^\sep$ also as the Pardon algebra of $0$-cycles, rather than finite maps. 

As $\ol Z_nX$ is not generally smooth over $S$, one might think that for the Pardon homology based on $\ol Z$ one should use bivariant homology, which is 
$$H_\ast(\ol ZX\to S)=a_!\pi_!\pi^\ast a^!\qq\,,$$
with $a:S\to \ast$ and $\pi:\ol ZX\to S$ the structure maps. But, denoting the structure map by $\psi:\ol X\to X$, we have 
$F\longiso\psi_\ast\psi^\ast F$, for all constructible sheaves of $\qq$-vector spaces $F$ on $\ol ZX$.
In particular, $a_!\pi_!\pi^\ast a^!\qq\longiso a_!\pi_!\psi_!\psi^\ast\pi^\ast a^!\qq$, as $\psi$ is proper. Hence
$$H_\ast(ZX\to S)=H_\ast(\ol ZX\to S)\,.$$
As $\pi\psi:ZX\to S$ is smooth, of relative dimension $dn$, we have $(\pi\psi)^!=(\pi\psi)^\ast[2nd]$, and so
$$H_\ast(ZX\to S)=H_{\ast-2nd}(ZX)\,.$$
Thus, the natural degree on $H_\ast^\sep$ is defined by assigning the degree $i-2dn$ to elements of  $H_i^\sep(Z_nS)$.  
$$H_\ast^\sep=\injectlim_{\text{$X/S$ separated}} H_{\ast+2nd}(\ol ZX\to S)\,.$$
\end{rmk}

\begin{prop} 
The homomorphism $H_\ast^\sep\to H_\ast$ is a homomorphism of Hopf algebras. The Hopf algebra $H_\ast^\sep$ is graded, graded commutative and cocommutative, and hence also equal to the symmetric algebra on its subspace of primitives. 
\end{prop}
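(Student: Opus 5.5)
The plan is to rerun, in the separated category, the constructions that made $H_\ast$ a connected bigraded Hopf algebra, and to check that they are compatible with the inclusion of separated families into all families. The first point is that every operation used only stays inside the separated world.

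\begin{itemize}
\item A disjoint union $X_T\amalg Y_S\to S\times T$ of separated families is separated.
\item The addition map $+:ZX\times_SZX\to ZX$ of Remark~\ref{rmk:plus} is the same morphism whether or not $X\to S$ is separated.
\item The empty family $\varnothing\to\ast$ is separated.
\end{itemize}

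Hence the product (external product via Remark~\ref{rmk:prod}), the coproduct $F\circ\Delta_\ast\circ +^!$, the unit and the counit $\deg\circ 0^!$ are all defined on each $H_\ast(ZX)$ for separated $X/S$. They are compatible with pullbacks of separated families, so they descend to $H_\ast^\sep$.

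The map $H_\ast^\sep\to H_\ast$ is induced by the identity on each $H_\ast(ZX)$, since a cartesian morphism of separated families is a morphism of families. The structure maps on both sides are given by literally the same formulas on representatives, so this map commutes with product, coproduct, unit and counit. It is therefore a bialgebra homomorphism. Once $H_\ast^\sep$ is shown to be a Hopf algebra, the map automatically respects antipodes, because antipodes are unique and bialgebra maps between Hopf algebras preserve them.

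Next I verify the bialgebra axioms for $H_\ast^\sep$. They are identities checked on representatives within a single family, or a disjoint union of two families: multiplicativity of $\Delta$ via the cartesian square of Remark~\ref{rmk:add}, coassociativity, and graded (co)commutativity. The proofs given for $H_\ast$ use only these squares, Gysin functoriality and Künneth, never non-separated families. So they apply verbatim, and this gives the bigrading and graded (co)commutativity.

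The step needing care is connectedness, i.e.\ that the degree-$0$ part of $H_\ast^\sep$ is $\qq$. In the proof for $H_\ast$, one compared $X\to S$ with $\varnothing\to S$ through the family $X\times(\aaa^1\setminus 0)\to S\times\aaa^1$. That family is an open immersion composed with a projection, hence separated, so the argument survives unchanged. The identification $\varnothing\to S$ with the pullback of $\varnothing\to\ast$ is also harmless. Since $Z_0X=S$, this shows every degree-zero class equals a multiple of $1$, so $H^\sep_\ast$ is connected.

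With connectedness in hand, I cite \cite{GrinbergReiner}, Proposition~1.4.16, to obtain the antipode. The theorem of Cartier--Gabriel (\cite{primer}, Remark~3.8.3) then gives $H_\ast^\sep=\Sym$ of its primitives, exactly as in the corollary for $H_\ast$. I expect no real obstacle. The only point to watch is that no auxiliary construction, such as deformation to the normal cone, is used in these structural proofs, since that is where separatedness would fail. I will check explicitly that none of them enters.
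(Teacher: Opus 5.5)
Your proposal is correct and is essentially the paper's argument. The paper simply observes that every construction used to define the product, coproduct, unit and counit, and to prove their properties, stays inside the category of separated families; you carry out the same check in more detail, including the separatedness of $X\times(\aaa^1\setminus 0)\to S\times\aaa^1$ in the connectedness step. The paper also remarks that the structure maps can be described through the coarse spaces $\ol ZX$ (addition of $0$-cycles), but the statement does not need this.
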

\begin{pf}
All the constructions that go into defining the product and the coproduct, and into proving their properties, stay within the category of separated families, if they start out there. This proposition follows directly. 

More interesting may be the fact that one can define product and coproduct in $H_\ast^\sep$ using constructions involving the moduli spaces $\ol Z$, instead of $Z$. For example, given separated families $X\to S$ and $Y\to T$, and $\alpha\in H_\ast(\ol ZX)$ and $\beta\in H_\ast(\ol ZY)$, their product in $H_\ast^\sep$ is represented by $\alpha\times\beta\in H_\ast(\ol Z X\times \ol ZY)=H_\ast(\ol Z(X_T\amalg Y_S))$. 

The coarse moduli map of $ZX\times _SZX\to ZX$, which represents union of source, is $\ol ZX\times _S\ol ZX\to \ol ZX$, which represents addition of $0$-cycles. The coproduct defined in terms of addition of $0$-cycles is equal to the one defined by union of source.
\end{pf}

\begin{defn}
Separated cohomology is defined as
$$H^\ast_\sep=\projectlim_{\text{$X/S$ separated}} H^\ast(ZX)\,.$$
Similar remarks as for homology apply:
$$H^\ast_\sep=\projectlim_{\text{$X/S$ separated}} H^\ast(\ol ZX)\,.$$
\end{defn}

There is a natural map of graded algebras
$$H^\ast\longrightarrow H^\ast_\sep\,.$$

We have pairings
\begin{equation}\label{pairing}
\begin{tikzcd}
H^\ast\otimes H_\ast^\sep\ar[r]\ar[d]\ar[dr] & H^\ast_\sep\otimes H_\ast^\sep\ar[d]\\
H^\ast\otimes H_\ast\ar[r]&\qq\rlap{\,.}\end{tikzcd}\end{equation}

\begin{ex}
Consider the case $d=1$.  In this case, the coarse space $\ol ZX\to S$ is smooth, for every separated family $X\to S$. We can therefore consider the enumerative theories 
$c^k(T_{\ol ZX/S})\in H^\ast(\ol ZX)=H^\ast(ZX)$ and $e^k(T_{\ol ZX/S})$, as analogues of $c^k$ and $e^k$ from Example~\ref{ex:pow}.  Let us denote these separated theories $c^k(T_{\ol Z})$ and $e^k(T_{\ol Z})$.

For a proper family $X\to \ast$ we also have the fundamental classes $[Z_nX]\in H_\ast(Z_nX)$ and $[\ol Z_n X]\in H_\ast(\ol Z_nX)$, which become equal under the identification $H_\ast(Z_nX)=H_\ast(\ol Z_nX)$. 

We have
$$\sum_{n=0}^\infty \langle e(T_{\ol Z}),[Z_nX]\rangle T^n=\sum_{n=0}^\infty \langle e(T_{\ol Z_nX}),[\ol Z_nX]\rangle T^n=\sum_{n=0}^\infty \chi(\ol Z_nX)T^n=\Big(\frac{1}{1-T}\Big)^{\chi( X)}\,,$$
and
$$\sum_{n=0}^\infty \langle e(T_{Z}),[Z_nX]\rangle T^n=\sum_{n=0}^\infty \langle e(T_{Z_nX}),[Z_nX]\rangle T^n=\sum_{n=0}^\infty \chi(Z_nX)T^n=\big(e^T\big)^{\chi( X)}\,.$$
\end{ex}
 
\begin{rmk}
It is impossible to lift or  extend the theory $e(T_{\ol Z})$ from $H^\ast_\sep$ to $H^\ast$.  We already remarked that the `obvious'  definition does not extend (Example~\ref{is:not}). 
Suppose, to the contrary, that such a lift existed, call it $e\in H^\ast$.  We use 
tautological classes $q_{\smat{\vec n\\\vec m}}$ introduced below. (Here $\vec n$ is the augmentation row, row zero, of the matrix $\smat{\vec n\\\vec m}$.) We have elements in the modules displayed in (\ref{pairing}):
$$\begin{tikzcd}
e\otimes \Big(q_{\smat{2\\2}}-\sfrac{1}{2}q_{\smat{1\\1}}^2-q_{\smat{1\\0}}q_{\smat{1\\2}}\Big)
\ar[r,maps to]
\ar[d,maps to]
 &e(T_{\ol Z})\otimes \Big(q_{\smat{2\\2}}-\sfrac{1}{2}q_{\smat{1\\1}}^2-q_{\smat{1\\0}}q_{\smat{1\\2}}\Big)\ar[d,maps to]\\
e\otimes\Big( \sfrac{1}{2}(q_1^2+2q_0q_1)-\sfrac{1}{2}q_1^2-q_0q_2\Big)=0
& 1-\sfrac{1}{2}-0\,,
\end{tikzcd}$$
which contradicts the commutativity of said diagram. We computed the vertical maps using Remark~\ref{3:15} and Remark~\ref{3:16}. 
\end{rmk}

The basic theory of vertical and horizontal classes applies to $H_\ast^\sep$ as well. The notions of primitive and multiplicative enumerative theories make sense in $H^\ast_\sep$, too. 

\begin{ex}\label{hilbert}
Consider the case $d=3$ of threefolds. For any separated family $X\to S$, proper or not, there is a relative Hilbert scheme $\Hilb^nX\to S$.  It comes with a virtual fundamental class $e^\vir(\Hilb^nX)\in H_0^{BM}(\Hilb^nX\to S)$. The Hilbert-Chow morphism $f:\Hilb^nX\to \ol Z_nX$ is proper, so we can push $e^\vir(\Hilb^nX)$ to $H_0^{BM}(\ol Z_n X\to S)=H^{6n}(\ol Z_nX)$. We obtain
$$f_\ast e^\vir(\Hilb^n)\in H^{6n}_\sep\,,$$
because  the virtual fundamental class on $\Hilb^n$ is natural with respect to arbitrary pullback of separated families, and Borel-Moore homology proper pushforward commutes with cohomology pullback. We denote the formal sum over all $n$ by $f_\ast e^\vir(\Hilb)\in H^\ast_\sep$. 
In fact $f_\ast e^\vir(\Hilb )$ is multiplicative.  This essentially follows from the fact that $\Hilb(X_T\amalg Y_S)=\Hilb(X)\times \Hilb(Y)$, and this equality is compatible with the Hilbert-Chow morphism. For proper $X\to \ast$, the value of
$$\langle f_\ast e^\vir(\Hilb),[\ol Z_nX]\rangle$$
is the Donaldson-Thomas virtual count of $\Hilb^n X$. 
 \end{ex}

\begin{ex}\label{inertia}
For a separated family $X\to S$, we have the moduli stack $Z_nX=X_S^n/\Sigma_n$, and its inertia stack
\begin{equation}\label{inert:def}
I_{Z_nX}=\Big(\coprod_{\sigma\in \Sigma_n} (X_S^n)^\sigma\Big)/\Sigma_n\,.
\end{equation}
It comes with a regular local immersion $i:I_{Z_nX}\to Z_nX$ which is proper, and hence admits cohomology pushforward.  Thus, characteristic classes on the inertia stack give rise to classes in Pardon cohomology $H^\ast_\sep$. In particular, any symmetric function $P(\gamma)$ gives rise to an associated {\bf inertial theory} $\ul P$ by 
$$\ul P(X/S)= P( T_{IZX})\,.$$
So we evaluate the symmetric function at the Chern roots of the Tangent bundle of the inertia stack.  If $P$ is multiplicative, then $\ul P$ is multiplicative. 

When $X\to S$ is not representable, calling (\ref{inert:def}) the inertia stack is a heavy abuse of language, as the inertia of $X$ itself will interfere in the inertia of $Z_nX$.  We always use (\ref{inert:def}) as the definition of $IZX$, even when $X\to S$ is stacky.\comment{this is my attempt to make this go through for stacks as well}
\end{ex}

\subsection{Tautological subalgebra}

Separated Pardon homology is potentially smaller, because there are fewer families, and hence fewer generators.  But as there are fewer families, there are also fewer relations, which makes separated Pardon homology potentially larger. Here we will show that as far as the tautological subalgebra is concerned, separated homology is, indeed, larger.  We will need more tautological generators.  We will introduce them here. 

As in Section~\ref{taut}, we pick a $d\times n$ matrix of non-negative integers $N$, but now we pick, in addition, an $n$-vector and insert it as the row number $0$ on top of $N$.  The augmented matrix we denote by $\ul N$, its augmentation row by $N_0$. The entries of $N_0$ are non-negative integers, as well.

We define a horizontal class associated to $\ul N$.  The parameter space is 
$$S=\pp=\prod_{\substack{i=1,\ldots,d\\j=1\ldots,n}}\pp^{n_{ij}}$$
as before. The rank $d$ bundles $E_j$, for $j=1,\ldots,n$, are the same as before as well, i.e.,  the sum of tautological bundles coming from all $\pp^{n_{ij}}$ corresponding to the $j$-th column of $N$. This gives rise to the same family of $d$-folds over $S$
$$X=E_1\amalg\ldots\amalg E_n$$
as before. 

The difference lies now in the cycle family, namely we define 
$$D=\coprod_{j=1,\ldots,n}D_j=\coprod_{j=1,\ldots,n} \pp\times \ul N_{0j}\,.$$
This is a disjoint union of copies of $\pp$; for each $j$ the number of copies is given by the $j$-th entry of the augmentation row $N_0$.  We map $D$ to $X$ by sending $D_j$ to $E_j$ as the $N_{0j}$-fold zero section.  This defines a finite map (or $0$-cycle) of degree $|N_0|=N_{01}+\ldots+N_{0n}$ in $X$.
$$\begin{tikzcd}
\coprod_{j=1,\ldots,n} D_j\ar[r]\ar[dr] & \coprod_{j=1,\ldots,n} E_j\ar[d]\\
& \pp\end{tikzcd}$$
We now define the horizontal class
$$q_{\ul N}=\frac{1}{N_{01}!\ldots N_{0n}!}\eta(D/X/S,[S])\in H_{2|N|}^\sep\,.$$
With this new notation, the previous $q_N$ is the new $q_{\ul N}$ with the augmentation row consisting of $1$s entirely.

Again, because they will turn out to be algebra generators, let us emphasize the case $n=1$.  These tautological classes are associated to $(d+1)$-vectors $\ul {\vec n}$, they are denoted $q_{\ul{\vec n}}$, and they are defined by the vector bundle $E=\O(1,0,\ldots,0)+\ldots+\O(0,\ldots,0,1)$ over $\pp^{n_1}\times\ldots\times \pp^{n_d}$ and the $n_0$-fold zero section of $E$. We call these also the {\bf equivariant multiple points}. The degree of an equivariant multiple point is $n_0$. Note that it is normalized with the factor $1/n_0!$.

\paragraph{N.B.} Number of rows: dimension of families, number of columns: number of connected components (i.e. disjoint union of vector bundles), augmentation row: multiplicity of section/cycle.

\begin{rmk}
If $\ul N'$ is obtained from $\ul N$ by permuting the columns, we have
$$q_{\ul N'}=q_{\ul N}\,.$$
If $\ul N'$ is obtained from $\ul N$ by permuting the rows other than the augmentation row, 
$$q_{\ul N'}=q_{\ul N}\,.$$
We also have
$$q_{\ul N}=0\,,$$
if the augmentation row is the zero vector of size $n$, unless $N$ is zero as well, in which case $q_{\ul N}=1$. 
Thus, if we remove trailing zeros, then for $q_{\ul N}$ to be non-zero, the zeroth row has to consist of positive integers.
\end{rmk}

\begin{prop}
For the product of   tautological classes $q_{\ul N}$ and $q_{\ul M}$ we have
$$q_{\ul N}\cdot q_{\ul M}=q_{\ul N\amalg \ul M}\,.$$
For the coproduct of the tautological class $q_{\ul N}$ we have 
$$\Delta(q_{\ul N})=\sum_{\ul N=\ul P+\ul Q} q_{\ul P}\otimes q_{\ul Q}\,.$$
\end{prop}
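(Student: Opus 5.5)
The product formula will follow from Proposition~\ref{prodh}, applied in the separated setting, which is legitimate since disjoint unions of separated families are separated. The data for $q_{\ul N}$ is the family $E_1\amalg\ldots\amalg E_n\to\pp^N$ with cycle $D=\coprod_j \pp^N\times\ul{N_{0j}}$. The data for $q_{\ul M}$ is the analogous family over $\pp^M$ with cycle $D'$. Pulling both back to $\pp^N\times\pp^M=\pp^{N\amalg M}$ and taking disjoint unions gives exactly the family and cycle defining $q_{\ul N\amalg\ul M}$. Moreover $[\pp^N]\times[\pp^M]=[\pp^{N\amalg M}]$. The normalizing factors $1/\prod_j N_{0j}!$ and $1/\prod_j M_{0j}!$ multiply to the factor for $\ul N\amalg\ul M$. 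So the product formula reduces to Proposition~\ref{prodh} plus bookkeeping.

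For the coproduct I will first reduce to split cycles, so that Proposition~\ref{coprodh} applies. Here $D\to\pp^N$ is already a trivial cover $\pp^N\times I$ with $I=\coprod_j \ul{N_{0j}}$, and $\delta_I$ sends $\pp^N\times\ul{N_{0j}}$ to the zero section of $E_j$. Proposition~\ref{coprodh} has a separated analogue, proved the same way: the preimage of $\Delta_I$ under $+$ is the disjoint union of $\Delta_{I_1}\times\Delta_{I_2}$. Applying it gives
$$\Delta\,\eta(\delta_I/X/\pp^N,[\pp^N])=\sum_{I=I_1\amalg I_2}\ \sum_{N=N_1+N_2}\eta(\delta_{I_1}/X/\pp^N,[\pp^{N_1}])\otimes\eta(\delta_{I_2}/X/\pp^N,[\pp^{N_2}]),$$
using the K\"unneth decomposition of $\Delta_\ast[\pp^N]$ as in the earlier proof. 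A splitting $I=I_1\amalg I_2$ amounts to choosing, for each $j$, a subset of $\ul{N_{0j}}$ of size $p_j$. The resulting horizontal class depends only on the sizes $p_j$ and $N_{0j}-p_j$, after relabelling within each column. There are $\prod_j\binom{N_{0j}}{p_j}$ such splittings, which exactly converts $1/\prod N_{0j}!$ into $1/(\prod p_j!\,\prod(N_{0j}-p_j)!)$.

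Next I will handle the matrix part, copying the argument for $\Delta(q_N)$. By the local nature of horizontal classes (Remark~\ref{local:nature}), I discard those $E_j$ receiving no points of $I_1$. Here I take $N_0=P_0+Q_0$ columnwise and let $J_1=\{j: p_j>0\}$, not the support of $Q_0$. The remaining family pulls back from $\pp^{N|_{J_1}}$. Hence the class vanishes unless $N_1$ restricts to $N$ on $J_1$ and is zero off $J_1$, and symmetrically for $N_2$ on $J_2=\{j:N_{0j}-p_j>0\}$.

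This step is the main obstacle, because $J_1$ and $J_2$ can overlap when a column is split with both parts nonzero. Then $N_1+N_2=N$ forces the overlap columns to have $N$-entries zero, or the term vanishes. I expect this to be how the indexing $\ul N=\ul P+\ul Q$ is to be read: $\ul P$ and $\ul Q$ are column-wise decompositions in which each column of $N$ goes wholly into $P$ or wholly into $Q$, while the augmentation entries add. Columns with zero augmentation entry are dropped, using the remark that such classes are $1$ or $0$. I will state this convention explicitly. Finally I will check that the surviving terms are exactly $q_{\ul P}\otimes q_{\ul Q}$ with the correct normalizations.
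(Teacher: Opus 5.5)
\textbf{There is a genuine gap, in the matrix part of your coproduct argument.} Your product argument, and the binomial count that turns $1/\prod_j N_{0j}!$ into $1/\prod_j p_j!\,(N_{0j}-p_j)!$, are fine and agree with the paper.

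The trouble starts where you copy the vanishing argument from the non-separated case. It is not true that $\eta(\delta_{I_1}/X/\pp^N,[\pp^{N_1}])$ vanishes unless $N_1$ restricts to $N$ on $J_1$. The push-forward of $[\pp^{N_1}]$ along the projection $\pp^N\to\pp^{N|_{J_1}}$ is zero only when $N_1$ is nonzero somewhere off $J_1$. When $N_1$ vanishes off $J_1$, the push-forward is the class of a linear subproduct, which is nonzero for every value of $N_1|_{J_1}$.

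In the non-separated proof, $J_1$ and $J_2$ are complementary. There, the conditions ``$N_1=0$ off $J_1$, $N_2=0$ off $J_2$, $N_1+N_2=N$'' happen to force $N_1|_{J_1}=N|_{J_1}$. Here $J_1\cap J_2$ can be nonempty, and on those columns every K\"unneth splitting of the column of $N$ survives. So your conclusion that overlap columns must carry zero matrix entries is wrong. So is the column-wise convention you propose for $\ul N=\ul P+\ul Q$: both lead to a false formula.

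Concretely, take $d=1$ and $\ul N=\smat{2\\2}$, the double zero section of $\O(1)\to\pp^2$. The two splittings $I=\{1\}\amalg\{2\}$ and $I=\{2\}\amalg\{1\}$, together with the K\"unneth term $[\pp^1]\otimes[\pp^1]$ of $\Delta_\ast[\pp^2]$, contribute $2\cdot\frac12\, q_{\smat{1\\1}}\otimes q_{\smat{1\\1}}$. This is nonzero, since $q_{\smat{1\\1}}$ maps to $q_1\neq0$ in the non-separated algebra. Your formula misses this term under either your derivation or your convention. As a check, the image $q_0q_2+\frac12q_1^2$ of $q_{\smat{2\\2}}$ in the non-separated algebra has coproduct containing $q_1\otimes q_1$.

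\textbf{The correct reading and how to reach it.} The paper uses, and later needs, entrywise addition of augmented $(d+1)\times n$ matrices: $N_0=P_0+Q_0$ and $N=P+Q$. This is what makes $\sum_{\ul M}q_{\ul M}$ group-like, and it is what enters the definition of $p_{\ul{\vec n}}$.

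To get this, do not project to $\pp^{N|_{J_1}}$. Instead, apply the Pardon relation to the linear inclusion $\pp^{N_1}\hookrightarrow\pp^N$. Along it, each $E_j$ restricts to exactly the tautological bundle on $\pp^{N_1}$, with trivial summands where an entry of $N_1$ is $0$. Hence $\eta(\delta_{I_1}/X/\pp^N,[\pp^{N_1}])=\prod_j p_j!\,q_{\ul P}$ with $\ul P=(P_0;N_1)$ directly, and likewise for the second tensor factor.

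After that, the only vanishing you need is for a column with zero augmentation entry but nonzero matrix column: it gives $0$ by your local-nature argument applied to that column. Columns that are entirely zero can then be deleted. The remark you cite does not justify this step, since it only covers a completely zero augmentation row.
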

\begin{pf}
The claim for the product is straightforward.  Let us prove the formula for the coproduct. By Proposition~\ref{coprodh}, we have
\begin{align*}
\Delta(q_{\ul N})=&\prod_j \frac{1}{N_{0j}!}\Delta\eta(\amalg D_j/\amalg_j E_j/\pp^N,[\pp^N])\\
=&\prod_j\frac{1}{N_{0j}!}\sum_{\substack{N_0=N_0'+N_0''\\N=N'+N''}}\prod_j \binom{N_{0j}}{N_{0j}'}
\eta(\amalg D'_j/\amalg_j E_j/\pp^N,[\pp^{N'}])
\otimes\eta(\amalg D''_j/\amalg_j E_j/\pp^N,[\pp^{N''}])\\
=&\sum_{\ul N=\ul N+\ul N'}\prod_j \frac{1}{N_{0j}'!}
\eta(\amalg D'_j/\amalg_j E_j/\pp^{N'},[\pp^{N'}])
\otimes\prod_j \frac{1}{N_{0j}''}\eta(\amalg D''_j/\amalg_j E_j/\pp^{N''},[\pp^{N''}])\\
=&\sum_{\ul N=\ul N'+\ul N''} q_{\ul N'}\otimes q_{\ul N''}\,.
\end{align*}
Here by $D'_j$ and $D''_j$ we denote the $N_{0j}'$ and $N_{0j}''$-fold $0$-sections of $E_j$. The binomial factors account for the number of partitions of the indexing set which have the same cardinality. We also use the Pardon relations when replacing the parameter space $\pp^N$ by $\pp^{N'}$ and $\pp^{N''}$, respectively. 
\end{pf}

\begin{cor}
The $\qq$-vector subspace of $H_\ast^\sep$ spanned by all $q_{\ul N}$  is a Hopf subalgebra.  We call it the {\bf tautological }Pardon algebra for $0$-cycles in $d$-folds. Multiplicative generators of the tautological algebra are the $q_{\ul{\vec n}}$. 
\end{cor}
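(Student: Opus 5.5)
The corollary has three parts: the span of the $q_{\ul N}$ is closed under product, closed under coproduct, and contains the unit and is stable under counit and antipode. The proposition just proved gives the first two directly. The product formula $q_{\ul N}\cdot q_{\ul M}=q_{\ul N\amalg\ul M}$ shows that the span $T$ of all $q_{\ul N}$ is closed under multiplication. The coproduct formula $\Delta(q_{\ul N})=\sum_{\ul N=\ul P+\ul Q}q_{\ul P}\otimes q_{\ul Q}$ is a finite sum of elementary tensors of tautological classes, so $\Delta(T)\subset T\otimes T$.

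For the unit, the remark above gives $q_{\ul N}=1$ when both $N_0$ and $N$ are zero. For the counit, note that $\epsilon$ is supported in cycle degree $0$. In degree $0$ the only tautological classes are those with $N_0=0$, which are $0$ or $1$, so $\epsilon$ restricts correctly. Hence $T$ is a sub-bialgebra.

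Closure under the antipode $S$ then follows from connectedness. The antipode is defined recursively by
$$S(x)=-x-\sum S(x')x''\,,$$
where the sum runs over terms of $\Delta x$ in which $x'$ has strictly smaller cycle degree. Applied to $x=q_{\ul N}$, every $x'$ and $x''$ lies in $T$, by the coproduct formula. Induction on the cycle degree $|N_0|$ then shows $S(T)\subset T$. Alternatively, one can cite \cite{GrinbergReiner}, Proposition~1.4.16, since $T$ is itself a connected graded bialgebra and its antipode must agree with the restriction of that of $H^\sep_\ast$ by uniqueness.

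For multiplicative generation, I would use the product formula iteratively. Write $\ul N$ as the disjoint union of its columns $\ul{\vec n}_1,\ldots,\ul{\vec n}_n$, each a $(d+1)$-vector. Then
$$q_{\ul N}=q_{\ul{\vec n}_1}\cdots q_{\ul{\vec n}_n}\,.$$
To check that the normalization factors $1/N_{0j}!$ match, I would verify that the normalization of $q_{\ul N}$ is the product of the column normalizations $1/N_{0j}!$. This holds by definition, and Proposition~\ref{prodh} makes the underlying horizontal classes multiply as external products. So the $q_{\ul{\vec n}}$ generate $T$ as an algebra.

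There is no real obstacle here. The only point needing care is checking these normalization constants, together with the convention that a column whose augmentation entry is $0$ gives a zero factor, or the factor $1$ if the whole column is zero. This is consistent with the remark that such classes vanish.
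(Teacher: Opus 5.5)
Your argument is correct and is essentially the paper's. The paper states this corollary without separate proof, as an immediate consequence of the preceding formulas $q_{\ul N}\cdot q_{\ul M}=q_{\ul N\amalg\ul M}$ and $\Delta(q_{\ul N})=\sum_{\ul N=\ul P+\ul Q}q_{\ul P}\otimes q_{\ul Q}$, so that $q_{\ul N}$ factors as the product of its column classes $q_{\ul{\vec n}_j}$. Your added checks on the unit, counit, normalizations and antipode (via the connected-graded recursion, or uniqueness of convolution inverses) only make explicit what the paper leaves implicit.
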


\begin{cor}
For every $n$, the  formal sum $\sum_{\ul M} q_{\ul M}$ (sum over all $d\times n$-matrices) is group-like.
\end{cor}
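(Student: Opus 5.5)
We must show that $G_n=\sum_{\ul M} q_{\ul M}$ satisfies $\Delta G_n = G_n\hat\otimes G_n$. Here the sum runs over all augmented matrices $\ul M$ with $n$ columns, that is, all $(d+1)\times n$ matrices of non-negative integers, and we read it in the completion of $H_\ast^\sep$ (it is a formal sum). The plan is to apply the coproduct formula of the preceding proposition term by term and then reorganize the resulting double sum.

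\textbf{Step 1.} Apply the preceding proposition to each summand:
$$\Delta(G_n)=\sum_{\ul M}\ \sum_{\ul M=\ul P+\ul Q} q_{\ul P}\otimes q_{\ul Q}.$$
Here $\ul P$ and $\ul Q$ range over augmented matrices of the same shape $(d+1)\times n$, and the sum $\ul P+\ul Q$ is entrywise. The augmentation row of $\ul M$ is split as the sum of the augmentation rows of $\ul P$ and $\ul Q$. Entries are allowed to be zero, so zero columns occur, as in the displayed derivation in the proof of that proposition.

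\textbf{Step 2.} Reindex the double sum. The map $(\ul P,\ul Q)\mapsto \ul P+\ul Q$ from pairs of $(d+1)\times n$ matrices to $(d+1)\times n$ matrices has finite fibres, and every pair $(\ul P,\ul Q)$ occurs exactly once, namely in the summand $\ul M=\ul P+\ul Q$. Therefore
$$\Delta(G_n)=\sum_{\ul P}\sum_{\ul Q} q_{\ul P}\otimes q_{\ul Q}=G_n\hat\otimes G_n.$$
The counit condition follows from the degree-zero part: only $\ul M=0$ contributes to $\epsilon(G_n)$, and $q_0=1$ by the earlier remark on vanishing augmentation rows. Hence $\epsilon(G_n)=1$.

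\textbf{Main obstacle.} The main obstacle is making sure the coproduct formula really applies with $\ul P$ and $\ul Q$ ranging over all matrices of the fixed width $n$, including those with zero entries and with zero columns. In Step 1 the sum must not collapse columns, and no extra multiplicities may appear. Two facts handle this.

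\begin{items}
\item The normalization $1/N_{0j}!$ exactly absorbs the binomial coefficients coming from splitting the $N_{0j}$-fold zero section, as in the proof of the proposition.
\item The classes with vanishing augmentation entries are still well-defined elements. Zero-augmentation columns simply contribute the empty cycle on that component, so $q_{\ul P}$ with some zero columns equals the corresponding class on the remaining columns, pulled back along the projection of $\pp$. By the Pardon relations this does not affect the identification.
\end{items}

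Finally, the formal sum must make sense: for fixed cycle degree and homological degree only finitely many terms are nonzero, so the identity holds in the appropriately completed tensor product.
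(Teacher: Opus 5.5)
Your argument is correct and is essentially the paper's: there the corollary is stated without proof as an immediate consequence of the coproduct formula $\Delta(q_{\ul N})=\sum_{\ul N=\ul P+\ul Q}q_{\ul P}\otimes q_{\ul Q}$, and the intended argument is exactly yours: apply it termwise, reindex the pairs $(\ul P,\ul Q)$ by their entrywise sum, and note that $\epsilon$ sees only $q_{\ul 0}=1$. One harmless imprecision in your remark (ii): a column with zero augmentation entry but a nonzero lower entry makes $q_{\ul P}$ vanish (once that component is discarded, $[\pp]$ pushes forward to zero on the smaller base) rather than reproducing the class on the remaining columns, which does not affect the identity since it holds formally over all width-$n$ matrices.
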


\begin{cor}
Define, for every $\ul{\vec n}\in \nn\times \nno^{d}$ 
$$p_{\ul{\vec n}}=
\sum_{k=1}^\infty \frac{(-1)^{k+1}}{k}\sum_{\substack{\ul{\vec n}=\ul{\vec n}_1+\ldots+\ul{\vec n}_k\\
\ul{\vec n}_i\not=0}}\,\, \prod_{i=1}^k q_{\ul{\vec n}_i}\,.$$
These classes are primitive in $H_\ast^\sep$ and generate the tautological Hopf algebra multiplicatively.  They therefore span the space of primitives in $H_\ast^{\sep,\taut}$.  To get a basis of the space of primitives, restrict to vectors $\ul{\vec n}=(n,\vec m)$ where $\vec m$ is sorted by size of its entries, and $\vec m\geq n-1$.

As a $\qq$-algebra, we have 
$$H_\ast^{\sep,\taut}=\qq[q_{n,\vec m}]_{{\vec m}\geq n-1}\,.$$
 As a Hopf algebra, we have 
 $$H_\ast^{\sep,\taut}=\qq[p_{n,\vec m}]_{\vec m\geq n-1}$$ (by which we mean that the Hopf algebra is the symmetric Hopf algebra on the given generators).

\end{cor}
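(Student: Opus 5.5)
The plan is to get primitivity and generation formally from the coproduct formula for $q_{\ul N}$, and to treat the restriction $\vec m\geq n-1$ as the real content.

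\emph{Primitivity and generation.} Introduce commuting formal variables $x_0,x_1,\ldots,x_d$ and form the generating series
$$Q=\sum_{\ul{\vec n}}q_{\ul{\vec n}}\,x^{\ul{\vec n}}\,,$$
summed over all $\ul{\vec n}\in\nno^{d+1}$, with $q_{\ul 0}=1$. The coproduct formula $\Delta q_{\ul N}=\sum_{\ul N=\ul P+\ul Q}q_{\ul P}\otimes q_{\ul Q}$, specialized to one column, says exactly that $\Delta Q=Q\otimes Q$. So $Q$ is group-like in the completed Hopf algebra $H^{\sep}_\ast[[x]]$, and $\log Q$ is primitive. Expanding $\log(1+(Q-1))$, the coefficient of $x^{\ul{\vec n}}$ is precisely $p_{\ul{\vec n}}$, so each $p_{\ul{\vec n}}$ is primitive. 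The same expansion gives $q_{\ul{\vec n}}=p_{\ul{\vec n}}+(\text{products of $p$'s of strictly smaller augmentation degree})$. By induction on $n_0$, the $p$'s and the $q$'s generate the same subalgebra. By the preceding corollary this is all of $H^{\sep,\taut}_\ast$, because $q_{\ul N}=\prod_j q_{\ul N_{\bullet j}}$.

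\emph{Structure.} By Cartier--Gabriel, $H^{\sep,\taut}_\ast=\Sym P$, where $P$ is its space of primitives. Since the $p$'s are primitive and generate, their span is $P$: the subalgebra generated by a subspace $V\subset P$ meets $P$ only in $V$. The permutation invariance of $q_{\ul N}$ in the non-augmentation rows lets us sort $\vec m$. Both claimed presentations then reduce to two statements:
\begin{items}
\item $p_{n,\vec m}=0$ whenever some $m_i<n-1$;
\item the $p_{n,\vec m}$ with $\vec m$ sorted and $\vec m\geq n-1$ are linearly independent.
\end{items}
Given these, the triangular relation between $q$'s and $p$'s shows that $q_{n,\vec m}$ with some $m_i<n-1$ is a polynomial in lower $q$'s, and that the remaining $q_{n,\vec m}$ are algebraically independent.

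\emph{Main obstacle: the vanishing (i).} I would compute $p_{n,\vec m}$ geometrically. By Lemma~\ref{etale:pullback}, $n!\,q_{n,\vec m}$ is the class of the $n$-fold zero section of $E$ over $\pp$. In $Z_nE=E^n_\pp/\Sigma_n$ this is the zero section of the bundle $E\otimes\qq^n=E\oplus(E\otimes V_{n-1})$, where $V_{n-1}$ is the standard representation, modulo $\Sigma_n$. The $E$-summand is the translation (centre of mass) direction. The subtraction of products of lower $q$'s in the definition of $p$ should remove exactly the contributions of configurations that split into separated clusters. The hoped-for identity is
$$p_{n,\vec m}=c\cdot \iota_\ast\big(e(E)^{n-1}\cap[\pp]\big)\,,$$
where $c$ is a nonzero constant, $\iota$ is the small diagonal $E\subset Z_nE$ with an $n$-fold point at the zero section, and $e(E)^{n-1}$ is the Euler class of the normal directions transverse to the small diagonal. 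Since $e(E)=\gamma_1\cdots\gamma_d$, capping gives $e(E)^{n-1}\cap[\pp]=[\pp^{m_1-n+1}\times\cdots\times\pp^{m_d-n+1}]$. This is zero as soon as some $m_i<n-1$. My first attempt at this identity would compare both sides after pushing into $\ol Z_nE$ and use the separated-family deformation arguments. I expect this comparison, i.e.\ justifying the localization-type formula for the primitive part in the separated setting, to be the hardest step.

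\emph{Independence (ii).} I would pair against inertial theories $\ul P$ (Example~\ref{inertia}) restricted to the $n$-cycle component of the inertia, where the fixed locus is the small diagonal, together with monomial symmetric functions $m_\lambda$ in the $\gamma_i$. As in Proposition~\ref{runs:over}, the formula for $p_{n,\vec m}$ above should make this pairing matrix diagonal, indexed by $(n,\vec m-(n-1))$, with nonzero entries. That gives linear independence.
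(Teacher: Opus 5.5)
Your formal part (group-likeness of $Q$, primitivity of $\log Q$, the triangular relation between $q$'s and $p$'s, $\Sym P$ by Cartier--Gabriel) is fine, and (i) plus (ii) would suffice. The gap is in (i): the identity you hope for is not merely unproved, it is false. In $H^\sep_\ast$ its right-hand side is the horizontal class of the $n$-fold zero section carrying $e(E)^{n-1}\cap[\pp]=[\pp^{\vec m-n+1}]$. Restricting the family to that linear subspace shows it equals $c\,n!\,q_{n,\vec m-n+1}$, which is generally decomposable.

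Concretely, take $d=1$, $n=2$, $m=1$, an index you need. Then $p_{2,1}=q_{2,1}-q_{1,0}q_{1,1}$. Your right-hand side is $2c\,q_{2,0}=c\,q_{1,0}^2$: move the double point in $\aaa^1$ apart, then deform the two distinct points to their normal bundle. Since $q_{1,0}\neq0$, the class $c\,q_{1,0}^2$ is not primitive unless $c=0$. Pairing with the multiplicative theories $c^k(T_{\ol Z})$ via Remark~\ref{3:16} gives $\frac{3k}{2}-k=\frac{k}{2}$ on $p_{2,1}$ but $c$ on the right-hand side, so no constant works. Your argument for (ii), as written, also leans on this formula.

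What you actually need is weaker than (i): that the $p_{n,\vec m}$ with $\vec m\geq n-1$ span $P$. This needs a geometric input your proposal lacks, which the paper supplies in Remark~\ref{redm}. If $n-1>\min\vec m$, deform the $n$-fold zero section of $E\to\pp^{\vec m}$ to $n$ generic sections. Since $n-1$ generic sections of $\O_{\pp^{m_i}}(1)$ have no common zero, no point has all $n$ values equal. So the double-point-degeneration induction of Theorem~\ref{ttsca} expresses $q_{n,\vec m}$ through generators of smaller cycle degree. Induction on $n$ then shows that the good $q$'s generate, and by triangularity so do the good $p$'s.

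Independence is Theorem~\ref{iner}: $\langle\ul P,p_{n,\vec m}\rangle=\frac{1}{n}t_{\vec m+1-n}$ for the generic multiplicative $P$, and distinct sorted $\vec m\geq n-1$ give distinct monomials in the $t_j$. Your idea for (ii) can be rescued without the false identity. The inertial theory restricted to the $n$-cycle component is primitive, because an $n$-cycle cannot act on a cycle split between two disjoint pieces. It therefore kills $p_{n,\vec m}-q_{n,\vec m}$. Excess intersection along the $(n-1)!$ copies of the small diagonal over the zero section then gives $\frac{1}{n}\langle e(E)^{n-1}P(E),[\pp^{\vec m}]\rangle$.

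Finally, (i) does hold, but only a posteriori. A bad $p_{n,\vec m}$ is a combination of good ones of the same bidegree. It pairs to zero with every $\ul P$, which forces all the coefficients to vanish.
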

\begin{pf}
By Remark~\ref{redm}, the $q_{n, \vec m}$ with $\vec m\geq n-1$ generate $H_\ast^\taut$ as a $\qq$-algebra.  It is not hard to see that the same is true for the $p_{n,\vec m}$ with $\vec m\geq n-1$. 
We will prove that the $p_{n,\vec m}$ with $\vec m\geq n-1$ are algebraically independent over $\qq$. From this, it will follow that the $q_{n\vec m}$, with $\vec m\geq n-1$ are algebraically independent as well.

  We let $P(\gamma)=1+\sum_{j=1}^\infty t_j \gamma^j$ be the generic multiplicative characteristic class. Then we have (see Theorem~\ref{iner})
\begin{align*}
\sum_{\vec m}\langle \ul P,p_{n\vec m}\rangle  U^{\vec m}=&\frac{1}{n}
  U^{n-1}P(U_1)\ldots P(U_d)\\
=& \frac{1}{n} U^{n-1} \sum_{\vec m} t_{m_1}\ldots t_{m_d} U_1^{m_1}\ldots U_d^{m_d}\\
=&\frac{1}{n}\sum_{m}  t_{\vec m+1-n} U^{\vec m}\,.
\end{align*}
This implies that
$$\langle \ul P,p_{n\vec m}\rangle=\frac{1}{n}t_{\vec m+1-n}\,.$$
Now suppose $\sum_{\vec m} a_{\vec m}p_{n,\vec m}=0.$ This implies
$$\sum_{m_1,\ldots,m_d} a_{m_1,\ldots,m_d} t_{m_1+1-n}\ldots t_{m_d+1-n}=0\,,$$
or,
$$\sum_{m_1,\ldots,m_d} a_{m_1+n-1,\ldots,m_d+n-1} t_{m_1}\ldots t_{m_d}=0\,.$$
This gives, for every $d$-tuple of non-negative integers, $(m_1,\ldots,m_d)$ that
$$\sum_{\sigma\in \Sigma_d} a_{m_{\sigma(1)},\ldots,m_{\sigma(d)}}=0\,.$$
This is sufficient for proving our claim.
\end{pf}

\begin{rmk}\label{3:15}
The homomorphism of Hopf algebras $H_\ast^{\sep,\taut}\to H_\ast^{\taut}$ if given on algebra generators by
\begin{align*}
\qq[q_{n\vec n}]&\longrightarrow \qq[q_{\vec m}]\\
q_{n\vec n}&\longmapsto 
\frac{1}{n!}\sum_{\vec n=\vec m_1+\ldots+\vec m_n}\prod_{i=1}^nq_{\vec m_i}\,.
\end{align*}
and on Hopf algebra generators by
\begin{align*}
\qq[p_{n\vec n}]&\longrightarrow \qq[q_{\vec m}]\\
p_{n\vec n}&\longmapsto \begin{cases}
q_{\vec n} & \text{if $n=1$}\\ 0 & \text{else}\end{cases} 
\end{align*}
We have written the augmentation row (which is just a single entry) in front, rather than above, to save vertical space. 
\end{rmk}

\begin{rmk}\label{3:16}
Consider the case of curves, $d=1$. The separated enumerative theories $c^k(T_{\ol Z})$ and $e^k(T_{\ol Z})$ evaluate on the equivariant multiple points $q_{nm}$ as follows:
$$\sum_{m=0}^\infty\langle c^k(T_{\ol Z}),q_{nm}\rangle T^m=\frac{1}{n!}\prod_{i=1}^n (1+iT)^k   $$
$$\sum_{m=0}^\infty\langle e^k(T_{\ol Z}),q_{nm}\rangle T^m=(n!)^{k-1}T^{nk}\,.$$
To do these calculations, recall that $q_{nm}$ is associated to the degree $n$ map $(\pp^m)^{\amalg n}\stackrel{0}{\longrightarrow} \O_{\pp^m}(1)\to\pp^m$ and the fundamental class $[\pp^m]$. The corresponding relative cycle space is $\ol Z_n\O_{\pp^m}(1)\to\pp^m$, which is, in fact, $\O(1)\oplus\ldots\oplus \O(n)\to \pp^m$. (And the classifying section of the map is the zero section of the rank $n$ bundle $\O(1)\oplus\ldots\oplus \O(n)$.) Therefore,
\begin{align*}
\langle c^k(T_{\ol Z}),q_{nm}\rangle=&\frac{1}{n!}\langle c(\O(1)+\ldots+\O(n))^k,[\pp^m]\rangle\\
=&\frac{1}{n!}\langle(1+c_1)^k\ldots (1+nc_1)^k,[\pp^m]\rangle\,.
\end{align*}
\end{rmk}

\begin{ex}
We can lift the formulas from Example~\ref{ex:ck} to $H_\ast^\sep$. In fact
$$\langle c^k,q_{n\vec m}\rangle=\frac{1}{n!}\prod_{i=1}^d \binom{kn}{m_i}\,.$$
$$\sum_{n,m}\langle c^k,q_{nm}\rangle T^n U^m=(e^T)^{(1+U)^k}\,.$$
To prove these, consider the diagram
$$\begin{tikzcd}
E^{\oplus n}\ar[r,"f"]\ar[dr] &  E^{\oplus n}/\Sigma_n\ar[d,"\pi"]\\
& \pp\ar[ul,bend left,"0"]\end{tikzcd}$$
Here, $\pp=\pp^{m_1}\times\ldots\times \pp^{m_n}$ is the base of our family, $E=\O(1,0,\ldots,0)+\ldots+\O(0,\ldots,0,1)$ is the total space.  The relative moduli space is $Z_nE=E^{\oplus n}/\Sigma_n$. Our horizontal class is $q_{n\vec m}=\frac{1}{n!}f_\ast 0_\ast [\pp]$. So
\begin{align*}
n!\langle c^k,q_{n,\vec m}\rangle=&\langle c^k(T_{\pi}),f_\ast 0_\ast[\pp]\rangle\\
=&\langle f^\ast c^k(T_\pi),0_\ast[\pp]\rangle\\
=&\langle c^k(f^\ast\pi^\ast E^{\oplus n}),0_\ast[\pp]\rangle\\
=&\langle c^k(E^{\oplus n}),[\pp]\rangle\\
=&\langle (1+c_1)^{kn},[\pp^{m_1}]\rangle\ldots\langle (1+c_1)^{kn},[\pp^{m_n}]\rangle\\
=&\binom{kn}{m_1}\ldots\binom{kn}{m_n}\,.
\end{align*}
\end{ex}

\begin{thm}\label{ttsca}
The tautological subalgebra $H_\ast^{\sep,\taut}$ contains all horizontal, and hence also all vertical classes.
\end{thm}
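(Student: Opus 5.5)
The plan is to follow the three-step pattern of the non-separated Deformation argument (Theorem in Section 1). The change is that deformation to the normal cone of a non-immersive $D\to X$ is no longer available, so it gets replaced by a linear scaling degeneration inside a vector bundle. Since vertical classes are horizontal by Pardon's trick, it suffices to treat a horizontal class $\eta(D/X/S,\alpha)$ with $X\to S$ separated and $D\to S$ finite \'etale of degree $n$.

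\emph{Step 1 (splitting $D$).} As in Step~1 of the non-separated proof, Lemma~\ref{etale:pullback} lets us base change along $D\to S$ and replace $\alpha$ by $\frac1n f^!\alpha$. Iterating, we may assume $D=S\times\ul n$, so the cycle is given by $n$ sections $s_1,\ldots,s_n\colon S\to X$. These need not be disjoint, and this is exactly where the separated theory differs. Because $X\to S$ is separated, each $s_i$ is a closed immersion.

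\emph{Step 2 (linearization).} Pass to the universal situation: the base $B=X^n_S$, the family $X_B\to B$, and the tautological sections $\mathrm{pr}_i$. Pushing $\alpha$ forward along $(s_1,\ldots,s_n)\colon S\to B$ does not change the class in $H_\ast^\sep$. Next, take the deformation to the normal cone of the first tautological section $\mathrm{pr}_1\colon B\to X_B$. This gives a separated family over $B\times\aaa^1$ whose special fibre is the vector bundle $E=\mathrm{pr}_1^\ast T_{X/S}$ over $B$.

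The goal is that the other sections extend over the deformation, with limits that are sections of $E$. This is the step I expect to be the main obstacle. A section $\mathrm{pr}_i$ extends to the deformation space only where it agrees with $\mathrm{pr}_1$, so globally over $B$ the other points escape to infinity.

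My first attempt would be to combine Remark~\ref{local:nature} with the homotopy-invariance trick, in the following way.
\begin{itemize}
\item Replace $B$ by a tubular-type neighbourhood of the small diagonal. For this, use a Jouanolou-type affine bundle over $B$, which is surjective on homology, to get an \'etale neighbourhood $U\subset X_B$ of the sections that is identified with an open subset of $E$.
\item Contract the $n$ points towards the diagonal by the family $x_i\mapsto x_1+t(x_i-x_1)$, performed in these linear coordinates.
\end{itemize}
The result is a cycle $\sigma_1,\ldots,\sigma_n$ of sections of a rank-$d$ vector bundle $E\to S$. Finally, the scaling $\sigma_i\mapsto t\sigma_i$ is a family over $S\times\aaa^1$ that stays inside the separated family $E$. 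By homotopy invariance, it identifies the class with that of the $n$-fold zero section of $E$ in $H_\ast^\sep$.

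\emph{Step 3 (universal bundles).} Now argue as in Step~3 of the non-separated proof.
\begin{itemize}
\item By the splitting principle (base change to a $\GL_d/T$-bundle, surjective on homology by Leray--Hirsch), $E=L_1\oplus\cdots\oplus L_d$.
\item Via the maps $L^{\oplus k+1}\setminus S\to\pp^k$ with $k>\frac12\deg\alpha$, each $L_i$ is pulled back from $\O(1)$ on $\pp^k$, and we push forward to $(\pp^k)^d$.
\item The homology of $(\pp^k)^d$ is spanned by the classes $[\pp^{m_1}\times\cdots\times\pp^{m_d}]$ of linear subproducts.
\end{itemize}
The resulting horizontal class is the $n$-fold zero section of $\O(1,0,\ldots)\oplus\cdots\oplus\O(\ldots,0,1)$ over $\pp^{m_1}\times\cdots\times\pp^{m_d}$. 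By definition this is $n!\,q_{n,\vec m}$. If instead Step~2 only succeeds for groups of coincident points lying in disjoint components of $X$, then products and Proposition~\ref{prodh} express the class through the $q_{\ul N}$ with several columns. Either way, the class lies in $H_\ast^{\sep,\taut}$.
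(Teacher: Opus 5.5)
\textbf{There is a genuine gap in Step~2, and the reduction it aims at is false, not just unproved.} Your main line shows that, for connected $X$, every degree-$n$ horizontal class is a combination of single-column classes $n!\,q_{n,\vec m}$. Test this with $d=1$, $X=\pp^1$, $n=2$, so that $[Z_2\pp^1]=c\,q_{22}$ for some constant $c$.
\begin{itemize}
\item Pairing with $e(T_{\ol Z})$ gives $c=3$, since $\ol Z_2\pp^1=\pp^2$ and $\langle e(T_{\ol Z}),q_{22}\rangle=1$.
\item Pairing with $c^2(T_{\ol Z})$ gives $\int_{\pp^2}(1+h)^6=15$ on the left. By Remark~\ref{3:16} it gives $3\cdot\frac{13}{2}$ on the right. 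This is a contradiction.
\end{itemize}
The correct answer is $2q_{22}-2q_{10}q_{12}+q_{11}^2$. Its product terms come from the part of $H_\ast(\pp^1\times\pp^1)$ away from the diagonal. Your fallback about disjoint components does not produce them: the two universal sections over $\pp^1\times\pp^1$ lie in one connected family and meet exactly along the diagonal.

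The individual moves fail for matching reasons.
\begin{itemize}
\item You cannot replace $B=X^n_S$ by a neighbourhood of the small diagonal. Homology is covariant, and for example $[\pp^1\times\pp^1]$ is not in the image of $H_4$ of any proper open subset. Remark~\ref{local:nature} only lets you shrink the total space around $D$, never the base.
\item Algebraically you only get \'etale neighbourhoods $U\to E$, not open ones. An \'etale map of targets over a fixed base is not a morphism of families, and it can merge points and change the class. For $d=0$, composing $\ul n\to\ul n$ with $\ul n\to\ast$ replaces $q_1^n$ by $n!\,q_n$, and these differ in $H_\ast^\sep$.
\item The contraction $x_1+t(x_i-x_1)$ for $t\in\aaa^1$ leaves any such neighbourhood.
\end{itemize}

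\textbf{The missing idea is a way to split the base homology according to where the sections coincide.} This is what the paper's double point degenerations provide. Stratify $S$ by the number $k$ of distinct values and induct on $k$.
\begin{itemize}
\item Assume $S^{(\ell)}=\varnothing$ for $\ell<k$. Pass to the universal situation so that $W=S^{(k)}$ is a regularly embedded closed subscheme, and set $Z=D^{(k)}\subset X_W$.
\item Degenerate the \emph{base} to $\Bl_W(S\times\aaa^1)$. Over it, $\Bl_Z(X\times\aaa^1)\setminus\Bl_Z(X_W)$ is a smooth separated family carrying the finite map $\Bl_{D_Z}(D\times\aaa^1)$.
\item Lemma~\ref{dumb} writes $\alpha=\beta+\gamma$. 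Here $\beta$ lives on $\Bl_WS$, where the sections have at least $k+1$ values. $\gamma$ lives on $\pp(N_{W/S}+1)$, where the family is a disjoint union of vector bundles carrying multiple zero sections (Corollary~\ref{3:17}).
\end{itemize}
The $\gamma$ pieces are the source of the multi-column augmented classes $q_{\ul N}$, and hence of the product terms above. Once the sections are pairwise disjoint, $S^{\amalg n}\to X$ is a closed immersion. Only then is deformation to the normal cone legal, giving disjoint unions of bundles with simple zero sections, after which your Step~3 applies.

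Your scaling observation is correct: $n$ sections of a single vector bundle have the same class as its $n$-fold zero section. But it only handles that final configuration. The real content lies in the mixed coincidence locus, which it does not address.
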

\begin{pf}
All horizontal classes are vertical, and we can decompose $D\to X$ into sections of $X\to S$.

We cannot use the deformation to the normal cone of $D\to X$, unless $D\to X$ is a closed immersion, because we are not allowing non-separated families.   
Instead, we use a closely related deformation technique: double point degenerations.  For these, the base $S$ of a family $X\to S$ is deformed as well. Details in the ensuing section.
\end{pf}

\subsection{Double point degenerations}

\paragraph{Recall of the basic construction.}
Let $Z\to X$ be a regular closed immersion.  The associated {\bf double point degeneration }$M_ZX$ (see \cite{Fulton}) is the blow-up of $X\times \aaa^1$ along $Z$, where $Z$ is embedded in $X\times\aaa^1$ by using $0$ in the second component.
$$M_ZX=\Bl_Z (X\times\aaa^1)=\Bl_{(Z\times 0)}(X\times \aaa^1)\,.$$
The double point degeneration comes with a closed embedding $Z\times\aaa^1\to \Bl_Z(X\times\aaa^1)$, and is fibered over $\aaa^1$; the composition $Z\times \aaa^1\to\aaa^1$ is the projection:
$$\begin{tikzcd}
Z\ar[d]\ar[r] & Z\times \aaa^1\ar[d] & Z\ar[l]\ar[d]\\
\pp(N_{Z/X}+1)\amalg_{\pp(N_{Z/X})}\Bl_Z X\ar[r]\ar[d] & \Bl_Z(X\times\aaa^1) \ar[d]& \ar[l] X\ar[d]\\
0\ar[r]& \aaa^1 & 1\ar[l]\rlap{\,.}\end{tikzcd}$$
The morphism $\Bl_Z(X\times\aaa^1)\to \aaa^1$ is flat. 
The fibre over $1\in\aaa^1$ (as well as all other points not equal to $0$) is the given embedding $Z\to X$. 

The fibre over $0$ is a Cartier divisor in $\Bl_Z(X\times \aaa^1)$, which is the sum of the two components $\pp(N_{Z/X}+1)$ and $\Bl_ZX$, intersecting along $\pp(N_{Z/X})$. (This is what the $\amalg$-notation is supposed to indicate.) The projective bundle $\pp(N_{Z/X})$ is embedded in $\pp(N_{Z/X}+1)$ as the hyperplane bundle at $\infty$, and in the blow-up $\Bl_ZX$ as the exceptional divisor. The embedding of $Z$ into the fibre over $0$ of $\Bl_Z(X\times\aaa^1)$ is as the zero section of the vector bundle $N_{Z/X}$, which is embedded in its projective completion $\pp(N_{Z/X}+1)$ in the usual way.

The blow-up $\Bl_ZX$ is embedded in $\Bl_{Z}(X\times \aaa^1)$ as the strict transform of $X\times 0$, the projective bundle $\pp(N_{Z/X}+1)$ is embedded in $\Bl_Z(X\times\aaa^1)$ as the exceptional divisor of the blow-up $\Bl_Z(X\times\aaa^1)$. 

\paragraph{Family version.}
We need a relative version  of this construction. For this, we consider a diagram
$$\begin{tikzcd}
Z\ar[r]\ar[dr] & X_W\ar[r]\ar[d]& X\ar[d]\\
        & W\ar[r]      & S\rlap{\,,}
\end{tikzcd}$$
where  $X\to S$ and $Z\to W$ are smooth, $W\to S$ is a regular closed immersion, and the square is cartesian. It follows that $Z\to X_W$ and $Z\to X$ are regular closed immersions as well.   Then we have an induced morphism of double point degenerations
$$\rho:Bl_Z(X\times\aaa^1)\setminus \Bl_Z X_W\longrightarrow \Bl_W(S\times \aaa^1)\,.$$

$$\begin{tikzcd}
\pp(N_{Z/X}+1)\setminus\pp(N_{Z/X_W})\amalg_{\pp(N_{Z/X})\setminus\pp(N_{Z/X_W})}\Bl_Z X\setminus \Bl_Z(X_W)\ar[r]\ar[d] & 
\Bl_Z(X\times\aaa^1)\setminus \Bl_Z(X_W) \ar[d,"\rho"]& \ar[l] X\ar[d]\\
\pp(N_{W/S}+1)\amalg_{\pp(N_{W/S})}\Bl_W S\ar[r]\ar[d] & \Bl_W(S\times\aaa^1) \ar[d,]& \ar[l] S\ar[d]\\
0\ar[r]& \aaa^1 & 1\ar[l]\rlap{\,.}\end{tikzcd}$$

\paragraph{Claim.}
The morphism $\rho$ is smooth. This follows from a version of the local criterion for flatness (see \cite{stacks-project}, Tag~00MD,  Lemma~10.99.16), which asserts that to prove that an $A$-module $M$ is flat, it suffices to find $f\in A$ which is a non-zero divisor on both $A$ and $M$, and then check that both $M_f$ is flat over $A_f$ and $M/fM$ is flat over $A/fA$. We can apply this criterion, because a blow-up is the disjoint union of the exceptional divisor, which is a Cartier divisor, and the open complement. Applying this criterion, it suffices to check flatness of $X\times\aaa^1\setminus X_W\to S\times\aaa^1\setminus W$ and $\pp(N_{Z/X}+1)\setminus\pp(N_{Z/X_W})\to \pp(N_{W/S}+1)$. This flatness will then follows from smoothness 
of both of these maps, which will then also prove the overall smoothness. The morphism on the exceptional divisors $\pp(N_{Z/X}+1)\setminus\pp(N_{Z/X_W})\to \pp(N_{W/S}+1)$, is the composition of the projection of projective bundles associated to the epimorphism of vector bundles $N_{Z/X}+\O_Z\to N_{W/S}|_Z+\O_Z$ and the morphism $\pp(N_{W/S}+1)|_Z\to \pp(N_{W/S}+1)$, which pulls back from the smooth morphism $Z\to W$. 

By this claim, $\rho$ is a family of $d$-folds. Note that none of these blow-up constructions produce non-separatedness, so $\rho$ is separated. 

\paragraph{Adding a map to the mix.}
We now bring in a finite map $D\to X\to S$, i.e., the composition $D\to S$ is finite \'etale representable. We furthermore assume that the induced morphism $Z\times_XD\to W\times_SD$ is an isomorphism. 
Then we have an expanded diagram
where all squares are cartesian, 
$$\begin{tikzcd}
D_Z\ar[r,"\sim"]\ar[d] & D_W\ar[r]\ar[d] & D\ar[d]\\
Z\ar[dr]\ar[r] & X_W\ar[r]\ar[d] & X\ar[d]\\
& W\ar[r] & S
\end{tikzcd}$$
We get   induced morphisms
\begin{equation}\label{hurrah}
\Bl_{D_Z}(D\times\aaa^1)\longrightarrow \Bl_Z(X\times\aaa^1)\setminus\Bl_Z(X_W)
\stackrel{\rho}{\longrightarrow} \Bl_W(S\times\aaa^1)\,.\end{equation}

$$\begin{tikzcd}
\pp(N_{D_Z/D}+1)
\amalg_{\pp(N_{D_Z/D})}
\Bl_{D_Z}(D)\ar[r]\ar[d] & 
\Bl_{D_Z}(D\times\aaa^1) \ar[d]& \ar[l] D\ar[d]\\
\pp(N_{Z/X}+1)\setminus\pp(N_{Z/X_W})
\amalg_{\pp(N_{Z/X})\setminus\pp(N_{Z/X_W})}
\Bl_Z X\setminus \Bl_Z(X_W)\ar[r]\ar[d] & 
\Bl_Z(X\times\aaa^1)\setminus \Bl_Z(X_W) \ar[d,"\rho"]& \ar[l] X\ar[d]\\
\pp(N_{W/S}+1)\amalg_{\pp(N_{W/S})}\Bl_W S\ar[r] & \Bl_W(S\times\aaa^1) & \ar[l] S 
\end{tikzcd}$$

Note that we do not need to remove anything from the source because the preimage of $\Bl_Z(X_W)$ under the morphism $\Bl_{D_Z}(D\times\aaa^1)\to \Bl_Z(X\times\aaa^1)$ is $\Bl_{D_Z}(D_W)$, which is empty.  By the above claim, the morphism $\Bl_{D_Z}(D\times\aaa^1)\to \Bl_W(S\times \aaa^1)$ is smooth of relative dimension zero, i.e., \'etale.  It is also proper and representable, hence finite representable. 

We conclude that (\ref{hurrah}) defines a finite map into a smooth family. It has the same degree as $D\to S$.

\begin{lem}\label{dumb}
For every homology class $\alpha\in H_\ast(S)$, there exist homology classes $\beta\in H_\ast(\Bl_WS)$  and $\gamma\in H_\ast(\pp(N_{W/S}+1))$, such that in $H_\ast(\Bl_W(S\times\aaa^1))$ we have 
$\alpha=\beta+\gamma$. 
\end{lem}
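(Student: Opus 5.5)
The plan is to read $\alpha$ as a class in $H_\ast(\Bl_W(S\times\aaa^1))$ via the fibre over $1$, which is $S$. I will first move it into the image of the special fibre $M_0=\pp(N_{W/S}+1)\amalg_{\pp(N_{W/S})}\Bl_WS$ over $0$. Then I will split it along the two components using Mayer--Vietoris. Write $M=\Bl_W(S\times\aaa^1)$ and $p:M\to\aaa^1$ for the structure map. Over $\aaa^1\setminus0$ the blow-up changes nothing, so $p^{-1}(\aaa^1\setminus 0)=S\times(\aaa^1\setminus0)$. Hence the class $\alpha$ placed in the fibre over $1$ agrees in $H_\ast(M)$ with $\alpha$ placed in the fibre over any $t\neq0$, by homotopy invariance exactly as in Remark~\ref{local:nature}.

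Second, I would show that $\alpha_t$ lies in the image of $H_\ast(M_0)\to H_\ast(M)$ for $t$ small. Represent $\alpha$ by a compact cycle, so only a compact piece $K\subset S$ matters. $M_0$ is a closed analytic subset of the analytic space $M$, so it has an open neighbourhood $U$ which deformation retracts onto $M_0$, via a retraction $r:U\to M_0$ (regular neighbourhoods of analytic subsets, after triangulating). The preimage of $K\times\{|t|\le\epsilon\}$ under the blow-down is compact. For $\epsilon$ small it therefore lies, away from $M_0$, inside $U$, so the cycle $\alpha_t$ lies in $U$ for $0<|t|\le\epsilon$. Then $\alpha_t$ is homologous in $U$, and hence in $M$, to $r_\ast\alpha_t$, which is a class supported on $M_0$. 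The main technical care is in this step. If one prefers to stay algebraic, the alternative I would try is the specialization map for the flat family $M\to\aaa^1$ (nearby cycles), which yields the same conclusion.

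Third, I would decompose a class of $M_0$. Set $A=\Bl_WS$, $B=\pp(N_{W/S}+1)$ and $C=A\cap B=\pp(N_{W/S})$. The Mayer--Vietoris sequence for the closed cover $M_0=A\cup B$ (valid since the pair is triangulable) reads
$$H_\ast(C)\longrightarrow H_\ast(A)\oplus H_\ast(B)\longrightarrow H_\ast(M_0)\stackrel{\partial}{\longrightarrow}H_{\ast-1}(C)\longrightarrow\cdots$$
Now $C\to B$ is the inclusion of the hyperplane bundle at infinity $\pp(N)\subset\pp(N\oplus\O)$ over $W$. By Leray--Hirsch both homologies are free $H_\ast(W)$-modules on powers of the hyperplane class, and the inclusion is injective on homology. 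Hence $H_\ast(C)\to H_\ast(A)\oplus H_\ast(B)$ is injective, so $\partial=0$ and $H_\ast(A)\oplus H_\ast(B)\to H_\ast(M_0)$ is surjective. Writing $r_\ast\alpha_t=\beta+\gamma$ with $\beta\in H_\ast(\Bl_WS)$ and $\gamma\in H_\ast(\pp(N_{W/S}+1))$, and pushing forward to $M$, gives $\alpha=\beta+\gamma$ in $H_\ast(\Bl_W(S\times\aaa^1))$.
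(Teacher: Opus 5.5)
Your proposal is correct and follows the paper's route: push $\alpha$ from a general fibre into the central fibre $\pp(N_{W/S}+1)\amalg_{\pp(N_{W/S})}\Bl_WS$, then decompose it using the two components. Your version is more careful than the paper's one-line proof in two places. First, the compact-support and regular-neighbourhood step avoids relying on a global contraction of $\Bl_W(S\times\aaa^1)$ onto the central fibre, which needs justification when $S$ is not proper. Second, the Mayer--Vietoris argument, using injectivity of $H_\ast(\pp(N_{W/S}))\to H_\ast(\pp(N_{W/S}+1))$, actually proves the paper's bare claim that the homology of the central fibre is generated by classes from its two components.
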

\begin{pf}
The double point degeneration $\Bl_W(S\times\aaa^1)$ contracts onto the central fibre, so it has the same homology as the central fibre. This defines a pushforward map from the generic fibre to the central fibre.  Then one uses that the homology of the central fibre is generated (additively), by classes coming from its two components. 
\end{pf}

\begin{cor}\label{3:17}
Any horizontal class $\eta(D/X/S,\alpha)$ defined in terms of  $D\to X\to S$ and a class $\alpha\in H_\ast(S)$, is, in $H_\ast^\sep$, equivalent to a linear combination of horizontal classes defined in terms of the two families 
$$\Bl_{D_Z}(D)\longrightarrow\Bl_Z(X)\setminus \Bl_Z(X_W)\longrightarrow \Bl_WS$$
 and 
 $$\pp(N_{D_Z/D}+1)\longrightarrow \pp(N_{Z/X}+1)\setminus\pp(N_{Z/X_W})\longrightarrow \pp(N_{W/S}+1)\,.$$
\end{cor}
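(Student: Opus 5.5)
The plan is to use the family $\Bl_{D_Z}(D\times\aaa^1)\to \Bl_Z(X\times\aaa^1)\setminus\Bl_Z(X_W)\to \Bl_W(S\times\aaa^1)$ constructed in (\ref{hurrah}) as a single cycle family over the base $U=\Bl_W(S\times\aaa^1)$, and read off relations in $H_\ast^\sep$ from its restrictions to the fibre over $1$ and to the two components of the central fibre. By the Claim, the middle morphism is a separated family of $d$-folds, and the source is finite \'etale representable over $U$ of the same degree as $D\to S$. So we obtain a classifying section $\Delta_U:U\to Z_n(\cdots)$, and for any $\theta\in H_\ast(U)$ a horizontal class $\eta(\Bl_{D_Z}(D\times\aaa^1)/\cdots/U,\theta)$.

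First, restrict to the fibre over $1\in\aaa^1$. The inclusion $j_1:S\to U$ pulls the triple back to $D\to X\to S$, because $W$ lives over $0$. By the description of equality of horizontal classes, $\eta(D/X/S,\alpha)=\eta(\cdots/U,j_{1\ast}\alpha)$ in $H_\ast^\sep$. Next, apply Lemma~\ref{dumb} to write $j_{1\ast}\alpha=i_{B\ast}\beta+i_{P\ast}\gamma$ in $H_\ast(U)$. Here $i_B:\Bl_WS\to U$ and $i_P:\pp(N_{W/S}+1)\to U$ are the inclusions of the two components of the central fibre. Because $\theta\mapsto\eta(\cdots/U,\theta)=\Delta_{U\ast}\theta$ is linear, we get
$$\eta(D/X/S,\alpha)=\eta(\cdots/U,i_{B\ast}\beta)+\eta(\cdots/U,i_{P\ast}\gamma)\,.$$

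Then identify each term via pullback along $i_B$ and $i_P$. This requires computing the base changes of the family and of the cycle along the two components. From the diagram of central fibres, the restriction of $\Bl_Z(X\times\aaa^1)\setminus\Bl_Z(X_W)$ to $\Bl_WS$ is $\Bl_ZX\setminus\Bl_Z(X_W)$, and its restriction to $\pp(N_{W/S}+1)$ is $\pp(N_{Z/X}+1)\setminus\pp(N_{Z/X_W})$. Likewise, the cycle restricts to $\Bl_{D_Z}D$ and to $\pp(N_{D_Z/D}+1)$ respectively. One must check that these are honest fibre products, not merely the reduced components. Each component of the central fibre is the scheme-theoretic preimage of the corresponding component of $\Bl_W(S\times\aaa^1)$, and this holds because $\rho$ is flat and the strict transform and exceptional divisor are compatible with flat base change of blow-ups along the pulled-back centre. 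With this in hand, the two terms are exactly horizontal classes for the two stated families, $\eta(\Bl_{D_Z}D/\cdots/\Bl_WS,\beta)$ and $\eta(\pp(N_{D_Z/D}+1)/\cdots/\pp(N_{W/S}+1),\gamma)$, and the corollary follows.

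The main obstacle is this base-change identification, that is, verifying that the components of the central fibre of $\rho$, and of the cycle, are the scheme-theoretic pullbacks along $i_B$ and $i_P$. I would argue it locally, using that blow-up commutes with flat base change and that $\rho$ restricted over the open complement and over the exceptional divisor was already described in the proof of the Claim. The fact that $D_Z\cong D_W$, so the cycle avoids $\Bl_Z(X_W)$, ensures the cycle restrictions are the stated ones.
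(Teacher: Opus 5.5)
Your proof is correct and is the argument the paper leaves implicit: the corollary is meant to follow directly from the family (\ref{hurrah}) and Lemma~\ref{dumb}, by viewing (\ref{hurrah}) as a single cycle family over $\Bl_W(S\times\aaa^1)$, pushing $\alpha$ in from the fibre over $1$, splitting it with Lemma~\ref{dumb}, and restricting to the two components of the central fibre via the Pardon relations. One correction to your justification of the step you flag: flat base change of blow-ups does identify the restricted cycles (since $D_Z\cong D_W$, the cycle is just $D\times_S\Bl_W(S\times\aaa^1)$), but it does not apply to the middle family, whose centre $Z\times 0$ is smaller than the pulled-back centre $X_W\times 0$---this is exactly why that family restricts to $\pp(N_{Z/X}+1)\setminus\pp(N_{Z/X_W})$ rather than to $X\times_S\pp(N_{W/S}+1)$---so there you should argue instead that $\rho$ is flat, that away from $\Bl_Z(X_W)$ the inverse image ideal of $W\times 0$ is the ideal of the exceptional divisor $\pp(N_{Z/X}+1)$, and then cancel this Cartier divisor from the pullback of the central fibre $\pp(N_{W/S}+1)+\Bl_WS$ (which is $\pp(N_{Z/X}+1)+\Bl_ZX$ off $\Bl_Z(X_W)$) to get $\rho^{-1}(\Bl_WS)=\Bl_ZX\setminus\Bl_Z(X_W)$ scheme-theoretically.
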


\paragraph{Stratifications.}  

For a family of $d$-folds $X\to S$, we consider the following diagonal:
\begin{equation}\label{angst}
\coprod_{\ul n\epi\ul k}\mathring X^k_S\longrightarrow X^n_S\,.
\end{equation}
We can think of a point on the left hand side as a map $\ul n\epi\ul k\to X$, and a point on the right hand side as a map $\ul n\to X$, and the morphism on display eliminates the $\ul k$ in the middle. The notation $\mathring X_S^k$ denotes the open complement of the big diagonal. With this stipulation, (\ref{angst}) is a regular locally closed immersion. The images of all(\ref{angst}) as $1\leq k\leq n$ form a stratification of the target.

The symmetric group acts on the morphism (\ref{angst}), and we pass to the quotient:
$$Z_n^{(k)}X=\Big(\coprod_{\ul n\epi\ul k}\mathring X^k_S\Big)/\Sigma_n\longrightarrow X^n_S/\Sigma_n=Z_nX\,.$$
For every $k$ this is a regular locally closed immersion and the union over all $k$ is the stratification of $Z_nX$ by {\em number of values}. 

We also have the universal degree $n$ map
$$X^n_S\times_{\Sigma_n}\ul n\longrightarrow X^n_S/\Sigma_n\times_SX\,.$$
When we pull it back via $Z_n^{(k)}X\to Z_nX$, it factors through a degree $k$ map which is a closed immersion:
\begin{equation}\label{factor}
\Big(\coprod_{\ul n\epi\ul k}\mathring X^k_S\Big)\times_{\Sigma_n}\ul n
\longrightarrow \Big(\coprod_{\ul n\epi\ul k}\mathring X^k_S\Big)/\Sigma_n \times\ul k
\longrightarrow \Big(\coprod_{\ul n\epi\ul k}\mathring X^k_S\Big)/{\Sigma_n}\times X\,.
\end{equation}

If we now have a degree $n$ family of finite maps $D\to X\to S$, we get an induced classifying morphism $\Delta:S\to Z_nX$, the pullback of the stratification $(Z_n^{(k)})_k$ via this map $\Delta$ is the stratification of $S$ by number of values of $D\to X$. Let us use $S^{(k)}$ as notation of the strata. Note that although the universal strata $Z_n^{(k)}\to Z_nX$ are regularly immersed, this is not true in general for the strata $S^{(k)}\to S$. 

Via the classifying morphism $S\to Z_nX$, we also pull back the factorization of the universal degree $n$ map (\ref{factor}), to factor $D|_{S^{(k)}}$.  We get the following diagram
\begin{equation}\label{final:diag}\begin{tikzcd}
D\vert_{D^{(k)}}\ar[r,"\sim"]\ar[d] & D\vert_{S^{(k)}}\ar[r]\ar[d]\ar[dl] & D\ar[d]\\
D^{(k)}\ar[dr]\ar[r] & X\vert_{S^{(k)}}\ar[r]\ar[d] & X\ar[d]\\
& S^{(k)}\ar[r] & S
\end{tikzcd}\end{equation}

\begin{figure}[h]
\centering
\includegraphics[scale=0.2]{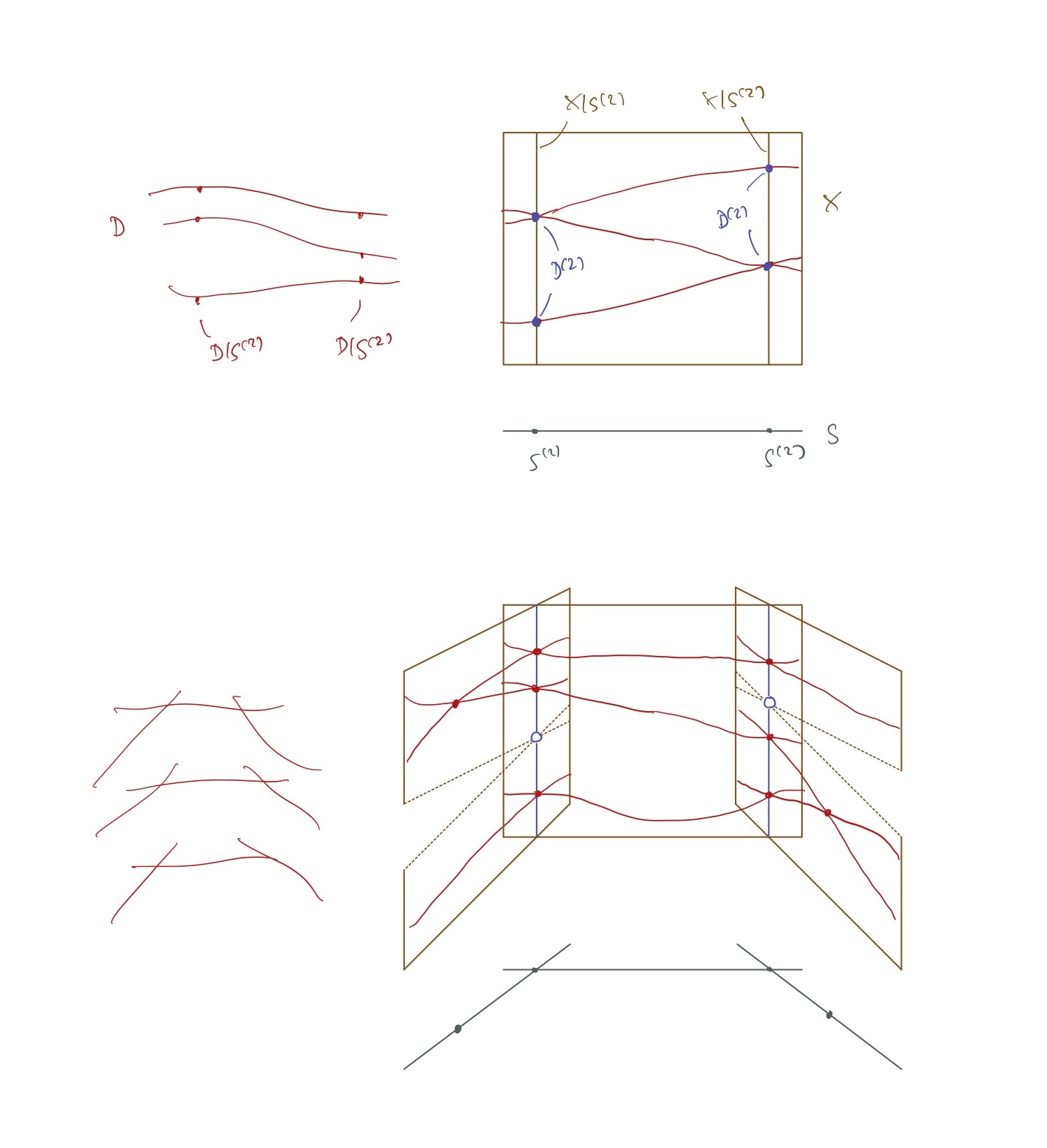}
\caption{Before and after double point degeneration}
\end{figure}

\paragraph{Using double point degenerations to simplify things.} 
We return to the proof of Theorem~\ref{ttsca}.
We consider a finite map of degree $n$  in a smooth family $D\to X\to S$. 

For the sake of an inductive argument, let us assume that $S^{(\ell)}=\varnothing$, for $\ell<k$.  This will imply that $S^{(k)}$ is a closed subscheme of $S$.  By passing to the universal situation, if necessary, we may assume that $S^{(k)}\to S$ is a regular closed immersion. The diagram (\ref{final:diag}) then satisfies all the conditions we imposed above in the lead-up to Corollary~\ref{3:17}. 

We may therefore employ Corollary~\ref{3:17}, and replace the family $D\to X\to S$ with the two families 
$$\Bl_{D|_{D^{(k)}}}(D)\longrightarrow\Bl_{D^{(k)}}(X)\setminus \Bl_{D^{(k)}}(X|_{S^{(k)}})\longrightarrow \Bl_{S^{(k)}}(S)$$
 and 
 $$\pp(N_{D|_{D^{(k)}}/D}+1)\longrightarrow \pp(N_{D^{(k)}/X}+1)\setminus
 \pp(N_{D^{(k)}/X|_{S^{(k)}}})
 \longrightarrow \pp(N_{S^{(k)}/S}+1)\,.$$

We   deduce that a horizontal class defined by $n$ sections with at least $k$ different values is a linear combination of horizontal classes of two types:
\begin{items}
\item horizontal classes defined by $n$ sections with at least $k+1$ distinct values,
\item horizontal classes for families consisting of disjoint unions of vector bundles.
\end{items}
By induction, we conclude that every horizontal class defined by $n$ sections is a linear combination of horizontal classes of two types:
\begin{items}
\item horizontal classes defined by $n$ sections that are pairwise disjoint,
\item horizontal classes for families consisting of disjoint unions of vector bundles.
\end{items}
Then, as in Section~\ref{non-sep}, we can deform to the normal cone to reduce the first type to the second.  This is now legal, because $S^{\amalg n}\to X$ is a closed immersion, and so the deformation to the normal cone is a separated family.  But we cannot avoid that the sections intersect in the second type of family. 

Now we can continue as in Section~\ref{non-sep} to conclude the proof. 

\begin{rmk}\label{redm}\comment{should carefully write down a proof that the $p_{n,\vec m}$ with $n-1\leq \vec m$ generate.  Also, do we want to conjecture that the $p_{n,\vec m}$ with $n-1\not\leq \vec m$ are zero?}
Consider one of the multiplicative generators $q_{n,\vec m}$.  If $n-1>\min( \vec m)$,  the $n$-fold zero section can be deformed to $n$ generic sections, which   have no common values. When performing the double point degeneration, these are never produced. Therefore, only $q_{n,\vec m}$ with $n-1\leq\min (\vec m)$ or $n-1\leq \vec m$ (by which we mean that $n-1\leq m_i$ for all $i=1,\ldots,d$) are used to generate the tautological $H_\ast$ multiplicatively.

To generate the tautological Pardon homology algebra, we may therefore restrict to $q_{n,\vec m}$ with $n-1\leq\min(\vec m)$, or $n-1\leq \vec m$ (by which we mean that $n-1\leq m_i$ for all $i=1,\ldots,d$. 
In particular, for fixed $\vec m$, only finitely many $n$ are needed. 
\end{rmk}

\subsection{Consequences I}

Here we deal only with the case $d=1$ of curves.

\begin{prop}
For a proper curve $X\to \ast$, we can express the vertical classes $[Z_nX]=[\ol Z_nX]\in H_\ast^{\sep,\taut}$ in terms of the primitive classes $p_{\smat{n\\m}}$, which we will write as $p_{nm}$, by the formula
$$\sum_n[Z_nX]T^n=\Big(\exp\sum_n p_{nn}T^n\Big)^{\chi(X)}\,.$$
Unfortunately, we were not able to find such a simple formula in terms of the tautological classes $q_{nm}$. 
\end{prop}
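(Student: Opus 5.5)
Here is the plan. First I show that $\sum_n[Z_nX]T^n$ is group-like. By the coproduct formula for vertical classes we have $\Delta[ZX]=[ZX]\otimes[ZX]$, and this holds in $H_\ast^\sep$ as well. Hence $\log\sum_n[Z_nX]T^n=\sum_n a_nT^n$ with each $a_n$ primitive in $H_\ast^{\sep,\taut}$; by Theorem~\ref{ttsca} the vertical classes are tautological. By the corollary on primitives, each $a_n$ is then a $\qq$-linear combination of the $p_{n,m}$ with $m\geq n-1$.

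Next I would pin down the bidegree. For $d=1$ the class $[Z_nX]$ has homological degree $2n$ and cycle degree $n$, so $a_n$ does too. The class $p_{n,m}$ has homological degree $2m$ (each factor $q_{\ul{\vec n}_i}$ has degree $2\cdot$(its $\vec m$-entry)). Hence $a_n=c_n(X)\,p_{nn}$ for a scalar $c_n(X)$, and the constraint $n\le m+1$ is automatic.

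It remains to show $c_n(X)=\chi(X)\,c_n$ with universal $c_n$, and that $c_n=1$. For the first part, $c_n(X)$ depends only on the class in $H_\ast^\sep$. Since $[Z(X\amalg Y)]=[ZX][ZY]$, the logarithm is additive under disjoint union. Since proper smooth curves of the same genus lie in a connected proper family, $c_n$ depends only on the genus. Additivity alone does not give linearity in $\chi$, so for the genus dependence I would instead evaluate against a separating enumerative theory.

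To read off $c_n(X)$, I would use the inertial theories $\ul P$ of Example~\ref{inertia} with $P$ generic multiplicative. By the computation cited as Theorem~\ref{iner}, $\langle\ul P,p_{n m}\rangle=\frac1n t_{m+1-n}$, so $\langle\ul P,p_{nn}\rangle=t_1/n$. Since $\ul P$ is multiplicative, applying it to $\log$ gives
\[\log\sum_n\langle\ul P,[Z_nX]\rangle T^n=\sum_n c_n(X)\,\frac{t_1}{n}\,T^n.\]
I would compute the left side directly by orbifold localization on $X^n/\Sigma_n$. The inertia component for a permutation of cycle type $\mu$ is $\prod_i X^{\mu_i}/C(\sigma)$, and its tangent bundle is $\prod T_X$ on each factor. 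A standard exponential-formula argument then gives
\[\sum_n\langle\ul P,[Z_nX]\rangle T^n=\exp\Big(\sum_k\frac{T^k}{k}\langle P(T_X),[X]\rangle\Big)=\exp\Big(\sum_k\frac{T^k}{k}\,t_1\chi(X)\Big),\]
because $\langle P(T_X),[X]\rangle=t_1\chi(X)$. Comparing coefficients yields $c_n(X)=\chi(X)$. This pairing detects the coefficient, since $\ul P$ separates the basis $p_{nm}$ in degree $2n$, so the formula follows.

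The main obstacle is this inertia computation. One must check that the pushforward $i_\ast$ from the inertia stack and the normalization factors ($\frac1{n!}$ and the centralizer orders) produce exactly $\frac1k$ per $k$-cycle, consistently with the conventions under which Theorem~\ref{iner} gives $\frac1n t_{m+1-n}$. I would first test the case $X=\pp^1$, $n=1,2$, against the formula for $\langle\ul P,p_{nm}\rangle$ to fix the normalizations.
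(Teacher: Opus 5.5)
Your argument is correct, and its skeleton is the paper's: $\sum_n[Z_nX]T^n$ is group-like, so its logarithm is primitive and tautological. The bidegree (cycle degree $n$, homological degree $2n$) then forces the degree-$n$ coefficient to be a multiple of $p_{nn}$. That multiple is read off by pairing with a multiplicative theory that is nonzero on $p_{nn}$.

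Where you differ is the choice of that theory.

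\textbf{The paper's choice.} The paper uses $\ol e=e(T_{\ol Z})$, which exists only because $\ol Z_nX$ is smooth when $d=1$. Since $\ol Z_n\O(1)=\O(1)\oplus\cdots\oplus\O(n)$ over $\pp^m$ (Remark~\ref{3:16}), one gets $\langle\ol e,q_{nm}\rangle=\delta_{nm}$. Hence $\sum_{n,m}\langle\ol e,p_{nm}\rangle T^nU^m=\log\frac{1}{1-TU}$, so $\langle\ol e,p_{nn}\rangle=\frac1n$. The vertical side is Macdonald's formula $\sum_n\chi(\ol Z_nX)T^n=(1-T)^{-\chi(X)}$.

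\textbf{Your choice.} You use the inertial theory $\ul P$, which amounts to running the $d=1$ case of the argument the paper later gives for Theorem~\ref{sure}. Your inertia computation, which is Lemma~\ref{minus}, is right, and the normalizations you worried about do work out. First, $\langle i_!P(T_{IZ}),[Z_nX]\rangle=\int_{[IZ_nX]}P(T_{IZ})$, because $i^![Z_nX]=[IZ_nX]$. Second, the sector with $m_k$ cycles of length $k$ contributes $\prod_k A^{m_k}/(k^{m_k}m_k!)$, where $A=\langle P(T_X),[X]\rangle=t_1\chi(X)$. For $P=1+x$ this is the averaging identity $\frac1{n!}\sum_\sigma\chi((X^n)^\sigma)=\chi(\ol Z_nX)$, so the two vertical computations are really the same fact.

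\textbf{Trade-off.} Your route is uniform in $d$, but it rests on Theorem~\ref{iner}, which the paper states without proof. You need its exact value $\langle\ul P,p_{nn}\rangle=t_1/n$. Establishing that requires evaluating $\ul P$ on the multiple points $q_{nm}$, including the twisted-sector (self-intersection) contributions. By contrast, the paper's $\langle\ol e,p_{nn}\rangle=\frac1n$ comes from an elementary Chern class computation on $\pp^m$.

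Your middle paragraph about connected families and additivity under disjoint union is never used and can be dropped.
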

\begin{pf}
Let us denote by $\ol e=e(T_{\ol Z})$. The Euler class $e=e(T_{Z})$ is less useful here, $\langle e,p_{nn}\rangle=0$, unless $n=1$. 

We introduce formal variables $T$, which will track the degree, and $U$ which will track half the homological degree. 
We pair the equation 
$$\sum_{n,m}p_{nm}T^nU^m=\log \sum_{n,m}q_{nm}T^nU^m$$
 with $\ol e$ to get
\begin{align*}
\sum_{n,m}\langle \ol e,p_{nm}\rangle T^nU^m
=&\log \sum_{n,m}\langle \ol e,q_{nm}\rangle T^nU^m\\
=&\log \sum_{n,m}\delta_{nm} T^nU^m\\
=&\log\frac{1}{1-TU}\\
=&\sum_{n}\frac{1}{n}(TU)^n\,.
\end{align*}
 This works because $\ol e$ is multiplicative.
We conclude that $\langle \ol e,p_{nm}\rangle=0$, if $n\not=m$, and
$\langle \ol e,p_{nn}\rangle=\frac{1}{n}$. 

We consider a proper curve $X\to \ast$. Since  $\sum_{n} [Z_nX]T^n U^n$ is group-like, its logarithm is primitive, and therefore a linear combination of the $p_{n,m}T^nU^m$. 
$$\log\sum_{n}[Z_nX](TU)^n=\sum_{n,m} a_{nm} p_{nm}T^nU^m\,.$$
Projecting into total degree zero, we get the equation
$$\log\sum_{n}[Z_nX](TU)^n=\sum_{n} a_{nn} p_{nn}(TU)^n\,.$$
Let us pair this equation with $\ol e$:
$$\log\sum_{n}\langle \ol e,[Z_nX]\rangle(TU)^n=\sum_{n} a_{nn} \langle \ol e,p_{nn}\rangle(TU)^n\,,$$
or
$$\log \Big(\frac{1}{1-TU}\Big)^{\chi(X)}=\sum_{n} \frac{a_{nn}}{n}(TU)^n\,,$$
which implies that $a_{nn}=\chi(X)$, for all $n$. We conclude that
$$\log\sum_n[Z_nX](TU)^n=\chi(X)\sum_n p_{nn}(TU)^n\,,$$
or,
$$\sum_n[Z_nX](TU)^n=\Big(\exp\sum_n p_{nn}(TU)^n\Big)^{\chi(X)}\,,$$
which expresses the vertical classes $[Z_nX]$ in terms of the primitive horizontal classes $p_{nn}$. 
\end{pf}

\begin{cor}\label{mst}
Let $e\in H^\sep$ be a multiplicative separable theory.  Assume that $e$ has total degree zero, by which we mean that $\langle e,\alpha\rangle=0$, unless $\alpha\in H_\ast^\sep$ is of total degree zero.  Then we have
$$\sum_n\langle e,[Z_nX]\rangle T^n
=\Big(\exp\sum_n \langle e,p_{nn}\rangle T^n\Big)^{\chi(X)}
=\Big(\sum_n \langle e,q_{nn}\rangle T^n\Big)^{\chi(X)}\,.$$
This reduces the calculation of the numerical invariants $\langle e,[Z_nX]\rangle$ for all curves $X$, to considering the families  $\O(1)\to\pp^n$ with the $n$-fold zero section.
\end{cor}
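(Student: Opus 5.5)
We prove the first equality by pairing the formula of the preceding Proposition with $e$. That Proposition gives
$$\sum_n[Z_nX]T^n=\Big(\exp\sum_n p_{nn}T^n\Big)^{\chi(X)}$$
in $H_\ast^{\sep,\taut}$ (after specializing $U=1$). Because $e$ is multiplicative, the map $\alpha\mapsto\langle e,\alpha\rangle$ is a ring homomorphism $H_\ast^\sep\to\qq$. Extending it coefficientwise to power series in $T$, it commutes with $\exp$ and with raising to the power $\chi(X)$; the latter is a formal power of a series with constant term $1$, defined via $\exp(\chi(X)\log(\cdot))$. This yields
$$\sum_n\langle e,[Z_nX]\rangle T^n=\Big(\exp\sum_n\langle e,p_{nn}\rangle T^n\Big)^{\chi(X)}.$$
This step does not yet use the total degree zero hypothesis, since the Proposition already involves only the $p_{nn}$.

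For the second equality, we pair the defining relation between the $q$'s and $p$'s,
$$\sum_{n,m}p_{nm}T^nU^m=\log\sum_{n,m}q_{nm}T^nU^m,$$
with $e$, again using multiplicativity so that pairing commutes with $\log$. Here $q_{00}=1$, and the identity is the $d=1$ case of the definition of the $p_{\ul{\vec n}}$ in the Corollary on primitives. We then use the total degree zero assumption. The class $q_{nm}$ has homological degree $2m$ and cycle degree $n$, so its total degree is $2m-2n$, and the same holds for $p_{nm}$ because the total degree is additive under products. Hence $\langle e,q_{nm}\rangle=0=\langle e,p_{nm}\rangle$ unless $m=n$. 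Setting $U=1$, or equivalently restricting to the diagonal terms $(TU)^n$, we obtain
$$\sum_n\langle e,p_{nn}\rangle T^n=\log\sum_n\langle e,q_{nn}\rangle T^n.$$
Exponentiating and substituting into the first equality gives the second equality.

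The main point needing care is that the restriction to total degree zero commutes with taking the logarithm. This holds because the relevant series are bigraded, and the diagonal $m=n$ (total degree $0$) part of a product of bigraded series only receives contributions from factors of total degree $0$, given that $e$ annihilates every other total degree. Convergence is not an issue: all sums are formal, and the constant terms satisfy $\langle e,q_{00}\rangle=1$, since $e$ is multiplicative and so its degree $0$ part is $1$.

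The final sentence of the statement follows from the definition of $q_{nn}$: it is $\frac1{n!}$ times the horizontal class of the $n$-fold zero section of $\O(1)\to\pp^n$, paired against $[\pp^n]$.
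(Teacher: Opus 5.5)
Your proposal is correct and follows essentially the route the paper intends. The first equality comes from applying the ring homomorphism $\langle e,-\rangle$ to the preceding Proposition, and the second from pairing $\sum_{n,m}p_{nm}T^nU^m=\log\sum_{n,m}q_{nm}T^nU^m$ with $e$ and keeping only the total-degree-zero (diagonal) terms, exactly as the paper does with $\ol e$ in that Proposition's proof; you also rightly note that the total degree zero hypothesis is needed only for the second equality.
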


\subsection{Consequences II}

We will now deal with the case $d$ arbitrary.  Instead of using   $T_{\ol Z}$, which is not a vector bundle any longer, we will use inertial theories, see Example~\ref{inertia}.

\begin{lem}\label{minus} 
Let $P$ be a (formal)  multiplicative (or group-like) symmetric function, and $\ul P$ its associated inertial theory.  Let $X\to \ast$ be a proper $d$-fold.  Then 
$$\sum_{n=0}^\infty \langle \ul P,[Z_nX]\rangle T^n=(1-T)^{-\langle P(T_X),[X]\rangle}\,.$$
\end{lem}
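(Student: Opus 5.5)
We compute $\langle \ul P,[Z_nX]\rangle$ directly as an integral over the inertia stack of $Z_nX=X^n/\Sigma_n$; the Pardon-algebra formalism is not needed. By definition $\ul P(X/\ast)=i_\ast P(T_{IZ_nX})$, where $i:IZ_nX\to Z_nX$ is the proper inclusion of the inertia stack. So, by adjunction of pushforward and the fundamental class (restriction of $[Z_nX]$ to each component of the smooth stack $IZ_nX$ is its fundamental class),
$$\langle \ul P,[Z_nX]\rangle=\int_{[IZ_nX]}P(T_{IZ_nX})\,.$$

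\textbf{Step 1: decompose the inertia stack.} Using (\ref{inert:def}), $IZ_nX=\big(\coprod_{\sigma}(X^n)^\sigma\big)/\Sigma_n=\coprod_{[\sigma]}(X^n)^\sigma/C(\sigma)$, the sum over conjugacy classes, i.e.\ cycle types $\mu=(1^{a_1}2^{a_2}\cdots)\vdash n$. For $\sigma$ of type $\mu$, the fixed locus $(X^n)^\sigma$ is $X^{\ell(\mu)}$, one diagonal copy of $X$ for each cycle. The centralizer is $C(\sigma)=\prod_k (\zz/k)\wr\Sigma_{a_k}$. Each cyclic factor $\zz/k$ acts trivially on $(X^n)^\sigma$, and $\Sigma_{a_k}$ permutes the $a_k$ copies of $X$ belonging to $k$-cycles. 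The tangent bundle of the component is $T_{X^{\ell(\mu)}}$, so only the plain tangent bundle of $X$ enters the integrand.

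\textbf{Step 2: integrate using multiplicativity.} Since $P$ is multiplicative, $P(T_{X^\ell})=\boxtimes P(T_X)$. Write $c=\langle P(T_X),[X]\rangle$ (only the degree-$d$ part contributes). Using $[Y/G]=\frac1{|G|}\pi_\ast[Y]$, the component of type $\mu$ contributes $c^{\ell(\mu)}/|C(\sigma)|=c^{\ell(\mu)}/z_\mu$, where $z_\mu=\prod_k k^{a_k}a_k!$. Thus
$$\sum_n\langle \ul P,[Z_nX]\rangle T^n=\sum_{\mu}\frac{c^{\ell(\mu)}}{z_\mu}T^{|\mu|}=\prod_{k\ge1}\sum_{a\ge0}\frac{(cT^k/k)^a}{a!}=\exp\Big(c\sum_k \frac{T^k}{k}\Big)=(1-T)^{-c}\,,$$
which is the claimed formula.

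\textbf{Main obstacle.} The main point of care is the stacky bookkeeping. One must check that, as a substack of the smooth stack $Z_nX$, the inertia component really is $X^{\ell}\times B(\prod_k(\zz/k)^{a_k})\,/\prod_k\Sigma_{a_k}$, with fundamental class giving the factor $1/z_\mu$ and not $1/\prod a_k!$. One must also check that the trivially acting $\zz/k$ factors contribute nothing to the tangent bundle. If $X$ is itself a Deligne--Mumford stack, we use (\ref{inert:def}) literally as stipulated in Example~\ref{inertia}, so the same computation applies verbatim.
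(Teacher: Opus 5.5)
Your argument is correct. The paper states Lemma~\ref{minus} without proof, so there is nothing to compare it against. Your computation supplies exactly what is missing, and each step checks out:
\begin{itemize}
\item With $\ul P=i_!P(T_{IZX})$, adjunction together with $i^![Z_nX]=(a\circ i)^![\ast]=[IZ_nX]$ (functoriality of Gysin maps) reduces the pairing to an integral over the inertia stack.
\item The cycle-type decomposition, using the full centralizer order $z_\mu$ rather than $\prod_k a_k!$, gives $\sum_{\mu\vdash n}c^{\ell(\mu)}/z_\mu$.
\item The exponential formula then gives $(1-T)^{-c}$.
\end{itemize}

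This is the $P$-weighted version of Macdonald's formula, which the paper invokes in the $d=1$ example. Taking $P=1+x$ recovers $\sum_n\chi(\ol Z_nX)T^n=(1-T)^{-\chi(X)}$.

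Your route has the advantage of being independent of the tautological algebra. That matters, because Theorem~\ref{sure} uses this lemma as an input.

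One small caveat concerns your claim that the computation applies verbatim to stacky $X$. For this to hold, $(X^n)^\sigma$ in (\ref{inert:def}) must be read as the strict diagonal copy $X^{\ell(\mu)}$. A $2$-categorical fixed locus would instead bring in the inertia of $X$. The strict-diagonal reading is presumably what the paper's convention intends.
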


\begin{thm}\label{iner}
Let $P$ be a multiplicative or group-like  symmetric function.  Let $\ul P$ be the inertial enumerative theory that $P$ defines. Then we have, for every $n$
$$\sum_{\vec m} \langle \ul P,p_{n\vec m}\rangle U_1^{m_1}\ldots U_d^{m_d}
=\frac{1}{n} (U_1\ldots U_d)^{n-1}P(U_1)\ldots P(U_d)\,.$$
Equivalently, 
$$\sum_{\vec x}\langle \ul P,p_{n,\vec x+n-1}\rangle U_1^{x_1}\ldots U_d^{x_d}
=\frac{1}{n}  P(U_1)\ldots P(U_d)\,.$$
\end{thm}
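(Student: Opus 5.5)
Since $\ul P$ is multiplicative, $\log \ul P$ is primitive, and $p_{n\vec m}$ is built from the $q$'s by the logarithm formula. So I would evaluate $\ul P$ on the equivariant multiple points $q_{n\vec m}$ first, assemble a generating function, and take its logarithm. Concretely, with the generating series $Q=\sum_{n,\vec m} q_{n\vec m}T^nU^{\vec m}$, multiplicativity of $\ul P$ gives
$$\sum_{n,\vec m}\langle \ul P,p_{n\vec m}\rangle T^nU^{\vec m}=\log\sum_{n,\vec m}\langle\ul P,q_{n\vec m}\rangle T^nU^{\vec m}\,.$$
The target formula says the right hand side equals $\sum_n \frac1n T^n(U_1\cdots U_d)^{n-1}P(U_1)\cdots P(U_d)$. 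This is what one gets if
$$\sum_{n,\vec m}\langle\ul P,q_{n\vec m}\rangle T^nU^{\vec m}=(1-TU_1\cdots U_d)^{-P(U_1)\cdots P(U_d)/(U_1\cdots U_d)}\,,$$
read formally, the exponent being a Laurent-type expression. This closely parallels Lemma~\ref{minus}.

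The first step is to compute $\langle \ul P,q_{n\vec m}\rangle$ directly. The relevant family is $E=\bigoplus_i\O(e_i)\to\pp=\prod\pp^{m_i}$. The horizontal class is $\frac1{n!}\Delta_\ast[\pp]$, where $\Delta$ is the $n$-fold zero section, landing in $Z_nE=E^n_\pp/\Sigma_n$. Pairing $\ul P=i_!P(T_{IZ})$ with $\Delta_\ast[\pp]$ gives $\langle P(T_{IZ}),i^!\Delta_\ast[\pp]\rangle$. The fibre of $i:IZ_nE\to Z_nE$ over the zero section is $\coprod_\sigma \pp\times B(\text{centralizer})$, summed over conjugacy classes of $\sigma\in\Sigma_n$. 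Along the component for $\sigma$, the excess bundle is $E^n/(E^n)^\sigma=E\otimes(\text{nontrivial part of }\qq^n\text{ under }\sigma)$, with twisted equivariant structure. Its Euler class is what appears as $U$-powers.

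The next step is to do the orbifold bookkeeping. For a $\sigma$ with cycle type $\mu$, the fixed locus is $E^{\ell(\mu)}$. The tangent bundle of the inertia component along the zero section is $E^{\ell(\mu)}$, so it contributes $P(U)^{\ell(\mu)}$, where I write $P(U)=\prod_iP(U_i)$ in terms of the Chern roots $U_i=c_1(\O(e_i))$. The excess normal bundle is $E^{n-\ell(\mu)}$, with eigenvalues the nontrivial roots of unity on each cycle. Its top Chern class, in rational cohomology, is $\prod_i U_i^{\,n-\ell(\mu)}$. Summing over $\sigma$ with weight $1/n!$ (automorphisms) and reading off the coefficient of $[\pp]$ should give
$$\sum_{\vec m}\langle\ul P,q_{n\vec m}\rangle U^{\vec m}=\frac1{n!}\sum_{\sigma\in\Sigma_n}(U_1\cdots U_d)^{n-\ell(\sigma)}P(U)^{\ell(\sigma)}\,.$$
The standard cycle index identity $\sum_n\frac{T^n}{n!}\sum_\sigma x^{\ell(\sigma)}y^{n-\ell(\sigma)}=(1-yT)^{-x/y}$ then yields the displayed closed form, and taking the logarithm gives $\sum_n \frac{T^n}{n}y^{n-1}x$ with $y=U_1\cdots U_d$ and $x=P(U)$. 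This is exactly the theorem.

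The main obstacle is the middle step: justifying that $i^!\Delta_\ast$ on each inertia component really equals the excess Euler class. This requires an excess intersection formula for the pullback of a section class along the proper lci map $i$, valid for stacks (the Gysin maps of \cite{khan} are compatible with such formulas). One must also check that the nontrivial-eigenvalue part contributes precisely $U_i^{\,n-\ell}$ rationally, with no extra scalar from the roots of unity. I would first verify all of this in the case $d=1$, $n=2$, comparing against Lemma~\ref{minus} for consistency.
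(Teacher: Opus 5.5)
The paper states Theorem~\ref{iner} without proof, as it does Lemma~\ref{minus}, so there is no argument of the paper's to compare against. Your proposal is correct and supplies a proof. It also uses exactly the techniques the paper uses elsewhere:
\begin{itemize}
\item the computation of $\langle c^k,q_{n\vec m}\rangle$ via the cover $E^{\oplus n}\to E^{\oplus n}/\Sigma_n$;
\item pairing the logarithm of a generating series with a multiplicative theory, as in the proposition on curves.
\end{itemize}
The following steps are all right:
\begin{itemize}
\item your closed form $\sum_{\vec m}\langle\ul P,q_{n\vec m}\rangle U^{\vec m}=\frac{1}{n!}\sum_{\sigma\in\Sigma_n}(U_1\cdots U_d)^{n-\ell(\sigma)}\big(\prod_i P(U_i)\big)^{\ell(\sigma)}$;
\item the identity $\sum_n\frac{T^n}{n!}\sum_\sigma x^{\ell(\sigma)}y^{n-\ell(\sigma)}=(1-yT)^{-x/y}=\exp\sum_n\frac{T^n}{n}xy^{n-1}$;
\item the passage from the $q$'s to the $p$'s.
\end{itemize}
The last step does not use primitivity of $\log\ul P$. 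It only needs that $\langle\ul P,-\rangle$ is a ring homomorphism with $\langle\ul P,1\rangle=1$, together with $q_{0,\vec m}=0$ for $\vec m\neq0$.

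There are two small bookkeeping points.

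First, the fibre of $i$ over $\Delta\colon\pp\to Z_nE$ is $\coprod_{\sigma\in\Sigma_n}\pp$, with one copy per element of $\Sigma_n$. The space $\coprod_{[\sigma]}\pp\times BC(\sigma)$ you describe is instead the preimage of the zero-section substack $j\colon\pp\times B\Sigma_n\to Z_nE$, and $\frac1{n!}\Delta_\ast[\pp]=j_\ast[\pp\times B\Sigma_n]$. Either way the factor $1/n!$ must enter exactly once. It enters either as the normalization of $q_{n\vec m}$ together with a sum over all $\sigma$, or as $\sum_{[\sigma]}1/|C(\sigma)|$. Your displayed formula does this correctly. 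However, the phrase ``weight $1/n!$ (automorphisms)'' placed next to the normalization $\frac1{n!}\Delta_\ast[\pp]$ reads like a double count.

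Second, both obstacles you flag disappear if you write $q_{n\vec m}=\frac1{n!}\rho_\ast 0_\ast[\pp]$, with $\rho\colon E^n_\pp\to Z_nE$ finite \'etale, as the paper does in the $c^k$ example. Then:
\begin{itemize}
\item $\rho^\ast\ul P=\sum_{\sigma\in\Sigma_n}(i_\sigma)_!P(T_{(E^n)^\sigma/\pp})$, where $i_\sigma\colon(E^n)^\sigma=E\otimes(\cc^n)^\sigma\hookrightarrow E^n$ is a linear subbundle over the scheme $\pp$;
\item $0^\ast(i_\sigma)_!\beta=e\big(E\otimes\cc^n/(\cc^n)^\sigma\big)\cdot 0_\sigma^\ast\beta$ is the classical excess formula;
\item the excess bundle is $\cong E^{\oplus(n-\ell(\sigma))}$ with no group action, so its Euler class is exactly $(U_1\cdots U_d)^{n-\ell(\sigma)}$.
\end{itemize}
On the stack one reaches the same conclusion because $H^{>0}(BC(\sigma);\qq)=0$, so no root-of-unity scalar can appear. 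Note also that you are, correctly, using the relative tangent bundle $T_{IZX/S}$ restricted to the zero section, $E^{\oplus\ell(\sigma)}$. That is what naturality of $\ul P$ under pullback of families forces.
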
 

\begin{thm}\label{sure}
We have
$$\sum_{n\geq0} [Z_nX]T^n=\exp\sum_{|\vec x|=d}\langle \gamma^{\vec x}, [X]\rangle\sum_{n>0} p_{n,\vec x+n-1} T^n\,.$$
This gives the following universal formula for $\langle e,[Z_nX]\rangle$, for any multiplicative separated theory $e$: 
$$\sum_{n\geq0} \langle e,[Z_nX]\rangle T^n=\exp\sum_{|\vec x|=d}\langle \gamma^{\vec x}
, [X]\rangle\sum_{n>0} \langle e,p_{n,\vec x+n-1}\rangle T^n\,.$$
It reduces the computation of all $\langle e,[Z_nX]\rangle$ to the computation of a few Chern numbers of $X$, and evaluating e on the equivariant multiple points $\langle e,q_{n\vec m}\rangle$. 
\end{thm}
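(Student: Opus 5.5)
Since $\sum_n [Z_nX]T^n$ is group-like in $H_\ast^\sep$ (coproduct of vertical classes, $\Delta[ZX]=[ZX]\otimes[ZX]$), its logarithm is primitive. By Theorem~\ref{ttsca} it lies in the tautological algebra, so by the corollary on the $p_{n,\vec m}$ it is a $\qq$-linear combination of the primitives $p_{n,\vec m}$ with $\vec m\geq n-1$ and $\vec m$ sorted. I therefore write
$$\log\sum_n[Z_nX]T^n=\sum_{n>0}\sum_{\vec m\geq n-1}a_{n\vec m}\,p_{n\vec m}T^n,$$
and the task is to determine the coefficients $a_{n\vec m}$.

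First I restrict by total degree. Each $[Z_nX]$ has homological degree $2nd$, and the logarithm is built from products of these, so every term has total degree zero. Since $p_{n\vec m}$ is homogeneous of homological degree $2|\vec m|$, only terms with $|\vec m|=nd$ survive. Writing $\vec m=\vec x+n-1$, this condition becomes $|\vec x|=d$. So the logarithm equals $\sum_{|\vec x|=d}\sum_{n>0}a_{n,\vec x}\,p_{n,\vec x+n-1}T^n$, where I now index $\vec x$ without sorting, distributing each coefficient symmetrically over permutations.

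Next I pair with the inertial theories $\ul P$ for the generic multiplicative $P(\gamma)=1+\sum_j t_j\gamma^j$. Because $\ul P$ is multiplicative, pairing commutes with $\exp$ and $\log$. Lemma~\ref{minus} gives the left side as $-\langle P(T_X),[X]\rangle\log(1-T)=\sum_n \frac1n\langle P(T_X),[X]\rangle T^n$. Theorem~\ref{iner} gives $\langle\ul P,p_{n,\vec x+n-1}\rangle=\frac1n t_{x_1}\cdots t_{x_d}$. Comparing the coefficients of $T^n$ yields
$$\sum_{|\vec x|=d}a_{n,\vec x}\,t_{x_1}\cdots t_{x_d}=\langle P(T_X),[X]\rangle=\sum_{|\vec x|=d}t_{x_1}\cdots t_{x_d}\langle\gamma^{\vec x},[X]\rangle.$$
Here I expand $P(T_X)=\prod_i P(\gamma_i)$ in the Chern roots, where $\gamma^{\vec x}$ denotes the monomial symmetric function, symmetrized appropriately to match the indexing. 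Since the $t$-monomials separate orbits of $\Sigma_d$, this gives $a_{n,\vec x}=\langle\gamma^{\vec x},[X]\rangle$, independent of $n$, once the symmetrization convention is fixed consistently.

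Exponentiating gives the first formula. The second formula follows by pairing with an arbitrary multiplicative separated $e$, using again that multiplicativity turns $\exp$ of primitives into $\exp$ of pairings.

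The main obstacle I expect is the bookkeeping around $\Sigma_d$-symmetry. The coefficients are only defined on sorted $\vec m$, while the $t$-expansion runs over all $\vec x$, so the convention for $\langle\gamma^{\vec x},[X]\rangle$ must match so that no factor of orbit size appears. A second concern is the claim that the $p$'s with $\vec m\geq n-1$ span all primitives. This relies on the algebraic-independence corollary and Remark~\ref{redm}, so I must make sure the total-degree projection is compatible with that basis.
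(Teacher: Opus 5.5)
Your proposal is correct and follows the paper's proof step for step. It uses group-likeness of $\sum_n[Z_nX]T^n$, expands the logarithm in the primitives $p_{n,\vec m}$, restricts to total degree zero to get $|\vec x|=d$, and then pairs with the generic multiplicative inertial theory $\ul P$ via Lemma~\ref{minus} and Theorem~\ref{iner}. Your finish by comparing coefficients of the $t$-monomials is the right way to conclude: these monomials separate $\Sigma_d$-orbits because $t_0=1$, and reading $\langle\gamma^{\vec x},[X]\rangle$ as orbit sums gives $\langle m_\lambda(T_X),[X]\rangle$. By contrast, the paper's ``set the $t_{x_i}$ equal to $1$'' would only recover the sum of these coefficient identities.
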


\begin{pf}
As $\sum_n [Z_nX]T^n$ is group-like, its logarithm is a linear combination of primitives $p_{n\vec m}$
\begin{align*}
\log\sum_{n\geq0} [Z_nX]T^n=&
\sum_{n>0} T^n\sum_{|\vec m|=dn} \tilde a_{n\vec m} p_{n\vec m}\\
=&\sum_{n>0} T^n\sum_{|\vec x|=d} a_{n\vec x} p_{n,\vec x+n-1}
\end{align*}
Primitives $p_{n\vec m}$ with $|\vec m|\not=dn$ do not occur, because the left hand side is homogeneous of degree zero with respect to total degree. Also, we can restrict to $\vec m\geq n+1$, so we introduce $\vec x=\vec m-n+1$, so that $\vec x\geq0$. 

We will try to determine as many of the $a_{n\vec x}$ as possible, by pairing this equation with a multiplicative inertial theory 
$\ul P$. 
$$\log\sum_n \langle \ul P,[Z_nX]\rangle T^n=\sum_n T^n\sum_{|\vec x|=d} a_{n\vec x} \langle \ul P,p_{n,\vec x+n-1}\rangle\,.$$
By Lemma~\ref{minus} and Theorem~\ref{iner}, this gives, 
$$\langle P(T_X),[X]\rangle\sum_n\frac{1}{n}T^n=\sum_n T^n\sum_{|\vec x|=d}a_{n\vec x}\frac{1}{n}\Big(P(U_1)\ldots P(U_d)\Big)_{\text{degree $\vec x$-coefficient}}\,.$$
Or, for 
for every $n>0$ the equation
\begin{equation}\label{coeff}
  \sum_{|\vec x|=d}a_{n\vec x}\Big(P(U_1)\ldots P(U_d)\Big)_{\text{degree $\vec x$-coefficient}}=\langle P(T_X),[X]\rangle\,.
 \end{equation}
The generic multiplicative symmetric function is given by
$$P(U)=1+\sum_{j=1}^\infty t_j U^j\,,$$
for formal variables $t_1,t_2,\ldots$. Using this symmetric function we get, for the left hand side of (\ref{coeff}):
$$\sum_{|\vec x|=d}a_{n\vec x}\Big(\prod_{i=1}^d (1+t_1U_i+t_2U_i^2+\ldots)\Big)_{\text{degree $\vec x$-coefficient}}=\sum_{|\vec x|=d}a_{n\vec x} t_{x_1}\ldots t_{x_d}\,,$$
and for the right hand side
\begin{align*}
\langle \prod_{i=1}^d P(\gamma_i(T_X)),[X]\rangle=&
\langle \prod_{i=1}^d (1+t_1\gamma_i(T_X)+t_2\gamma_i^2(T_X)\ldots),[X]\rangle\\
=&\sum_{|\vec x|=d}t_{x_1}\ldots t_{x_d}\langle \prod_{i=1}^d \gamma_i(T_X)^{x_i}
,[X]\rangle\,.
\end{align*}
Equating the two sides, we see that
$$\sum_{|\vec x|=d}a_{n\vec x} t_{x_1}\ldots t_{x_d}=\sum_{|\vec x|=d}t_{x_1}\ldots t_{x_d}\langle \prod_{i=1}^d \gamma_i(T_X)^{x_i}
,[X]\rangle\,.$$
If we now set the $t_{x_i}$ equal to 1, we obtain
$$\sum_{|\vec x|=d} a_{n\vec x}=\sum_{|\vec x|=d}\langle \gamma^{\vec x},[X]\rangle\,.$$
This finishes the proof.
\end{pf}

\begin{thm} 
For any multiplicative separated theory $e$, the expression
$$\gamma\log \sum_{ n,\vec m}\langle e,q_{n\vec m}\rangle\gamma^{\vec m}\Big(\frac{T}{\gamma}\Big)^n$$
(where $\gamma_1,\ldots,\gamma_d$ are formal Chern roots, and $\gamma=\gamma_1\ldots \gamma_d$) has no poles, i.e., it is an element of $\qq[\gamma_1,\ldots,\gamma_d][[T]]$. Therefore, the following makes sense:\comment{I think we cannot restrict to $\vec x\geq0$ as the formula is written, because when rearranging, taking into account the relations, the formula will change  ALSO: it seems that when using the $p$s, we can restrict to $\vec x\geq0$, because when pairing  the primitives with inertial multiplicative theories we get 0.}
$$\log\sum_{n\geq0} \langle e,[Z_nX]\rangle T^n=\int_{[X]}\gamma\log \sum_{ n,\vec m}\langle e,q_{n\vec m}\rangle\gamma^{\vec m}\Big(\frac{T}{\gamma}\Big)^n\,.$$
It expresses the $\langle e,[Z_nX]\rangle$ directly  in terms of the Chern numbers of $X$ and the equivariant enumerative invariants $\langle e,q_{n\vec m}\rangle$. 

This formula shows how to convert the generating function for the equivariant invariants $\langle e,q_{n\vec m}\rangle$ into the generating function for the proper invariants $\langle e,[Z_nX]\rangle$.
\end{thm}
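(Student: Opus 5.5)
The plan is to lift the identity to $H_\ast^{\sep,\taut}$ with formal Chern-root variables, use group-likeness to pass to the primitives $p_{n\vec m}$, and then read off both assertions from Theorem~\ref{sure}. Let $\gamma_1,\ldots,\gamma_d$ be formal variables and put
$$G=\sum_{n,\vec m} q_{n\vec m}\,\gamma^{\vec m}\Big(\frac{T}{\gamma}\Big)^n\,,$$
a formal series with coefficients in $H_\ast^{\sep,\taut}$. Here $\gamma^{\vec m}=\gamma_1^{m_1}\cdots\gamma_d^{m_d}$. The coproduct formula $\Delta(q_{\ul N})=\sum_{\ul N=\ul P+\ul Q}q_{\ul P}\otimes q_{\ul Q}$ is additive in all rows, and the monomial weight $\gamma^{\vec m-n}T^n$ is multiplicative in $(n,\vec m)$. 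Hence $G$ is group-like, refining the corollary that $\sum_{\ul M}q_{\ul M}$ is group-like. By the definition of the $p_{n\vec m}$ as the logarithmic coefficients of the $q_{n\vec m}$, we get
$$\log G=\sum_{n>0,\vec m}p_{n\vec m}\,\gamma^{\vec m-n}T^n\,.$$
Since $e$ is multiplicative, pairing commutes with $\exp$ and $\log$, exactly as in the proof of Corollary~\ref{mst}. Therefore
$$\gamma\log\sum_{n,\vec m}\langle e,q_{n\vec m}\rangle\gamma^{\vec m}\Big(\frac{T}{\gamma}\Big)^n=\sum_{n,\vec m}\langle e,p_{n\vec m}\rangle\,\gamma^{\vec m-n+1}T^n\,.$$

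The pole-freeness claim then amounts to showing that $\langle e,p_{n\vec m}\rangle=0$ unless $\vec m\geq n-1$, and I would prove the stronger statement $p_{n\vec m}=0$ in $H_\ast^{\sep}$ in that case. Suppose some $m_i<n-1$. As in Remark~\ref{redm}, the $n$-fold zero section of $E\to\pp^{\vec m}$ deforms to $n$ sections with no common value, because the $i$-th summand $\O(1)$ on $\pp^{m_i}$ has enough sections to separate $n$ points. I would then run the double point degeneration induction of Theorem~\ref{ttsca} on this family. Every term it produces lives on a family whose $n$ points split off into disjoint pieces of strictly lower multiplicity. I would show that each such term is a product of classes of lower degree, i.e.\ a decomposable element. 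One would also need to check that the sum of all these terms is group-like-compatible, which is the kind of bookkeeping done in Remark~\ref{3:15}. Then $q_{n\vec m}$ lies in the subalgebra generated by lower-degree $q$'s. Combined with $\log$, this shows $p_{n\vec m}$ is both primitive and decomposable. In a connected graded commutative cocommutative Hopf algebra over $\qq$, written as $\Sym V$ with $V$ the primitives, a primitive decomposable element is $0$.

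Granting this, the remaining sum runs over $\vec x=\vec m-n+1\geq0$, so it lies in $\qq[\gamma_1,\ldots,\gamma_d][[T]]$. The integral $\int_{[X]}$, with $\gamma_i$ substituted by the Chern roots of $T_X$, picks out the terms with $|\vec x|=d$. This gives
$$\sum_{|\vec x|=d}\langle\gamma^{\vec x},[X]\rangle\sum_{n>0}\langle e,p_{n,\vec x+n-1}\rangle T^n\,,$$
which by Theorem~\ref{sure} equals $\log\sum_n\langle e,[Z_nX]\rangle T^n$.

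The main obstacle is the vanishing of $p_{n\vec m}$ for $\vec m\not\geq n-1$. The delicate point is verifying that the double point degenerations of the deformed sections genuinely yield decomposable classes, i.e.\ disjoint-union families up to Pardon relations. If this fails, I would fall back on the weaker statement actually needed: $\langle e,p_{n\vec m}\rangle=0$ for every multiplicative separated $e$. I would try to obtain it by directly evaluating $e$ on the degenerate pieces.
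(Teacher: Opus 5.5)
Your proposal is correct, and it shares the paper's skeleton. The series $\sum q_{n\vec m}\gamma^{\vec m}(T/\gamma)^n$ is group-like, its logarithm is $\sum p_{n\vec m}\gamma^{\vec m-n}T^n$, and pairing with a multiplicative $e$ commutes with $\log$. The integral formula then follows from Theorem~\ref{sure}.

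The genuine difference is in the pole-freeness step. The paper says only that terms with negative entries in $\vec x$ ``can be expressed in terms of those with $\vec x\geq0$'' through the relations among the $q_{n\vec m}$ (Remark~\ref{redm}), and hence removed. Read literally, this is not enough. Rewriting the coefficient $\langle e,p_{n,\vec x+n-1}\rangle$ leaves the monomial $\gamma^{\vec x}$, with its negative exponent, in place. What is actually needed is $\langle e,p_{n\vec m}\rangle=0$ for $\vec m\not\geq n-1$.

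You isolate exactly this and prove the stronger statement $p_{n\vec m}=0$ in $H_\ast^\sep$. Once the $n$ sections never all coincide, every terminal piece of the double point induction of Theorem~\ref{ttsca} is a disjoint-union family. These pieces are the $\pp(N_{W/S}+1)$-pieces, which have $k\geq2$ values, and the final $n\geq2$ disjoint normal bundles. Hence $q_{n\vec m}\in I^2$ for the augmentation ideal $I$, and $p_{n\vec m}\in V\cap I^2=0$ in $\Sym V$. This uses no geometry beyond what Remark~\ref{redm} already invokes, namely that the $n$-fold point is never regenerated. It simply draws the sharper algebraic conclusion that the paper's ``can be removed'' tacitly requires.

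In a write-up, make three points explicit.
\begin{itemize}
\item The initial deformation exists because agreement of all $n$ sections in the $i$-th coordinate is cut out by $n-1>m_i$ general hyperplanes of $\pp^{m_i}$. This is more precise than ``enough sections to separate $n$ points''.
\item A disjoint-union family $X_1\amalg X_2\to B$ is the pullback along the diagonal $B\to B\times B$. By K\"unneth and Proposition~\ref{prodh}, its class is $\sum_\ell\eta(D_1/X_1/B,\alpha'_\ell)\cdot\eta(D_2/X_2/B,\alpha''_\ell)$, with both factors of positive cycle degree.
\item The $\pp(N_{W/S}+1)$-pieces split into $k$ components only after the finite \'etale base change of Lemma~\ref{etale:pullback} trivializing $D^{(k)}\to S^{(k)}$.
\end{itemize}
The ``group-like bookkeeping'' and the fallback plan are unnecessary.

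There is also an alternative route to the same vanishing. Granting that the $p_{n\vec m}$ with $\vec m\geq n-1$ span the primitives, write $p_{n\vec m}$ with $\vec m\not\geq n-1$ as a combination of them. Then pair with the generic inertial theory of Theorem~\ref{iner}. The left side pairs to zero, and the distinct $t$-monomials on the right force all coefficients to vanish.
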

\begin{pf}
The expression under consideration is equal to 
$$\sum_{n,\vec x}\langle e,p_{n,\vec x+n-1} \rangle \gamma^{\vec x}T^n\,.$$
This follows directly from the definitional relationship between the $p_{n\vec m}$ and the $q_{n\vec m}$. This expression potentially extends over $\vec x$ with negative entries, but they can all be expressed in terms of those with $\vec x\geq0$. As $e$ is an enumerative theory, all relations among the $q_{n\vec m}$ also hold among the $\langle e,q_{n\vec m}\rangle$.  Therefore, any terms with negative entries in an $\vec x$ can be removed. 
\end{pf}
\begin{rmk}
In fact, we have the formula even before evaluating against a multiplicative theory:
$$\log\sum_{n\geq0} [Z_nX] T^n=\int_{[X]}\gamma\log \sum_{ n,\vec m}q_{n\vec m}\gamma^{\vec m}\Big(\frac{T}{\gamma}\Big)^n\,.$$
It is a fact that the expression under the integral sign is a power series in $H_\ast[\gamma_1,\ldots,\gamma_d][[T]]$, but we don't know how to write it explicitly as such. So this formula is a bit implicit.  Once we evaluate against a theory $e$, we can usually see the expansion in non-negative powers of $\gamma$, which allows us to get explicit formulas. 
\end{rmk}

\subsection{An example}

Let us continue Example~\ref{hilbert}. Consider the threefold $\aaa^3$, and its Hilbert scheme $\Hilb^n\aaa^3$. The torus $T=\Gm^3$ acts on $\aaa^3$ in the standard way.  We get an induced action on $\Hilb^n\aaa^3$. The virtual fundamental class of $\Hilb^n\aaa^3$ descends to an equivariant virtual class $[\Hilb^n\aaa^3/T\to BT]\in H^0(\Hilb^n\aaa^3/T\to BT)$.  
Recall that $H^\ast(BT)=R(T)=\qq[\gamma_1,\gamma_2,\gamma_3]=\qq[\gamma]$ is the symmetric algebra on the universal Chern roots, and that all bivariant groups $H^\ast[X/T\to BT]$ are $\qq[\gamma]$-modules.  Let us denote by $\qq(\gamma)$ the quotient field of the polynomial ring $\qq[\gamma]$. 

Denote by $F_n$ the fixed locus (the set of fixed points) of the action of $T$ on $\Hilb^n\aaa^3$. Let $i:F_n\to \Hilb^n\aaa^3$ be the inclusion morphism.  By the equivariant localization theorem,  we have 
$$[\Hilb^n\aaa^3/T\to BT]= i_\ast \frac{1}{e(N_{F/H})}\,,$$
once we tensor $H^\ast(\Hilb^n\aaa^3\to BT)$ (over $\qq[\gamma]$) with $\qq(\gamma)$. Here $e(N_{F/H})$ is the Euler class of the  virtual normal bundle to the inclusion $i:F_n\to \Hilb^n\aaa^3$. This Euler class is contained in $H^\ast (F_n\times BT)=\bigoplus_{x\in F_n} \qq[\gamma]$. Once tensored with $\qq(\gamma)$, it is an element of $\bigoplus_{x\in F_n}\qq(\gamma)$, and is an invertible element of degree $0$ there. 

Let $E=\O(1,0,0)+\O(0,1,0)+\O(0,0,1)$ be the tautological rank 3 bundle on $\pp^{\vec m}=\pp^{m_1}\times\pp^{m_2}\times\pp^{m_3}$. There is a canonical morphism  $\pp^{\vec m}\to BT$, such that $E$ is the pullback of the universal split rank 3 bundle. The relative Hilbert scheme of $E$ over $\pp^{\vec m}$ fits into a cartesian diagram
$$\begin{tikzcd}
\Hilb^n E\ar[r,"f"]\ar[d]& \ol Z_n E\ar[d]\ar[r] &\pp^{\vec m}\ar[d,"\pi"]\ar[l,bend right,"0"']\\
\Hilb^n\aaa^3/T\ar[r,"f"] & \ol Z_n \aaa^3/T \ar[r] &BT\ar[l,bend right,"0"']
\end{tikzcd}$$
Chasing through this diagram, we see that
\begin{align*}
\langle f_\ast e^\vir(\Hilb),q_{n,\vec m}\rangle=&
\langle f_\ast [\Hilb^n E\to\pp^{\vec m}],0_\ast[\pp^{\vec m}]\rangle\\
=&\langle f_\ast [\Hilb^n \aaa^3\to BT],0_\ast\pi_\ast[\pp^{\vec m}]\rangle\\
\end{align*}
Using notation as in the diagram
$$\begin{tikzcd}
F_n/T\ar[r,"j"]\ar[d,"i"] & Z_n\aaa^3/T\ar[d,"\psi"]\\
\Hilb^n\aaa^3/T\ar[r,"f"] & \ol Z_n\aaa^3/T
\end{tikzcd}$$
we continue the calculation:
\begin{align*}
=&\langle f_\ast i_\ast\frac{1}{e(N_{F/H})},0_\ast\pi_\ast[\pp^{\vec m}]\rangle\\
=&\langle \psi_\ast j_\ast\frac{1}{e(N_{F/H})},0_\ast\pi_\ast[\pp^{\vec m}]\rangle\\
=&\langle  j_\ast\frac{1}{e(N_{F/H})},0_\ast\pi_\ast[\pp^{\vec m}]\rangle\\
=&\langle \frac{e(T_{Z_n\aaa^3})}{e(N_{F/H})},\pi_\ast[\pp^{\vec m}]\rangle
\end{align*}
So evaluating the Hilbert enumerative theory on the tautological homology class $q_{n\vec m}$ amounts to evaluating the equivariant Donaldson-Thomas invariant $\frac{1}{e(N_{F/H})}\in \qq(\gamma)$, multiplied by $e(T_{Z_n\aaa^3})$ to raise it into degree $6n$, on the homology class $\pi_\ast[\pp^{\vec m}]\in H_{6n}(BT)$. 

In \cite{MNOPII} it is proved that
$$\sum_{n=0}^\infty \frac{T^n}{e(N_{F_n/H})}= {M(-T)} ^{-\frac{(\gamma_1+\gamma_2)(\gamma_2+\gamma_3)(\gamma_3+\gamma_1)}{\gamma_1\gamma_2\gamma_3}}$$
Substituting $T$ with $\gamma_1\gamma_2\gamma_3T$, we deduce
$$\sum_{n=0}^\infty \frac{(\gamma_1\gamma_2\gamma_3T)^n}{e(N_{F_n/H^n})}={M(-\gamma_1\gamma_2\gamma_3 T)}^{-\frac{(\gamma_1+\gamma_2)(\gamma_2+\gamma_3)(\gamma_3+\gamma_1)}{\gamma_1\gamma_2\gamma_3}}$$
We have $e(T_{Z_n\aaa^3})=(\gamma_1\gamma_2\gamma_3)^n$, and so 
\begin{align*}
\sum_{n,\vec m}\langle f_\ast e^\vir(\Hilb),q_{n,\vec m}\rangle T^n U^{\vec m}
=&\sum_{n,\vec m}\langle \frac{e(T_{Z_n\aaa^3})}{e(N_{F/H})},\pi_\ast[\pp^{\vec m}]\rangle  T^n U^{\vec m}\\
=&\sum_{\vec m} \langle 
 {M(-\gamma_1\gamma_2\gamma_3 T)}^{-\frac{(\gamma_1+\gamma_2)(\gamma_2+\gamma_3)(\gamma_3+\gamma_1)}{\gamma_1\gamma_2\gamma_3}}  ,\pi_\ast[\pp^{\vec m}]\rangle  U^{\vec m}\\
 =& 
 {M(-U_1U_2U_3 T)}^{-\frac{(U_1+U_2)(U_2+U_3)(U_3+U_1)}{U_1U_2U_3}} 
\end{align*}
This gives also
$$\sum_{n,\vec m}\langle f_\ast e^\vir(\Hilb),p_{n,\vec m}\rangle T^n U^{\vec m}=
-\frac{(U_1+U_2)(U_2+U_3)(U_3+U_1)}{U_1U_2U_3} \log M(-U_1U_2U_3T)$$
We rewrite this as:
$$\sum_{n,\vec x}\langle e,p_{n,\vec x+n-1}\rangle T^n U^{\vec x}=
-(U_1+U_2)(U_2+U_3)(U_3+U_1) \log M(-T)$$
Now we substitute $U_i$ with $\gamma_i$ and pair with $[X]$ 
$$\sum_{n,\vec x}\langle e,p_{n,\vec x+n-1}\rangle  \langle \gamma^{\vec x},[X]\rangle T^n=
-\langle (\gamma_1+\gamma_2)(\gamma_2+\gamma_3)(\gamma_3+\gamma_1),[X]\rangle \log M(-T)$$
Now applying Theorem~\ref{sure}, we get
$$\sum_{n\geq0}\langle e,[Z_nX]\rangle T^n=M(-T)^{\langle c_3- c_1c_2,[X]\rangle}$$
This agrees with the formula proved for example in \cite{cobordism}. In the Calabi-Yau case ($c_1=0$) it reduces to $M(-T)^{\chi (X)}$.



\end{document}